\newtheorem{theorem}{Theorem}[section]
\theoremstyle{plain}
\newtheorem{lemma}[theorem]{Lemma}
\newtheorem{proposition}[theorem]{Proposition}
\newtheorem{corollary}[theorem]{Corollary}
\theoremstyle{definition}
\newtheorem{definition}{Definition}
\title{\vspace{-3.0ex}Ramsey numbers $R(K_3, G)$ for graphs of order 10}
\author{Gunnar Brinkmann\\
\small Department of Applied Mathematics \& Computer Science\\[-0.8ex]
\small Ghent University\\[-0.8ex] 
\small Krijgslaan 281-S9,\\[-0.8ex] 
\small 9000 Ghent, Belgium\\
\small\tt gunnar.brinkmann@ugent.be\\
\and
Jan Goedgebeur\\
\small Department of Applied Mathematics \& Computer Science\\[-0.8ex]
\small Ghent University\\[-0.8ex] 
\small Krijgslaan 281-S9,\\[-0.8ex] 
\small 9000 Ghent, Belgium\\
\small\tt jan.goedgebeur@ugent.be\\
\and
Jan-Christoph Schlage-Puchta\\
\small Department of Mathematics\\[-0.8ex]
\small Ghent University\\[-0.8ex] 
\small Krijgslaan 281-S22,\\[-0.8ex] 
\small 9000 Ghent, Belgium\\
\small\tt jcsp@cage.ugent.be
}
\date{\dateline{XX}{XX}\\
\small Mathematics Subject Classifications: 05C30, 05C55, 05C85, 68R10, 90-04}
\begin{document}

\maketitle

\begin{abstract}
In this article we give the generalized triangle Ramsey numbers
$R(K_3, G)$ of 12~005~158 of the 12~005~168 graphs of order
10. There are 10 graphs remaining for which we could not determine
the Ramsey number. Most likely these graphs need approaches focusing
on each individual graph in order to determine their triangle Ramsey number.
The results were obtained
by combining new computational and theoretical results.  We also
describe an optimized algorithm for the generation of all 
maximal triangle-free graphs and triangle Ramsey
graphs.  All Ramsey numbers up to 30 were computed by our
implementation of this algorithm. We also prove some theoretical
results that are applied to determine several triangle Ramsey
numbers larger than 30.
As not only the number of graphs is increasing very fast, but also
the difficulty to determine Ramsey numbers, we consider it very likely
that the table of all triangle Ramsey numbers for graphs of order
10 is the last complete table that can possibly be determined for a 
very long time.

  \bigskip\noindent \textbf{Keywords:} Ramsey number; triangle-free graph; generation
\end{abstract}


\section{Introduction}
The Ramsey number $R(G,H)$ of two graphs $G$ and $H$ is the smallest
integer $r$ such that every assignment of two colours (e.g.\ red and blue)
to the edges of $K_r$ gives $G$ as a red subgraph or $H$ as a blue
subgraph. Or equivalently $R(G,H)$ is the smallest integer $r$ such
that every graph $F$ with at least $r$ vertices contains $G$ as a
subgraph, or its complement $F^c$ contains $H$ as a subgraph. A graph
$F$ is a \textit{Ramsey graph} for a pair of graphs $(G,H)$ if $F$
does not contain $G$ as a subgraph and its complement $F^c$ does not
contain $H$ as a subgraph.

The existence of $R(G,H)$ follows from Ramsey's
theorem~\cite{ramsey_30} from 1930. The classical Ramsey numbers
(where both $G$ and $H$ are complete graphs) are known to be extremely
difficult to determine. It is even difficult to obtain narrow bounds when $H$ or $G$
have a large order. Therefore only few exact results are known. The
last exact result was obtained by McKay and
Radziszowski~\cite{mckay_radz_95} in 1995 when they proved that
$R(K_4, K_5) = 25$.

For a good overview of the results and bounds of Ramsey numbers which
are currently known, we refer the reader to Radziszowski's 
dynamic survey~\cite{staszek_ds}.

In this article, we focus on triangle Ramsey numbers, that is Ramsey
numbers $R(G,H)$ where $G=K_3$. When we speak about Ramsey numbers 
or Ramsey graphs in
the remainder of this article, we always mean triangle Ramsey numbers
resp. triangle Ramsey graphs.

Already in 1980 all triangle Ramsey numbers for graphs of order 6 were
determined by Faudree, Rousseau and Schelp~\cite{faudree_80}. In 1993
the Ramsey numbers for connected graphs of order 7 were computed by
Jin Xia~\cite{xia_93}. Unfortunately some of his results turned out to
be incorrect. These were later corrected by
Brinkmann~\cite{brinkmann_98} who determined all triangle Ramsey
numbers for connected graphs of order 7 and 8 by using computer
programs. Independently, Schelten and Schiermeyer also determined
Ramsey numbers of graphs of order 7 by hand~\cite{schelten_97,
  schelten_98}.

In~\cite{brandt_98} all triangle Ramsey
numbers for connected graphs of order 9 and all Ramsey numbers $R(K_3,
G) \le 24$ for connected graphs of order 10 are given. For 2001 graphs
of order $10$ the Ramsey number remained open.

We used the same basic approach for the generation of maximal triangle-free
graphs (in short, \textit{mtf graphs}) that was already used in~\cite{brandt_98}, but some observations
about the structure of mtf graphs in~\cite{brandt_98} made it possible to 
improve the basic algorithm. It was observed that an astonishingly large ratio
of small mtf graphs had an automorphism group of size 2 caused by
two vertices with identical neighborhoods. 

We implemented the optimized algorithm for the
generation of mtf graphs and also added improved routines for the
restriction to Ramsey graphs. Using this program we independently
verified the results from~\cite{brandt_98} and determined all Ramsey
numbers $R(K_3, G)$ up to 30 for connected graphs $G$ of order 10. The
improved algorithm is described in
section~\ref{section_algorithm}. 
Next to these computational results,
we also proved some theoretical results that allowed to 
determine the Ramsey number of several graphs with Ramsey
number larger than 30. Combining these computational and theoretical results, only
10 graphs with 10 vertices are left for which the triangle Ramsey number is unknown.
We hope that other researchers will help to complete this list of triangle
Ramsey numbers which will then most likely be the last complete list of
triangle Ramsey numbers for a very long time.

As human intuition and insight is often based on examples, data about small
graphs -- like complete lists of 
Ramsey numbers -- can help to discover mathematical theorems,
suggest conjectures and give
insight into the structure of mathematical problems. An example is given in
\cite{alphadef}, where a large amount of computational data about $alpha$-labelings
gave insight into the structure of $alpha$-labelings of trees 
so that new theorems could be proven and some
unexpexted conjectures were suggested.
In order not to be mislead by too small examples, it is important to have as
much data as possible to develop a good intuition, as e.g.\ the following example shows: 
If $K_n-(m\cdot e)$ denotes the graph obtained by removing
$m$ disjoint edges from $K_n$, then 
the previously existing
lists of triangle Ramsey numbers for graphs of order at most $9$ have the property that  
for fixed $n$ the value $R(K_3,K_n-(m\cdot e))$ is the same for all $2\le m \le n/2$.
This may be considered as a hint that it could be true in general, but
the list in this article shows that for $n=10$ this equation does not hold.


\section{General results}
\label{section_general_results}

In this section we prove some general results on Ramsey numbers of the
form $R(K_3, G)$, where $G$ is close to a complete graph. Let $T_{s+}$ denote the tree obtained from $K_{1,s}$ by adding an extra vertex and connecting it
to a vertex with degree 1 in $K_{1,s}$. We write 
$\Delta_s$ for the graph obtained from $K_{1,s}$ by adding
one edge connecting two vertices with degree~1 in $K_{1,s}$, 
and $D_{s,t}$ for the
double star obtained from the disjoint union of $K_{1,s}$ and
$K_{1,t}$ by joining the vertices with degrees $s$ and $t$.
We denote the set of vertices of a graph $G$ by $V(G)$ and the set of edges by $E(G)$. We denote the neighbourhood of a vertex $v\in V(G)$ by $N(v)$.

The first result is a slight modification of Theorem~1 from \cite{brandt_98}. We give the
proof here, as we shall use the same argument repeatedly.

\begin{lemma}\label{lem:oldlemma}
Let $M$ be a triangle-free graph on $r$ vertices, such that $M^c$ contains
$K_{n-1}$, and let $s$ be an integer satisfying $1\leq s< n$ and
$(r-n)(s+1)>(n-1)(n-2)$. Then $M^c$ contains $K_n-K_{1,s}$.
\end{lemma}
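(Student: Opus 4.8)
The plan is to restate the conclusion as a clique-extension problem and then argue by contradiction through an edge count that is forced to exceed the number of ordered pairs inside the given clique.

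First I would observe that $M^c$ contains $K_n-K_{1,s}$ precisely when there exist a clique $A$ of size $n-1$ in $M^c$ together with a further vertex $v\notin A$ that is adjacent in $M^c$ to at least $n-1-s$ of the vertices of $A$ (the $n-1$ non-central vertices of $K_n-K_{1,s}$ form the clique and the central vertex provides the extension). Dualising, since $M$ is triangle-free the set $A$ is an independent set of size $n-1$ in $M$, and the extending vertex $v$ is one whose number of $M$-neighbours inside $A$ is at most $s$. So the whole statement reduces to finding an independent set of size $n-1$ in $M$ and a vertex that sends at most $s$ edges of $M$ into it. The hypothesis hands us one such independent set, namely the vertex set $A$ of the guaranteed $K_{n-1}\subseteq M^c$; to keep the neighbourhoods under control I would in fact take $A$ to be a \emph{largest} clique of $M^c$ (equivalently a maximum independent set of $M$), noting that if this clique already has size at least $n$ we are done immediately.

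The core of the argument is then a contradiction. Suppose no vertex outside $A$ extends $A$ in the required way; then every one of the $r-(n-1)$ vertices $v\notin A$ has at least $s+1$ $M$-neighbours in $A$. Counting the edges of $M$ between $A$ and $V(M)\setminus A$ from the outside gives at least $(s+1)(r-n+1)$ such edges, and since by hypothesis $(r-n)(s+1)>(n-1)(n-2)$ this total is strictly more than $(n-1)(n-2)$. I would then bound the same quantity from the other side, where the triangle-free condition is the engine: for each $a\in A$ the neighbourhood $N(a)$ is independent in $M$ (a triangle through $a$ is impossible), hence is itself a clique of $M^c$ and so has size at most that of the largest clique; and for any two vertices of $V(M)\setminus A$ that are adjacent in $M$, their neighbourhoods inside $A$ are disjoint, since a common neighbour in $A$ would close a triangle.

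The main obstacle is making this upper bound match the constant $(n-1)(n-2)$ exactly, that is, showing that under the standing assumption the number of $A$-to-outside edges of $M$ cannot exceed the number $(n-1)(n-2)$ of ordered pairs of distinct vertices of $A$. The disjointness property disposes of the easy regime $2(s+1)>n-1$: there no two outside vertices can be $M$-adjacent, so the outside set is independent in $M$ and hence has at most $\alpha(M)=n-1$ vertices, which already contradicts the lower bound. The delicate regime is small $s$, where outside vertices may be adjacent; there I would charge each $A$-to-outside edge $va$ to an ordered pair of vertices of $A$ — using the independence of $N(v)\cap A$ to select a second neighbour $a'\in N(v)\cap A$ and mapping the edge to $(a,a')$ — and then use the triangle-free structure to control the overcounting so that the total stays at most $(n-1)(n-2)$. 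Pinning down this charging so that it reproduces the stated bound, rather than a weaker quadratic-in-$s$ estimate, is the step I expect to require the most care.
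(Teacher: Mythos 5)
Your reformulation of the conclusion and your lower-bound count are exactly right and parallel the paper's argument: under the contradiction hypothesis every vertex outside the anti-clique $A$ sends at least $s+1$ edges of $M$ into $A$, and the hypothesis on $r$ makes this total exceed $(n-1)(n-2)$. The genuine gap is the step you yourself flag as delicate, and it is not merely delicate -- it is impossible as described. Your proposed charging uses only three facts: $M$ is triangle-free, $A$ is a maximum independent set of size $n-1$, and every outside vertex has at least $s+1$ neighbours in $A$. But these facts do not imply that at most $(n-1)(n-2)$ edges leave $A$: take $M=K_{n-1,n-1}$ with $A$ one side of the bipartition and any $1\le s\le n-2$. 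This graph is triangle-free, $A$ is a maximum independent set of size $n-1$, every outside vertex has $n-1\ge s+1$ neighbours in $A$, yet $(n-1)^2>(n-1)(n-2)$ edges leave $A$; in particular no injective assignment of these edges to ordered pairs of distinct vertices of $A$ can exist, for any choice of ``second neighbour''. (This $M$ does not contradict the lemma -- it is too small to satisfy $(r-n)(s+1)>(n-1)(n-2)$ -- but your charging step never invokes that inequality, and the lower-bound half of your proof is where it is already spent, so the step cannot be rescued without new input.)

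The missing idea, and the way the paper actually obtains the constant $(n-1)(n-2)$, is a preliminary vertex deletion. Under the contradiction hypothesis every vertex of $M$ has degree at most $n-1$, since a vertex of degree at least $n$ has an independent neighbourhood and hence gives $K_n\subseteq M^c$, which contains $K_n-K_{1,s}$. If some vertex $v$ has degree exactly $n-1$, delete $v$ and take $A=N(v)$: this is an anti-clique of size $n-1$, and in $M'=M-v$ every vertex of $A$ has degree at most $n-2$, because each has lost its edge to $v$. If no such vertex exists, all degrees are already at most $n-2$, and one deletes an arbitrary vertex outside a fixed anti-clique of size $n-1$. Either way $M'$ has $r-1$ vertices and at most $(n-1)(n-2)$ edges leave $A$, while each of the $r-n$ vertices of $M'$ outside $A$ still sends at least $s+1$ edges into $A$ (otherwise it extends $A$ to a $K_n-K_{1,s}$ in $M^c$), contradicting $(r-n)(s+1)>(n-1)(n-2)$. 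Note that the deletion does double duty: it caps the degrees inside $A$ at $n-2$, and it reduces the number of outside vertices from $r-n+1$ to $r-n$, which is exactly the constant appearing in the hypothesis. Your ``easy regime'' observation ($2(s+1)>n-1$) is essentially correct once the trivial case $s\ge n-1$ is handled separately (there the ``at least $s+1$ neighbours'' condition forces the outside to be empty), but to close the small-$s$ regime you need some analogue of this deletion step for anti-clique vertices of degree $n-1$; the configuration $K_{n-1,n-1}$ shows that no purely local counting can replace it.
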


\begin{proof}
Suppose otherwise. If there exists a vertex with degree at least $n$, then
$M^c$ contains $K_n$, since $M$ is triangle-free. Now assume that
there exists a vertex $v$ with degree $n-1$. Then the neighbourhood of
$v$ consists of an anti-clique
of size $n-1$, that is, deleting this vertex we obtain a graph $M'$
with $r-1$ vertices, which contains an anti-clique of size $n-1$,
such that each vertex in this anti-clique has degree $\leq n-2$. If
there is no vertex with degree $n-1$, we delete an arbitrary vertex not
contained in some specified anti-clique of size $n-1$. In each case we
obtain an induced subgraph $M'$ of $M$ with $r-1$
vertices, which contains an anti-clique $A$ of size $n-1$, such that
every vertex in this anti-clique has degree at most $n-2$. From each
vertex in $V\setminus A$ there are at least 
$s+1$ edges connecting this vertex with an element of $A$, for
otherwise adding this vertex to $A$ we would obtain a supergraph of
$K_n-K_{1,s}$ in $M^c$. Hence, there exists a vertex $v\in A$, which has degree
at least $\frac{(r-n)(s+1)}{n-1}$. By assumption this quantity is larger
than $n-2$, contradicting the choice of $M'$.
Hence our claim follows.
\end{proof}

\begin{proposition}
Let $r, n, s$ be integers such that $1\leq s< n$ and $(r-n)(s+1)>(n-1)(n-2)$. Then for every
triangle-free graph $M$ on $r$ vertices, such that $M^c$ contains
$K_n-\Delta_{s+1}$, we have that $M^c$ contains $K_n-T_{s+}$.
\end{proposition}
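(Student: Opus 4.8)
The plan is to reduce the statement to an elementary comparison of $\Delta_{s+1}$ and $T_{s+}$, together with one appeal to Lemma~\ref{lem:oldlemma}. The starting observation is that $T_{s+}$ is obtained from $\Delta_{s+1}$ by deleting one of the two triangle edges incident with the apex: writing $c$ for the vertex of maximum degree in $\Delta_{s+1}$ and $\ell_1\ell_2$ for its unique edge joining two leaves, one has $T_{s+}=\Delta_{s+1}-c\ell_1=\Delta_{s+1}-c\ell_2$. Thus $\Delta_{s+1}$ and $T_{s+}$ share the same $(s+2)$-vertex support, and $K_n-T_{s+}$ demands exactly one more present edge than $K_n-\Delta_{s+1}$. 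I would first record the routine equivalence that, for a graph $H$ on at most $n$ vertices, $M^c$ contains $K_n-H$ if and only if there is an $n$-set $W\subseteq V(M)$ for which the induced subgraph $M[W]$ embeds (as a subgraph, allowing isolated vertices in $H$) into $H$; the content is simply that the non-edges of $M^c$ inside $W$ are precisely the edges of $M[W]$, and these are the pairs that the removed graph must absorb.

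Applying this to the hypothesis, I fix an $n$-set $W$ realizing $M^c\supseteq K_n-\Delta_{s+1}$ and set $F:=M[W]$. Then $F$ is a subgraph of $\Delta_{s+1}$, and since $M$ is triangle-free so is $F$; in particular $F$ cannot contain the whole triangle on $c,\ell_1,\ell_2$. The heart of the argument is then the elementary claim that \emph{every triangle-free subgraph $F\subseteq\Delta_{s+1}$ other than the full star $K_{1,s+1}$ embeds into $T_{s+}$}. This is a short case split on which triangle edge $F$ omits: if $F$ misses $c\ell_1$ or $c\ell_2$ then already $F\subseteq T_{s+}$; otherwise $F$ contains both $c\ell_1$ and $c\ell_2$, hence (being triangle-free) misses $\ell_1\ell_2$, so $F$ is a substar of $K_{1,s+1}$, and being different from $K_{1,s+1}$ it has at most $s$ edges at $c$, whence $F\subseteq K_{1,s}\subseteq T_{s+}$. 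Feeding the resulting embedding $F\hookrightarrow T_{s+}$ back through the equivalence yields $M^c\supseteq K_n-T_{s+}$ on the same set $W$.

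The only configuration this does not cover is $F=K_{1,s+1}$, that is, a vertex $c\in W$ joined in $M$ to $s+1$ other vertices of $W$ and to no further vertex of $W$. In that case $W\setminus\{c\}$ is an independent set of $M$ of size $n-1$, so $M^c$ contains $K_{n-1}$. Now the standing hypotheses $1\leq s<n$ and $(r-n)(s+1)>(n-1)(n-2)$ are exactly those of Lemma~\ref{lem:oldlemma}, which therefore gives $M^c\supseteq K_n-K_{1,s}$; since $K_{1,s}\subseteq T_{s+}$ we have $K_n-T_{s+}\subseteq K_n-K_{1,s}$, and so $M^c\supseteq K_n-T_{s+}$ again.

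I expect the star case to be the only genuine obstacle. It is precisely where the otherwise ``free'' upgrade from $\Delta_{s+1}$ to $T_{s+}$ fails on $W$ itself, the obstruction being a vertex of $M$-degree $s+1$; no local relabelling of $W$ removes it, and one must instead invoke the global density estimate packaged in Lemma~\ref{lem:oldlemma}, which is exactly why the counting hypothesis is imposed. The remaining points to verify carefully are the two directions of the containment equivalence and the degenerate small-$s$ behaviour of $T_{s+}$ (where the middle vertex rather than the apex may carry the maximum degree), but these are routine.
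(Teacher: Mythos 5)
Your proof is correct and follows essentially the same route as the paper: both exploit that $M[W]$ is triangle-free to conclude a triangle edge of $\Delta_{s+1}$ survives in $M^c$, giving $K_n-T_{s+}$ directly (when a $c\ell_i$ edge survives) or reducing to the star case, where $M^c\supseteq K_{n-1}$ and Lemma~\ref{lem:oldlemma} yields $K_n-K_{1,s}\supseteq K_n-T_{s+}$. Your case split is marginally finer than the paper's (you invoke the lemma only for the full star $K_{1,s+1}$, handling proper substars directly), but the argument is the same.
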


\begin{proof}
Let $M$ be a counterexample. Since $M$ is triangle-free, at least one of the edges
in the triangle missing in $K_n-\Delta_{s+1}$ must be present in the subgraph of $M^c$
containing $K_n-\Delta_{s+1}$. So 
$M^c$ contains $K_n-T_{s+}$ or $K_n-K_{1,s+1}$. In the former case our
claim follows immediatelly, while in the latter we have that $M^c$
contains $K_{n-1}$, and by Lemma~\ref{lem:oldlemma} we obtain that $M^c$ contains
$K_n-K_{1,s}$ and therefore also
$K_n-T_{s+}$.
\end{proof}

\begin{proposition}
Suppose that $r,n,s$ satisfy $r\geq R(K_3, K_{n-1}-e)$, $(r-n+1)s>
(n-2)(n-3)$ and $(r-n)(s+1)>(n-1)(n-2)$. Then $r\geq R(K_3, K_n-T_{s+})$. 
\end{proposition}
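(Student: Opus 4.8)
The plan is to prove the equivalent structural statement: every triangle-free graph $M$ on $r$ vertices has $M^c \supseteq K_n - T_{s+}$. Since $r\geq R(K_3,K_{n-1}-e)$, the complement $M^c$ already contains $K_{n-1}-e$; equivalently there is a set $S$ of $n-1$ vertices that spans exactly one edge $xy$ in $M$, so that the remaining $n-3$ vertices form an anti-clique $A$ in $M$ whose elements have no neighbour in $S$ at all, and $x,y$ have no neighbour in $S$ other than each other. I would argue by contradiction: assume $M^c$ does not contain $K_n-T_{s+}$, and in particular that $S$ cannot be extended by any single vertex $v\in V(M)\setminus S$ to a copy of $K_n-T_{s+}$ in $M^c$.

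The heart of the argument is a short case analysis of when such an extension succeeds. Adding $v$ to $S$ yields the $n$-vertex graph $M[S\cup\{v\}]$, whose only edges are $xy$ together with the star joining $v$ to $N(v)\cap S$; the extension works precisely when this graph embeds into $T_{s+}$ (padded with isolated vertices). Because $M$ is triangle-free, $v$ can be adjacent to at most one of $x,y$. If $v$ is adjacent to neither, the graph is a disjoint edge plus a star on $N(v)\cap A$, which embeds as soon as $|N(v)\cap A|\leq s-1$. If $v$ is adjacent to exactly one of $x,y$, the graph is a copy of $T_{d+}$ with $d=|N(v)\cap S|$, which embeds exactly when $d\leq s$. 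In both cases, non-extendability forces $|N(v)\cap A|\geq s$. This uniform conclusion — that \emph{every} vertex outside $S$ has at least $s$ neighbours in the small anti-clique $A$ — is the key step, and the place where triangle-freeness is essential; I expect it to be the only delicate point of the proof.

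With this in hand the remainder is routine counting. Double counting the edges between $A$ and $V(M)\setminus S$ gives $\sum_{a\in A}|N(a)\cap(V(M)\setminus S)|\geq s(r-n+1)$, so averaging over the $n-3$ vertices of $A$ produces some $a\in A$ with at least $\frac{s(r-n+1)}{n-3}$ neighbours outside $S$. Since $a$ has no neighbour inside $S$, this is its full degree, and the hypothesis $(r-n+1)s>(n-2)(n-3)$ makes this quantity exceed $n-2$, hence $\deg_M(a)\geq n-1$. As $M$ is triangle-free, $N(a)$ is an anti-clique of size at least $n-1$, so $M^c$ contains $K_{n-1}$. Now the second hypothesis $(r-n)(s+1)>(n-1)(n-2)$ lets me invoke Lemma~\ref{lem:oldlemma} to get $M^c\supseteq K_n-K_{1,s}$, and since $K_{1,s}\subseteq T_{s+}$ this yields $M^c\supseteq K_n-T_{s+}$, contradicting our assumption. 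The two numerical hypotheses thus play cleanly separated roles: the first drives the averaging step that manufactures a vertex of degree $n-1$, and the second feeds Lemma~\ref{lem:oldlemma}.
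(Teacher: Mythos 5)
Your proof is correct apart from one small imprecision, and it is essentially the paper's own argument run in contrapositive form: the paper first settles the case $M^c\supseteq K_{n-1}$ by Lemma~\ref{lem:oldlemma}, then uses the resulting degree bound $\deg\leq n-2$ to cap the number of edges leaving the $n-3$ vertices of the $K_{n-1}-e$ witness that miss its unique edge at $(n-2)(n-3)$, and so finds an outside vertex with at most $s-1$ neighbours among them, which extends the witness directly to $K_n-T_{s+}$; you run the very same count in the opposite direction (no extension vertex forces at least $s(r-n+1)$ such edges), manufacture a vertex of degree $n-1$, hence $K_{n-1}\subseteq M^c$, and close with the same Lemma~\ref{lem:oldlemma}. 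The embedding analysis into $T_{s+}$ — triangle-freeness forbidding adjacency to both endpoints of the unique edge, and the two resulting configurations fitting inside $T_{s+}$ — is identical in both proofs, so this is the same argument reorganized, not a different route.

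The one flaw: $M^c\supseteq K_{n-1}-e$ is \emph{not} equivalent to the existence of an $(n-1)$-set spanning exactly one edge of $M$; it only gives a set spanning at most one edge, and if every such set spans none, your vertices $x,y$ do not exist and the case analysis has nothing to work with. The case is harmless — then $M^c\supseteq K_{n-1}$, and your own closing step (Lemma~\ref{lem:oldlemma} together with $K_{1,s}\subseteq T_{s+}$) finishes immediately — but it must be split off explicitly at the start, which is exactly how the paper's proof begins.
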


\begin{proof}
Let $M$ be a triangle-free graph on $r$ vertices. 
If $M^c$ contains $K_{n-1}$ our
claim follows from Lemma~\ref{lem:oldlemma}.
So we may assume that every vertex in
$M$ has degree at most $n-2$ and that $M^c$ does not contain
$K_{n-1}$. From $r\geq R(K_3, K_{n-1}-e)$ it now follows that $M^c$
contains $K_{n-1}-e$ as an induced subgraph. Let $A$ be a set of
$n-1$ vertices of $M$, such that the edge $(v,w)$ is the unique edge 
between vertices in $A$. As at most $(n-2)(n-3)$ edges go from
$A-\{v,w\}$ to $V\setminus A$ and
$(r-(n-1))s>(n-2)(n-3)$, there exists a vertex $x\in V\setminus A$
which is connected to at most $s-1$ elements of 
$A-\{v,w\}$. If $x$ was connected to both $v$ and $w$, then $M$ would
contain a triangle, hence the induced subgraph on $A\cup\{x\}$ is
contained in $T_{s+}$. We conclude that $M^c$ contains $K_n-T_{s+}$, and
our claim follows.
\end{proof}

\begin{proposition}
\label{Prop:doublestar}
Let $n$ be an integer, $M$ be a triangle-free
graph, such that $M^c$ contains $K_n$. Assume further that $|V|\geq
3n+4$. Then $M^c$ contains $K_{n+2}-D_{m,m}$, where
$m=\lfloor\frac{n-1}{2}\rfloor$.
\end{proposition}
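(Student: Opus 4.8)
The plan is to recast the conclusion on the $M$-side. Since the complement of $K_{n+2}-D_{m,m}$ inside $K_{n+2}$ is exactly $D_{m,m}$, the assertion that $M^c$ contains $K_{n+2}-D_{m,m}$ is equivalent to the existence of a set $S\subseteq V$ with $|S|=n+2$ whose induced subgraph $M[S]$ is isomorphic to a subgraph of the double star $D_{m,m}$. I would therefore hunt for $n+2$ vertices spanning very few edges in $M$. Let $A$ be a maximum independent set of $M$; as $M^c$ contains $K_n$ we have $|A|\geq n$, and if $|A|\geq n+2$ we are finished at once, because any $n+2$ of its vertices induce the empty graph, which is a subgraph of $D_{m,m}$. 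So it remains to treat $|A|\in\{n,n+1\}$. The structural fact I would isolate first is that, since $M$ is triangle-free, the neighbourhood $N(w)$ of every vertex $w$ is independent, whence $|N(w)|\leq|A|\leq n+1$. Writing $V'=V\setminus A$ and $d_A(w)=|N(w)\cap A|$, this bounds the number of edges between $A$ and $V'$ by $\sum_{a\in A}|N(a)|\leq|A|^2$, while $|V'|\geq 3n+4-|A|$ is large.

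In the case $|A|=n+1$ I would adjoin a single vertex $u\in V'$. Then $M[A\cup\{u\}]$ is the star with centre $u$ and $d_A(u)$ leaves, and a star $K_{1,t}$ is a subgraph of $D_{m,m}$ precisely when $t\leq m+1$ (map $u$ to one centre, one neighbour to the other centre, the remaining neighbours to leaves). So it suffices to produce $u$ with $d_A(u)\leq m+1$, and this follows by averaging: if every vertex of $V'$ had $d_A\geq m+2$ then the number of edges between $A$ and $V'$ would be at least $|V'|(m+2)\geq(2n+3)(m+2)$, which exceeds $|A|^2=(n+1)^2$ once one inserts $m=\lfloor(n-1)/2\rfloor$. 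Hence the required $u$ exists.

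The substantial case is $|A|=n$, where I must adjoin two vertices $u,v$ and arrange that $M[A\cup\{u,v\}]$ is a subgraph of $D_{m,m}$. Because $A$ is independent, the only vertices that can carry degree larger than $1$ in $M[A\cup\{u,v\}]$ are $u$ and $v$, so these must play the two centres; one checks that $M[A\cup\{u,v\}]$ is a subgraph of $D_{m,m}$ as soon as $d_A(u),d_A(v)\leq m$ and $u,v$ have no common neighbour in $A$, and that triangle-freeness supplies the latter for free whenever $u\sim v$, since a common neighbour would close a triangle. I would first locate low-degree vertices: the same averaging (now against $|A|^2=n^2$ and $|V'|\geq 2n+4$) shows that $L=\{w\in V':d_A(w)\leq m\}$ is nonempty and that its complement $H=V'\setminus L$ has size at most $n^2/(m+1)\leq 2n$. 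If $M[L]$ contains an edge $uv$ we are done by the adjacency case; and if $M[L]$ were edgeless, then $L$ would be an independent set, so by maximality of $A$ we would have $|L|\leq n$.

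The hard part will be exactly this last step when $|A|=n$. To finish cheaply it would suffice to sharpen the bound on $H$ to $|H|<|V'|-n$, for then $|L|>n$ forces an edge inside $L$; failing that, I must instead exhibit two vertices of $L$ with disjoint neighbourhoods in $A$, while ruling out the genuine obstruction of two vertices sharing two common neighbours in $A$, which would create a $C_4$ and hence cannot embed in the acyclic double star. The tools I would deploy are the degree bound $|N(w)|\leq n$ from triangle-freeness, the heavy overlap of high-degree vertices (for $n$ odd any two vertices with at least $m+1$ neighbours in $A$ share one, since $2(m+1)>n$, and are therefore non-adjacent), the maximality of $A$ forbidding large independent sets, and the slack of $n-2m\in\{1,2\}$ isolated vertices allowed in $D_{m,m}$, which can absorb a vertex of degree at most $1$. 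Combining a refined count of $|H|$ with this case distinction should pin down the pair $u,v$ and complete the proof.
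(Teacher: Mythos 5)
Your reduction and your treatment of the cases $|A|\geq n+1$ are correct: restating the conclusion as ``some $n+2$ vertices of $M$ induce a subgraph of $D_{m,m}$ plus $n-2m$ isolated vertices'' is exactly right, the averaging bound $(2n+3)(m+2)>(n+1)^2$ holds for both parities of $n$, and your sufficient condition for a pair $u,v$ (both with at most $m$ neighbours in $A$ and no common neighbour there) is valid. The proof is nevertheless incomplete, and you say so yourself: in the main case $|A|=n$ you settle only the subcase in which your set $L$ contains an edge, and the closing paragraph is a plan rather than an argument. Moreover, half of that plan provably cannot be carried out. For odd $n$ your listed tools do finish the job: any two vertices with at least $m+1=(n+1)/2$ neighbours in $A$ share a common neighbour and hence are non-adjacent, so $H$ is independent, $|H|\leq n$ by maximality of $A$, and then $|L|\geq|V'|-n\geq n+4>n$ forces an edge inside $L$. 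For even $n$, however, $m+1=n/2$, and two vertices having exactly $n/2$ neighbours in $A$ may be adjacent when their $A$-neighbourhoods are complementary. These ``medium'' vertices defeat both branches of your plan: $H$ need not be independent, and no refinement of the counting yields $|H|<|V'|-n$, since a configuration with $|L|=n$, no exceptional vertices, and $n+4$ medium vertices has $e(A,V')\geq n+(n/2)(n+4)=n^2/2+3n\leq n^2$ for $n\geq 6$, so it passes every edge count; the pairwise-intersection structure you impose on $L$ gives counting nothing further to exploit.

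What is missing is a structural parity argument, and this is precisely the heart of the paper's proof. There one shows that in a counterexample the graph induced on the outside vertices having at least one neighbour in $A$ is bipartite: small vertices are pairwise non-adjacent, a vertex with more than $n/2$ neighbours in $A$ is adjacent to no vertex with at least $n/2$ such neighbours, adjacent medium vertices have complementary $A$-neighbourhoods, and every cycle splits into segments of even length between consecutive small vertices -- an odd segment would produce two small vertices with disjoint $A$-neighbourhoods, i.e., exactly your embeddable pair. Bipartiteness on at least $2n+3$ vertices then yields an independent set of size $n+2$, which in your setting contradicts $\alpha(M)=n$ at once (the paper, which never assumes $A$ is maximum, instead concludes $K_{n+2}\subseteq M^c$ directly). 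So your framework can in principle be completed -- if no embeddable pair exists, every cycle among the small and medium vertices is even, forcing $|V'|\leq 2n$ against $|V'|\geq 2n+4$ -- but the cycle-parity analysis through chains of medium vertices is the essential missing ingredient, and without it the even-$n$ case of your proof remains open.
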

\begin{proof}
Assume the statement was false, and that $M$ was a counterexample.

Fix an anti-clique of size $n$ and call it $A$. We now partition $V-A$
into four sets: $L$, the set of large vertices, which are connected
with more than $n/2$ elements of $A$; $H$, the set of medium vertices,
which are connected to exactly $n/2$ vertices of $A$; $S$, the set
of small vertices, which are connected to at least 1, but at most $m$
vertices of $A$; and $X$, the set of exceptional vertices, which are
not connected to $A$. Note that medium vertices can only
exist for even $n$.

If there are two different vertices $v, w\in X$, 
then $A\cup\{v,w\}$ contains at most one edge, and our
claim follows. Hence we may assume that $|X|\leq 1$ and therefore
$|L\cup H \cup S|\ge 2n+3$. We will prove that the graph induced
by $L\cup H \cup S$ is bipartite and therefore contains an
anti clique of size $n+2$ contradicting the assumption.

If 2 vertices $v\not= w \in S$ were adjacent, $N(v)\cap A$
and $N(w)\cap A$ would be disjoint as $M$ is triangle-free.
But then $A\cup\{v,w\}$ would induce a supergraph of $K_{n+2}-D_{m,m}$
in $M^c$.

Vertices in $L$ can not be adjacent with vertices in $L\cup H$
as the two endpoints of the edge would have to have a common
neighbour in $A$.

So cycles in the graph induced by $L\cup H \cup S$ contain
either only vertices from $H$ or can be split into parts
by vertices from $S$. 

If two vertices $v\not= w \in H$ are adjacent,
due to $M$ being triangle-free and each vertex having
$n/2$ neighbours in $A$, we have $N(w)\cap A= A \setminus N(v)$.

If $v_1,\dots,v_k=v_1$ is a cycle containing only vertices from
$H$, then we have $N(v_{i+1})\cap A= A \setminus N(A)$.
So $v_k=v_1$ implies that $k$ must be even.

If a cycle contains elements from $S$, then each part between two
subsequent vertices $v,w$ (which can be the same) 
from $S$ contains an even number of edges:
If $v$ is followed by a vertex from $L$, then the next vertex is $w$ --
so the segment contains two edges. If there was a path
$v, x_1, x_2, \ldots, x_k, w$ with $k>0$ even, then $N(x_{1})\cap A= A \setminus N(x_{k})$.
As $M$ is triangle-free and $v, x_1$ are adjacent we have $N(v)\subset N(x_{k})\cap A$
and analogously $N(w)\subset N(x_{1})\cap A$ -- so the neighbourhoods of $v$
and $w$ are disjoint in $A$, so that $A\cup\{v,w\}$ would again
induce a supergraph of $K_{n+2}-D_{m,m}$ in $M^c$. So $k$ must be odd
and the segment contains an even number of edges. This implies that each cycle
consists of a certain number of segments of even length and is therefore even --
proving that the graph induced by $L\cup H \cup S$ is bipartite.
\end{proof}

\begin{proposition}
Let $n, r, s, t$ be integers, such that $s+t+2\le n$, $s\geq t>0$,
$(r-n)(s+1)>(n-1)(n-2)$, and $(r-(n-1))(s+1)>(n+2(s-t)-2)(n-3)$. Then
every graph on $r$ vertices which contains $K_{n-1}-e$ contains
$K_n-K_{1,s}-K_{1,t}$.
\end{proposition}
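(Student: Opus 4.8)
I read the statement in the setting of the preceding results: the ambient graph is the complement $M^{c}$ of a triangle-free graph $M$ on $r$ vertices (which is the setting in which the result is applied to Ramsey numbers), so the task is to show that if $M^{c}\supseteq K_{n-1}-e$ then $M^{c}\supseteq K_{n}-K_{1,s}-K_{1,t}$. The plan is to run the same ``add one vertex to an almost-complete clique'' argument as in the previous propositions. First I would dispose of the easy case: if $M^{c}$ contains $K_{n-1}$, then Lemma~\ref{lem:oldlemma}, applied with the parameter $s$ and the hypothesis $(r-n)(s+1)>(n-1)(n-2)$, already gives $K_{n}-K_{1,s}\subseteq M^{c}$; since $s+t+2\le n$ there is room to delete a further vertex-disjoint star $K_{1,t}$, so $K_{n}-K_{1,s}-K_{1,t}\subseteq K_{n}-K_{1,s}\subseteq M^{c}$ and we are done. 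Hence I may assume $M^{c}$ does not contain $K_{n-1}$, which for triangle-free $M$ forces every vertex of $M$ to have degree at most $n-2$. Using the containment of $K_{n-1}-e$ (if this copy were actually a $K_{n-1}$ we would be back in the first case) I obtain an induced copy: a set $A$ of $n-1$ vertices whose only edge in $M$ is $vw$. Write $A_{0}=A\setminus\{v,w\}$, an anti-clique of size $n-3$.

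The target is to append a vertex $x\in V\setminus A$ so that the edges of $M$ inside $A\cup\{x\}$ split into two vertex-disjoint stars of sizes at most $s$ and $t$, i.e.\ so that $M^{c}[A\cup\{x\}]\supseteq K_{n}-K_{1,s}-K_{1,t}$. The natural embedding uses the edge $vw$ as the small star $K_{1,t}$ and the star centred at $x$ with leaf-set $N(x)\cap A_{0}$ as the big star $K_{1,s}$. This works cleanly precisely when $x\not\sim v,w$ and $|N(x)\cap A_{0}|\le s$; triangle-freeness is what guarantees compatibility, since it forbids $x$ from being adjacent to both $v$ and $w$. If instead $x$ is adjacent to exactly one of $v,w$ — say $v$ — then $A\cup\{x\}$ induces the caterpillar $w\,{-}\,v\,{-}\,x\,{-}\,N(x)\cap A_{0}$, which is not a union of two disjoint stars; but such an $x$ can still be salvaged when it has few neighbours in $A_{0}$: deleting $v$ turns $A\setminus\{v\}=A_{0}\cup\{w\}$ into an anti-clique of size $n-2$ on which $x$ is a clean star with leaf-set $N(x)\cap A_{0}$ and $w$ is isolated, after which one restores the vertex count to $n$ by adjoining a further vertex playing the role of the big star centre. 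In this way an $x$ adjacent to $v$ or $w$ is usable as long as $|N(x)\cap A_{0}|\le t$.

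The existence of a usable $x$ would then follow from a charging argument over $B=V\setminus A$, counting the at most $(n-3)(n-2)$ edges leaving $A_{0}$. Call $x$ bad if it is not usable; the bad vertices split into $P=\{x\not\sim v,w:|N(x)\cap A_{0}|\ge s+1\}$ and $Q=\{x\sim v\text{ or }w:|N(x)\cap A_{0}|\ge t+1\}$, two disjoint sets whose members consume at least $s+1$ and at least $t+1$ edges out of $A_{0}$ respectively, so that $(s+1)|P|+(t+1)|Q|\le(n-3)(n-2)$. Since moreover $|Q|\le|N(v)\cap B|+|N(w)\cap B|\le 2(n-3)$, the sum $|P|+|Q|$ is maximised by taking $|Q|$ as large as possible (the smaller charge $t+1$ makes the $Q$-vertices cheaper), and a short optimisation gives $|P|+|Q|\le\frac{(n-3)\bigl(n+2(s-t)-2\bigr)}{s+1}$. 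Hence the second hypothesis $(r-(n-1))(s+1)>(n+2(s-t)-2)(n-3)$ forces $|B|>|P|+|Q|$ and produces a usable $x$, which completes the argument.

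The part I expect to be the main obstacle is making the salvage step of the second paragraph fully rigorous. One has to verify that, for a vertex $x$ adjacent to $v$ or $w$ with $|N(x)\cap A_{0}|\le t$, the extra ``big star centre'' vertex can always be chosen simultaneously with $x$, respecting disjointness of the two stars and the triangle-free structure. This is exactly what legitimises charging the $v,w$-adjacent bad vertices at the cheaper rate $t+1$ rather than discarding all of them, and it is the reason the stated hypothesis carries the asymmetric factor $n+2(s-t)-2$ instead of the cruder $n+2s$ that the clean single-vertex embedding alone would yield.
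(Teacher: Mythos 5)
Your proposal follows the paper's skeleton faithfully (dispose of the $K_{n-1}$ case via Lemma~\ref{lem:oldlemma}, take $A$ with unique edge $vw$, bound the edges leaving $A_0=A\setminus\{v,w\}$, and your final arithmetic with the factor $n+2(s-t)-2$ is exactly the paper's), but the step you yourself flag as the ``main obstacle'' is a genuine gap, and it is not a technicality --- it is the whole difficulty. Your charging argument only produces \emph{one} vertex of $B$ outside $P\cup Q$. If that vertex $x$ is of your type~(ii) (adjacent to, say, $v$, with at most $t$ neighbours in $A_0$), nothing has been established: to exhibit $K_n-K_{1,s}-K_{1,t}$ you still need a second vertex $y$ with $y\not\sim x$, with at most $s$ neighbours in $A_0\cup\{w\}$, and with $N(y)\cap N(x)\cap A_0=\emptyset$, and your counting gives no such $y$ --- all remaining vertices of $B$ may well lie in $P\cup Q$ or collide with the leaf set of $x$. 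In other words, charging the $v,w$-neighbours at the cheap rate $t+1$ presupposes exactly the claim that is missing: your contradiction refutes ``no usable vertex exists'', but ``$M$ is a counterexample'' does not imply that statement, because the usability of a type~(ii) vertex was never proven.

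The paper avoids this by never trying to use a single $v$- or $w$-neighbour. It asks for a \emph{pair} $x,x'$ inside the same set $X=N(v)\setminus\{w\}$, with $x$ having at most $t$ and $x'$ at most $s$ neighbours in $A_0$: after deleting $v$, triangle-freeness makes $x$, $x'$ and $w$ pairwise non-adjacent for free, so $A\cup\{x,x'\}\setminus\{v\}$ carries the two stars and no ``partner search'' is needed. The negation of ``such a pair exists in $X$'' is then a statement about all of $X$ at once --- either every vertex of $X$ has at least $t+1$ neighbours in $A_0$, or all but one vertex of $X$ has at least $s+1$ (the exception having at least $1$) --- and likewise for $Y=N(w)\setminus\{v\}$. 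This dichotomy is what legitimately replaces your pointwise charging; it yields the lower bound $\min\bigl((t+1)|X|,(s+1)(|X|-1)+1\bigr)+\min\bigl((t+1)|Y|,(s+1)(|Y|-1)+1\bigr)+(s+1)\bigl(r-(n-1)-|X|-|Y|\bigr)$ on the edges leaving $A_0$, after which your own inequalities close the proof. So to repair your argument you would essentially have to reorganise the extension step around pairs within $X$ (resp.\ $Y$), which is precisely the paper's one key idea. (A side remark: even the pair version tacitly assumes $x$ and $x'$ share no neighbour in $A_0$, so that the two stars are vertex-disjoint; the paper is silent on this point, so your instinct that ``respecting disjointness'' is delicate is well placed.)
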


\begin{proof}
Assume our statement is false, and let $M$ be a counterexample. If
$M^c$ contains $K_{n-1}$, then Lemma~\ref{lem:oldlemma} shows that $M^c$
contains $K_n-K_{1,s}$, and we are done. This implies also that
all vertices in $M$ have degree at most $n-2$. Let
$A$ be a set of $n-1$ vertices, such that among the vertices of $A$ there
is a single edge $(v,w)$. Put $X=N(v)-\{w\}$, $Y=N(w)-\{v\}$. As $M$
is triangle-free, the sets
$X, Y$ are disjoint anti-cliques and as the degrees of $v,W$ are at most 
$n-2$, we have $|X|, |Y|\leq n-3$. Each element $z$ of $|X|, |Y|$ has at least
two neighbours in $A$, as otherwise $A\cup \{z\}$ would induce
a $K_n-K_{1,2}$ in $M^c$.

Suppose that $X$ contains elements
$x, x'$, such that $x$ is connected with at most $t$ elements of
$A-\{v,w\}$, and $x'$ is connected with at most $s$ elements of
$A-\{v,w\}$. Then all edges in $A\cup\{x,x'\}-\{v\}$ are between
$\{x,x'\}$ and elements of $A-\{v,w\}$, and we obtain
$K_n-K_{1,s}-K_{1,t}$ in $M^c$. 

Hence either each element in $X$ is
connected with at least $t+1$ elements in $A-\{v,w\}$, or all but at most
one element of $X$ is connected with at least $s+1$ elements of
$A-\{v,w\}$ and the remaining element is connected with at least one
element of $A-\{v,w\}$. The same argument applies for $Y$. An element $x$ of
$V-(A\cup X\cup Y)$ is not connected to $v$ or $w$, hence if this element is
connected with $p\le s$ elements of $A-\{v,w\}$, then $A\cup\{v\}$
forms a $K_n-K_{1,p}-e$ in the complement that has $K_n-K_{1,s}-K_{1,t}$
as a subgraph. We conclude that each vertex in $V-(A\cup X\cup Y)$
has at least $s+1$ neighbours in $A-\{v,w\}$. 

Counting the edges between $A-\{v,w\}$ and $V-A$ we get a lower bound of

\begin{multline*}
\min\big((t+1)|X|, (s+1)(|X|-1)+1\big) + 
\min\big((t+1)|Y|,(s+1)(|Y|-1)+1\big)\\
 + (s+1)(r-(n-1)-|X|-|Y|) 
\end{multline*}

As a function of
$|X|$ and $|Y|$ this expression is non-increasing, hence this quantity
has its minimum for $|X|=|Y|=n-3$, which gives

\[
2\min\big((t+1)(n-3), (s+1)(n-4)+1\big)
 + (s+1)(r-3n+7).
\]

On the other hand
each vertex in $A-\{v,w\}$ has degree at most $n-2$, giving an upper
bound of $(n-2)(n-3)$. 
Now for $s=t$ we obtain $(s+1)(r-n-1)+2\leq
(n-2)(n-3)$, which contradicts our assumption $(r-n)(s+1)>(n-1)(n-2)$. 
If $t<s$, we obtain
$(s+1)(r-n+1) - 2(s-t)(n-3)\leq (n-2)(n-3)$, which contradicts
our assumption $(r-(n-1))(s+1)>(n+2(s-t)-2)(n-3)$.
Hence, in both cases our claim follows.
\end{proof}

Applying these results to the case of graphs on 10 vertices, we obtain
the following:

\begin{corollary}
\begin{enumerate}
\item For $9\ge s\geq 2$ we have $R(K_3, K_{10}-K_{1,s})=36$; 
\item For $8\ge s\geq 3$ we have $R(K_3, K_{10}-T_{s+}) = R(K_3,
  K_{10}-\Delta_{s+1})=R(K_3, K_{10}-K_{1,s}-e) = 31$;
\item We have $R(K_3, K_{10}-D_{3,3})=28$.
\end{enumerate}
\end{corollary}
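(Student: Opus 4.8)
The plan is to derive each of the three values as an Ramsey number by combining the general propositions proven above (which give upper bounds on the Ramsey numbers, i.e. statements that a suitably large triangle-free graph must have the target graph in its complement) with matching lower bounds coming from explicit or known triangle-free Ramsey graphs. Recall that to show $R(K_3, H) = r$ one must establish both that every triangle-free graph on $r$ vertices has $H$ in its complement (the upper bound $R \le r$) and that there exists a triangle-free graph on $r-1$ vertices whose complement avoids $H$ (the lower bound $R \ge r$). The general propositions supply the first half directly once the inequalities on $r, n, s, t$ are checked for $n=10$; the second half I would obtain from the known triangle Ramsey graphs for nearby graphs, since $K_{10}-K_{1,s}$, $K_{10}-T_{s+}$, etc.\ all contain a large clique minus few edges and hence a Ramsey graph avoiding a smaller such graph can be reused.

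First I would handle part~(1). For $n=10$ and $2\le s\le 9$, I would apply Lemma~\ref{lem:oldlemma} with $r=36$: the hypothesis $(r-n)(s+1)>(n-1)(n-2)$ becomes $26(s+1)>72$, which already holds for $s\ge 2$, and I must also check that a triangle-free graph on $36$ vertices has $K_9$ in its complement, i.e.\ that $R(K_3,K_9)\le 36$; since $R(K_3,K_9)=36$ is a known classical value, a triangle-free graph on $36$ vertices indeed contains $K_9$ (an independent set of size $9$) in its complement, so the lemma gives $K_{10}-K_{1,s}$ in $M^c$. This yields $R\le 36$. For the lower bound $R\ge 36$, I would exhibit a triangle-free graph on $35$ vertices whose complement avoids $K_{10}-K_{1,s}$; the natural candidate is an extremal triangle-free Ramsey graph for $R(K_3,K_9)=36$, which on $35$ vertices has independence number exactly $8$, and I would argue that such a graph cannot contain $K_{10}-K_{1,s}$ in its complement because that subgraph still forces an independent set of size larger than $8$ when $s\le 9$. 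The value $36$ being independent of $s$ in this range reflects that the binding constraint is the clique number, not the number of deleted star-edges.

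Next, for part~(2) I would argue that the three graphs $K_{10}-T_{s+}$, $K_{10}-\Delta_{s+1}$, and $K_{10}-K_{1,s}-e$ all have the same Ramsey number $31$ for $3\le s\le 8$ by a squeezing argument: each contains $K_9-e$ (or a related sparse subgraph) and is contained in $K_{10}$ minus a comparable number of edges, so the Propositions above applied with $n=10$ and the stated inequalities give the common upper bound $R\le 31$, while a shared lower-bound construction on $30$ vertices (an extremal triangle-free graph related to $R(K_3,K_8-e)$, which governs these quantities) simultaneously avoids all three in its complement. Concretely, I would invoke the third Proposition with $r=31$, $n=10$ using $r\ge R(K_3,K_9-e)$, and verify $(r-n+1)s>(n-2)(n-3)$ and $(r-n)(s+1)>(n-1)(n-2)$, i.e.\ $22s>56$ and $21(s+1)>72$, both valid for $s\ge 3$. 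Part~(3) is the application of Proposition~\ref{Prop:doublestar}: taking $n=8$ gives $m=\lfloor 7/2\rfloor=3$ and $n+2=10$, so a triangle-free graph on at least $3\cdot 8+4=28$ vertices whose complement contains $K_8$ yields $K_{10}-D_{3,3}$ in its complement; since $R(K_3,K_8)=28$, any triangle-free graph on $28$ vertices contains $K_8$ in its complement, giving $R(K_3,K_{10}-D_{3,3})\le 28$, and the matching lower bound again comes from an extremal $R(K_3,K_8)$ Ramsey graph on $27$ vertices.

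The main obstacle I anticipate is not the upper bounds, which follow mechanically once the numerical inequalities are verified, but rather the lower bounds: for each of the three claims one must produce a specific triangle-free graph on $r-1$ vertices and verify that its complement genuinely omits the target graph, and these constructions are not provided by the general propositions. In practice I would expect these extremal graphs to be exactly the known triangle Ramsey graphs for the governing classical numbers $R(K_3,K_9)$, $R(K_3,K_9-e)$ (equivalently the relevant $K_8$-based number), and $R(K_3,K_8)$, so the lower bounds reduce to citing or recomputing those critical graphs and checking a short combinatorial condition on their independence structure; the delicate point is confirming that the \emph{same} construction works uniformly across the stated range of $s$, which is precisely the phenomenon that makes the Ramsey number constant in $s$.
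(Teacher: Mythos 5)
Your overall strategy---upper bounds from the general propositions, lower bounds from known classical values via subgraph monotonicity---is exactly the paper's, and your treatment of parts (1) and (3) is correct. In fact the lower bounds are easier than you anticipate: no check on ``independence structure'' is needed, since $K_9$ is a subgraph of $K_{10}-K_{1,s}$, $K_9-e$ is a subgraph of each of $K_{10}-T_{s+}$, $K_{10}-\Delta_{s+1}$ and $K_{10}-K_{1,s}-e$ (delete the star centre), and $K_8$ is a subgraph of $K_{10}-D_{3,3}$ (delete the two centres of the double star); hence any Ramsey graph witnessing $R(K_3,K_9)=36$, $R(K_3,K_9-e)=31$, or $R(K_3,K_8)=28$ is automatically a Ramsey graph for the corresponding target graph, with nothing further to verify. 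This is all the paper means by ``the lower bounds are implied by'' those three values.

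Part (2), however, contains two genuine flaws. First, your ``squeezing argument'' cannot cover $K_{10}-K_{1,s}-e$: this graph is \emph{not} a subgraph of $K_{10}-T_{s+}$ (nor a supergraph of it), because an embedding would require the connected tree $T_{s+}$, which has $s+1$ edges, to sit inside the disconnected graph $K_{1,s}\cup e$, which also has exactly $s+1$ edges---impossible, as all edges would have to be used. So the upper bound $R(K_3,K_{10}-K_{1,s}-e)\le 31$ follows neither from the proposition on $K_n-T_{s+}$ that you invoke nor from monotonicity; it requires the paper's final proposition, on $K_n-K_{1,s}-K_{1,t}$, applied with $t=1$ (note $K_{1,1}=e$), combined with the fact that $R(K_3,K_9-e)=31$ guarantees $K_9-e$ in the complement of any triangle-free graph on $31$ vertices. (By contrast, $K_{10}-\Delta_{s+1}\subseteq K_{10}-T_{s+}$ does hold, since $T_{s+}\subseteq\Delta_{s+1}$, so that graph is covered by your argument.) Second, your lower-bound witness for part (2) is misidentified: a triangle-free graph on $30$ vertices ``related to $R(K_3,K_8-e)$'' cannot serve, since $R(K_3,K_8-e)\le R(K_3,K_8)=28\le 30$, so every triangle-free graph on $30$ vertices already has $K_8-e$ in its complement. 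The witness must be a Ramsey graph for $K_9-e$ on $30$ vertices, which exists precisely because $R(K_3,K_9-e)=31$---the value you correctly cite one sentence later when stating the hypotheses of the upper-bound proposition.
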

\begin{proof}
The upper bounds follow from the propositions, while the lower bounds
are implied by $R(K_3, K_9)=36$, $R(K_3, K_9-e)=31$, and $R(K_3,
K_8)=28$, respectively.
\end{proof}

\section{The algorithm}
\label{section_algorithm}

A {\em maximal} triangle-free graph (in short, an \textit{mtf graph})
is a triangle-free graph so that the insertion of each
new edge introduces a triangle. For $|V|>2$ this is equivalent to being triangle-free and
having diameter $2$.

As adding edges to a triangle-free graph removes edges from its complement, it is easy to
see that there is a triangle Ramsey graph of order $r$ for some graph $G$ if and only if
there is an mtf graph of order $r$ that is a Ramsey graph for $G$ (in short, an \textit{mtf Ramsey graph}).

In order to prove that $R(K_3, G) = r$, we have to show that:

\begin{itemize}

\item There are no mtf Ramsey graphs for $G$ with $r$ vertices \\(which implies $R(K_3, G) \le  r$). 

\item There is an mtf Ramsey graph for $G$ with $r-1$ vertices \\(which implies $R(K_3, G) > r-1$).

\end{itemize}

Even though only a very small portion of the triangle-free graphs are also
maximal (e.g.\ $0.002\%$ for 13 vertices and  $0.000044\%$ for 16 vertices), 
the number of mtf graphs 
still grows very fast (see Table~\ref{table:mtf_times}). Thus it is not
possible for large $r$ to generate all mtf graphs with $r$ vertices and test if they
are Ramsey graphs for a given $G$. Therefore it is
necessary to include the restriction to Ramsey graphs already in the 
generation process.

In section~\ref{section_generate_mtf} we describe an algorithm for
the generation of all non-isomorphic mtf graphs. This algorithm follows the same lines
as the algorithm in \cite{brandt_00} but uses some structural information
obtained from \cite{brandt_00} to speed up the generation. In
section~\ref{section_generate_ramsey} we describe how we extended this
algorithm to generate only mtf Ramsey graphs for a given graph $G$.
In section~\ref{section_compute_ramsey} we
describe how we used the generator for Ramsey graphs to determine the
Ramsey numbers $R(K_3, G)$. The main difference to the approach described in
\cite{brandt_98} is that the approach used here is optimized for small lists
of graphs with larger Ramsey numbers instead of large lists with comparatively
small Ramsey numbers like the approach in \cite{brandt_98}.

\subsection{Generation of maximal triangle-free graphs}
\label{section_generate_mtf}

Mtf graphs with $n+1$ vertices are generated from mtf graphs with $n$ vertices
using the same construction method as in \cite{brandt_00} but different
isomorphism rejection routines. 
To describe the construction, we
first introduce the concept of \textit{good dominating sets}.

\begin{definition}
$S \subseteq V(G)$ is a dominating set of $G$ if $S \cup \{N(s) \ | \ s \in S\} = V(G)$.

A dominating set $S$ of an mtf graph $G$ is \textit{good} if after removing all edges with both
endpoints in $S$ for every $s \in S$ and $v \in V(G) \setminus S$,
the distance from $s$ to $v$ is at most two.
\end{definition}

The basic construction operation removes all edges between 
vertices of a good dominating set $S$ and connects all vertices of $S$
to a new vertex $v$. It is easy to see that this is a recursive structure
for the class of all mtf graphs~\cite{brandt_00}.

In~\cite{brandt_00} it was observed that a surprisingly large number of mtf graphs
had automorphism groups of size 2. This was caused by two vertices with identical
neighbourhoods. We exploit this observation to improve the efficiency of the
isomorphism rejection routines. To this end we distinguish between 3 types of
good dominating sets.

\begin{enumerate}
\item[type 0:] A set $S=N(v)$ for some $v\in V$. Note that in an mtf graph for each vertex $v$ the set $N(v)$
is a good dominating set without internal edges.

\item[type 1:] A good dominating set $S$ without internal edges, but $S\not= N(v) \,\forall v \in V$.

\item[type 2:] A good dominating set $S$ with internal edges.
\end{enumerate}

We call construction operations also \textit{expansions} and the
inverse operations \textit{reductions} and will also talk about
reductions or expansions of type 0, 1 and 2 if the good dominating sets involved are
of this type.
If $G'$ is
obtained from $G$ by an expansion, we call $G'$ the child of
$G$ and $G$ the parent of $G'$.


We use the canonical construction path method~\cite{mckay_98} to make
sure that only pairwise non-isomorphic mtf graphs are generated. 
Two reductions of mtf graphs $G$ and $G'$ (which may be identical)
are called equivalent if there is an isomorphism from $G$ to $G'$
mapping the vertices that are removed onto each other and inducing
an isomorphism of the reduced graphs.
In order
to use this method, we first have to define which of the various possible
reductions of an mtf graph $G$ to a smaller mtf graph is the \textit{canonical
  reduction} of $G$. This canonical reduction must be uniquely determined up
to equivalence. We call the graph obtained by applying the
canonical reduction to $G$ the \textit{canonical parent} of
$G$ and an expansion that is the inverse of a canonical reduction
a \textit{canonical expansion}. 

Furthermore, we also define an equivalence relation on the set of
possible expansions of a graph $G$. Note that the expansions are
uniquely determined by the good dominating set $S$ to which they are
applied. Therefore we define two expansions of $G$ to be equivalent if
and only if there is an automorphism of $G$ mapping the two good
dominating sets onto each other. 

The two rules of the canonical construction path method are:

\begin{enumerate}

\item[(a)] Only accept a graph if it was constructed by a canonical expansion.

\item[(b)] For every graph $G$ to which construction operations are
  applied, perform exactly one expansion from each equivalence class of
  expansions of $G$.

\end{enumerate}

If we start with $K_1$ and recursively apply these rules to each graph
until the output size is reached, exactly one graph of each isomorphism class of mtf graphs is generated. We refer the reader to \cite{brandt_00} for a proof. The coarse structure of the algorithm is given as pseudocode in Algorithm~\ref{algo:pseudocode_mtf}.

\begin{algorithm}[h]
\caption{Construct(mtf graph $G$)}
  \begin{algorithmic}
  \label{algo:pseudocode_mtf}
	\IF{$G$ has the desired number of vertices}  
  		\STATE output $G$
  	\ELSE
  		\STATE find expansions
  		\STATE compute classes of equivalent expansions
  		\FOR{each equivalence class}
  			\STATE choose one expansion $X$
			\STATE perform expansion $X$
			\IF{expansion is canonical}  
				\STATE Construct(expanded mtf graph)
			\ENDIF
                 \STATE perform reduction $X^{-1}$
		\ENDFOR
  	\ENDIF
  \end{algorithmic}
\end{algorithm}

For deciding whether or not a reduction is canonical, we use a two step strategy.
First we decide which vertex should be removed by the canonical reduction. In case the
graph is not an mtf graph after the removal of this vertex, we determine the canonical way to insert edges.
A 5-tuple 
$t(v)=(x_0(v),\dots ,x_4(v))$ represents the vertex $v$ involved
in the reduction in such a way that
two vertices have the same 5-tuple if and only if they are in the same
orbit of the automorphism group. 
The canonical reduction will be a reduction using the vertex
with lexicographically smallest 5-tuple.

The first entry $x_0(v)$ is the type of the neighbourhood of $v$ in the reduced graph.
The most expensive part in computing the canonical reduction is the
computation of how edges have to be inserted between the former 
neighbours of the removed vertex. If the graph has vertices with identical
neighbourhood, a reduction with $x_0=0$ is always possible and 
other reductions do not have to be considered in order to find the one
with minimal 5-tuple. In case there are exactly two vertices with identical
neighbourhood, the canonical reduction is even found after this step:
no matter how the remaining entries of the 5-tuple are defined, removing one of these two vertices is the canonical reduction as they
are the only ones with minimal value for $x_0$. Furthermore there is an automorphism
exchanging the vertices and fixing the rest, so the two reductions are equivalent
and both are canonical.

The way the remaining values are chosen is the result of a lot of performance tests
comparing different choices.
The value of $x_1(v)$ is the degree $-deg(v)$ of the vertex $v$ that is to be removed
in case $x_0(v)\in \{0,1\}$ and $deg(v)$ in case $x_0(v)=2$. Furthermore
$x_2(v)=-\sum_{w\in N(v)}deg(w)$ and $x_3(v)$ can be described as
$-\sum_{w\in N(v)}|V|^{deg(w)}$. In the program $x_3$ is in fact implemented as a 
sorted string
of degrees, but it results in the same ordering.

We call a vertex $v$ \textit{eligible} for position $j$ if it is among the vertices
for which \\ $(x_0(v),...,x_{j-1}(v))$ is minimal among all possible reductions.
For for step (a) of the canonical construction path method we do not have to find the canonical reduction, but only have to
determine whether the last expansion producing vertex $w$ is canonical. Therefore each $x_i$ is only computed if the vertex $w$ is
still eligible for position $i$ and only for vertices which are eligible for position $i$. 
If $x_0(w)\in \{0,1\}$ and 
$w$ is the only vertex eligible for position $i$, 
we know that the expansion was canonical. If $x_0(w)=2$, we still have to determine
whether the edges that have been removed are equivalent to 
the edges that would be inserted for a canonical reduction.

If there are also other vertices eligible for position $4$, 
we canonically label the graph
$G$ using the program \textit{nauty}~\cite{mckay_81} and define $x_4(v)$
to be the negative of the largest label in the canonical labelling of $G$ of a vertex
which is in the same orbit of the automorphism group of $G$ as
$v$. The discriminating power of $x_0,...,x_3$ is usually enough to
decide whether or not a reduction is canonical. For example for generating all mtf graphs with $n=20$ vertices, the more
expensive computation of $x_4$ is only required in $7.8$\% of the
cases. This fraction is decreasing with the number of vertices
to e.g.\ $6.2$\% for $n=22$. 
After computing $x_4$ the vertex in the canonical reduction is uniquely defined up to
isomorphism.

In case $x_0=2$ the canonical reduction is not completely determined
by the vertex $v$ which is removed by the canonical reduction as there can be multiple 
ways to insert the edges in the former neighbourhood of $v$.
In this case we use the same method as in ~\cite{brandt_00},
which is essentially a canonical choice of a set of edges that can be inserted
which gives priority to sets of small size.
This part hardly has any impact on the time consumption of the program.
For generating mtf graphs with $n=18$ vertices, only about $2.5$\% of the time is spent on the routines dealing with this part and already for $n=20$ this decreases to $1.5$\%. 
Therefore we decided not to develop any improvements for this part and refer the reader to~\cite{brandt_00} 
for details.


The priority of the operations expressed in the 5-tuple allows look-aheads for deciding whether or not
an expansion can be canonical before actually performing it. This is also an advantage when
constructing good dominating sets for expansion as it often allows to reduce
the number of sets that have to be constructed.

A vertex which has the same neighbourhood as another vertex is called a \textit{double vertex}. An mtf graph with double vertices can be reduced by a reduction of type 0.
If two vertices have the same neighbourhood in an mtf graph, each good dominating set without internal
edges either contains both vertices or none, so after 
an operation of type 0 or 1 the vertices still have identical neighbourhoods allowing
a reduction of type 0.
So if a graph $G$ contains a reduction of type 0, we do not have to apply expansions of type 1. Furthermore we only have to apply expansions of type 0 to neighbourhoods of vertices 
$v$ of $G$ for which $deg(v)$
is at least as large as the degree of the canonical double vertex in $G$,
otherwise the new vertex will not have the maximal value of $x_1$. If $G$
did not contain any double vertices, we have to apply operations
of type 0 to the neighbourhoods of all vertices.

After a canonical operation of type 2 no reductions of type 0 are
possible, so we only have to apply operations of type 2 that make sure
that afterwards no vertices with identical neighbourhoods exist.  Therefore the good dominating sets to which an operation of type 2 is
applied must contain at least one vertex from the neighbourhood of
each double vertex. Since if no vertex of the common neighbourhood of a pair
of double vertices is included, both vertices must be contained in the
dominating set themselves.  But then they would still have identical
neighbourhoods after the operation. Each good
dominating set must also contain a vertex from each set of vertices with identical
neighbourhood. In the program we use
this in its strongest form only if there is just one common
neighbourhood, else we use a weaker form. This is not a problem for
the efficiency as there is usually only one common
neighbourhood. 

If a graph has at least 3 vertices which have the same neighbourhood,
every graph obtained by applying an expansion of type 2 to $G$ has a
reduction of type 0. Therefore we do not have to apply expansions of
type 2 to this kind of graphs.

Due to the choice of $x_1$ the degree of the vertex to be removed
is minimal for canonical reductions of type 2. 
If we apply an operation of type 2 to a good dominating set
$S$, the new vertex $v$ will have degree $|S|$. If the
minimum degree of a graph is $m$, we only have to apply
operations of type 2 to good dominating sets of size at most $m$ (or size
$m+1$ if the good dominating set contains all vertices of minimum
degree). 



Recall that we also have to compute the equivalence classes of
expansions of a graph in order to comply with rule (b) of the
canonical construction path method. We use \textit{nauty} to compute
the automorphism group of the graph and then compute the orbits of
good dominating sets using the generators of the group. In case
we know that only an operation of type 0 can be canonical 
we actually compute the orbits of vertices representing the
good dominating sets formed by their neighbourhoods.

In some cases we do not have to call \textit{nauty} to compute the automorphism
group. For example if $G$ has a trivial automorphism group and we apply
an operation of type 0 by inserting a vertex $v'$ with the same neighbourhood as
 $v$, the expanded graph $G'$ will have an
automorphism group of size 2 generated by the automorphism exchanging $v$ and $v'$
and fixing all other vertices.

\subsection*{Testing and results}

We used our program to generate all mtf graphs up to 23 vertices. The
number of graphs generated were in complete agreement with the numbers
obtained by running the program from Brandt et al.~\cite{brandt_00}
(which is called \textit{MTF}). The graph counts, running times and a
comparison with \textit{MTF} are given in Table~\ref{table:mtf_times}. Our
program is called \textit{triangleramsey}. Both programs were compiled
by gcc and the timings were performed on an Intel Xeon L5520 CPU at
2.27 GHz. The timings for $|V(G)| \ge 20$ include a small overhead due
to parallelisation.

Table~\ref{table:mtf_operations} gives an overview how many graphs
are constructed by canonical operations of the different types. This
table shows that operations of type 2 are by far the least common
canonical operations.


\begin{table}[h]
\begin{center}
\begin{tabular}{| c || r | r | r | c |}
\hline 
$|V(G)|$ & number of graphs & MTF (s) & triangleramsey (s) & speedup\\
\hline 
17  &  164 796  &  4.0  &  0.8  &  5.00\\
18  &  1 337 848  &  30.5  &  6.2  &  4.92\\
19  &  13 734 745  &  315  &  67  &  4.70\\
20  &  178 587 364  &  4 390  &  972  &  4.52\\
21  &  2 911 304 940  &  75 331  &  17 109  &  4.40\\
22  &  58 919 069 858  &  1 590 073  &  373 417  &  4.26\\
23  &  1 474 647 067 521  &  40 895 299  &  10 431 362  &  3.92\\
\hline
\end{tabular}
\end{center}

\caption{Counts and generation times for mtf graphs.}

\label{table:mtf_times}
\end{table}

\begin{table}[h]
\begin{center}
\begin{tabular}{| c || r | r | r | r |}
\hline
number & number  & num. generated & num. generated  & num. generated\\
of  & of        & by an operation      & by an operation  & by an operation  \\
 vertices      &  mtf graphs           & of type 0            & of type 1     & of type 2   \\
\hline
4 & 2 & 2 & 0 & 0\\
5 & 3 & 2 & 0 & 1\\
6 & 4 & 4 & 0 & 0\\
7 & 6 & 6 & 0 & 0\\
8 & 10 & 9 & 0 & 1\\
9 & 16 & 15 & 0 & 1\\
10 & 31 & 29 & 1 & 1\\
11 & 61 & 57 & 3 & 1\\
12 & 147 & 139 & 4 & 4\\
13 & 392 & 368 & 15 & 9\\
14 & 1 274 & 1 183 & 75 & 16\\
15 & 5 036 & 4 595 & 391 & 50\\
16 & 25 617 & 22 889 & 2 420 & 308\\
17 & 164 796 & 142 718 & 19 577 & 2 501\\
18 & 1 337 848 & 1 105 394 & 213 743 & 18 711\\
19 & 13 734 745 & 10 674 672 & 2 855 176 & 204 897\\
20 & 178 587 364 & 129 333 325 & 46 244 514 & 3 009 525\\
\hline
\end{tabular}
\end{center}

\caption{The number of mtf graphs which were generated by operations of each type.}

\label{table:mtf_operations}
\end{table}


\subsection{Generation of Ramsey graphs}
\label{section_generate_ramsey}

The construction operations for mtf graphs never add edges between
vertices of the parent. So if $G$ is contained in the
complement of an mtf graph $M$, $G$ will also be contained in the
complement of all descendants of $M$. Thus if
$M$ is not a Ramsey graph for $G$, its descendants also won't be
Ramsey graphs. So we can prune the generation process.

The same pruning was already used in~\cite{brandt_98}, but as
the graphs with 10 vertices whose Ramsey number could not be determined in~\cite{brandt_98} 
are all very dense, we mainly optimized our algorithm for this kind of graphs
and will describe these optimisations here.

\newpage
For a graph $G$ and an mtf graph $M$ the following criteria are equivalent:

\begin{description}
\item[(i)] $G$ is subgraph of $M^c$ 
\item[(ii)] $M$ contains a spanning subgraph of $G^c$ as an induced subgraph
\end{description}

If $G$ is dense, $G^c$ has relatively few edges and therefore
it is easier to test (ii) instead of (i) in this case.

By just applying this simple algorithm, even with the faster generator we were not able to go much
further than the results in~\cite{brandt_98}. Therefore we
designed and applied several optimisations specifically for dense
graphs. These optimisations are crucial for the
efficiency of the algorithm.

The bottleneck of the algorithm is the procedure which tests if the
generated mtf graphs contain a spanning subgraph of $G^c$ as induced
subgraph. This procedure basically constructs all possible sets with
$|V(G)|$ vertices and an upper bound of $|E(G^c)|$ on the number of edges
and tests for each set if the graph induced by this
set is a subgraph of $G^c$. Various bounding criteria are used to
avoid the construction of sets which cannot be a subgraph of $G^c$.

If the algorithm as described so far is applied and the order of the
mtf graphs is sufficiently large, by far most of the mtf graphs that
are generated are rejected as they turn out to be no Ramsey graphs for
the testgraph $G$. For example for $G=K_{10}-P_5$ and $|V(M)|=28$
(without other optimisations) approximately 99\% of the mtf graphs
which were generated are no Ramsey graphs (and are thus rejected).  So
most of the tests for making spanning induced subgraphs give a positive
result -- that is: there is an induced subgraph of $M$ that is
a subgraph of $G^c$. We take this into account by first using some
heuristics to try to find a set of vertices which is a spanning
subgraph of $G^c$ quickly. If such a set is found, we can abort the
search.

More specifically: when an mtf graph is rejected because it is not a
Ramsey graph for $G$, we store the set of vertices which induces a
spanning subgraph of $G^c$. For each order~$n$, we
store up to 100 sets of vertices which caused an mtf graph with $n$
vertices to be rejected.  When a graph with $n$ vertices is generated,
we first investigate if one of those 100 sets of vertices induces a
spanning subgraph of $G^c$. Only if this is not the case, we continue
the search.  Experimental results showed that storing 100 sets seemed
to be a good compromise between cost to test if a set induces a spanning subgraph of $G^c$ and the chance to have
success. Without other optimisations this makes the program e.g.\ $5$ times faster
for $G=K_{10}-P_5$ and $|V(M)|=26$. 

The second step in trying to prove that $M$ is not a Ramsey graph
is a greedy heuristic.
We construct various
sets of $|V(G)|$ vertices which have as few neighbours with each other
as possible. These sets are good candidates to induce a subgraph of
$G^c$. Only if none of these sets induces a subgraph of $G^c$, we have
to continue to investigate the graph. This gives an additional speedup of approximately 10\%.

These heuristics allow to find a set of vertices which induces a
spanning subgraph of $G^c$ quickly in about 98\% of the cases. If
these heuristics did not yield such a set of vertices, we start a complete search. 
In about 70\% of
the cases the graphs passing the heuristical search are actually
Ramsey graphs for $G$. The coarse pseudocode of the procedure which tests if an mtf graph $M$ is a Ramsey graph for $G$ is given in Algorithm~\ref{algo:pseudocode_ramsey}.

\begin{algorithm}[H]
\caption{Is\_Ramsey\_graph(mtf graph $M$, testgraph $G$)}
  \begin{algorithmic}
  \label{algo:pseudocode_ramsey}
  	\FOR{each stored set $S$ with $n=|V(M)|$}
		\IF{$S$ induces a spanning subgraph of $G^c$ in $M$}  
			\RETURN $M$ is not a Ramsey graph for $G$
		\ENDIF
	\ENDFOR  
	\STATE construct sets of $|V(G)|$ vertices in a greedy way
	\IF{set found which induces a spanning subgraph of $G^c$ in $M$}  
		\STATE store set
		\RETURN $M$ is not a Ramsey graph for $G$
	\ENDIF
	\STATE construct all possible sets of $|V(G)|$ vertices
	\IF{set found which induces a spanning subgraph of $G^c$ in $M$}  
		\STATE store set
		\RETURN $M$ is not a Ramsey graph for $G$
	\ELSE
		\RETURN $M$ is a Ramsey graph for $G$
	\ENDIF		
  \end{algorithmic}
\end{algorithm}

The construction of all possible sets of $|V(G)|$ vertices can also be
improved. Recall that our algorithm constructs Ramsey graphs from
Ramsey graphs. Therefore if an mtf graph $M$ was constructed by
operations of type 0 or 1 (i.e.\ no edges were removed), we only have
to investigate sets of vertices which contain the new vertex which was
added by the construction. The subgraphs induced by the other sets did
not change and are already proven not to induce a spanning subgraph of
$G^c$.  Moreover if $M$ was constructed by an operation of type 0, we
only have to investigate sets of vertices which contain the new vertex
and all other vertices which have the same neighbourhood as the new
vertex. Since if a set does not contain a vertex $v$ which has the
same neighbourhood as the new vertex, we can swap $v$ and the new
vertex. 

Similarly, if $M$ was constructed by an operation of type 2 and one edge $e$
was removed (say $e=\{v_1,v_2\}$), we only have to investigate sets of
vertices which contain the new vertex or which contain both $v_1$ and
$v_2$. Similar optimisations can also be used when more edges are removed, but this does not speed up the program as in most cases such operations 
turn out to be not canonical. So then the graph is already rejected before it is tested whether or not this graph is a Ramsey graph.

We also avoid constructing mtf graphs that are no Ramsey graphs for $G$. This is of course even better than efficiently rejecting graphs after they are constructed. More specifically, each time a new mtf Ramsey graph $M$ for a graph $G$ was constructed, 
we search and store
\textit{approximating} sets of vertices. We call a set of vertices
\textit{approximating} if it induces a spanning subgraph of
$G^c_\delta$, where $G^c_\delta$ is a graph obtained by removing a
vertex of minimum degree from $G^c$. For all graphs $G$ with 10
vertices whose Ramsey
number could not be determined in~\cite{brandt_98}, 
$G^c$ has minimum degree~0.

If for a graph $M'$ which is constructed from $M$ there is an
approximating set $S$ of $F$ for which no vertex $s \in S$ is a
neighbour of the new vertex $v$, the graph induced by $S \cup
\{v\}$ in $F'$ is a spanning subgraph of $G^c$. So graphs constructed
from $M$ can only be Ramsey graphs if the good dominating set of $M$
contains at least one vertex from each approximating set in $F$. 
On average this optimisation avoids the construction of more
than 90\% of the children. 

Since searching for all approximating sets is expensive, we search for
them during the search for sets of vertices which induce a spanning
subgraph of $G^c$: when a set of $|V(G^c_\delta)|$ vertices was
formed, we test if it is an approximating set and store it if it is
the case.

\subsection{Computing the Ramsey numbers}
\label{section_compute_ramsey}

To determine the Ramsey numbers with our algorithm, we again use the same
basic strategy as Brandt et al.\ used in~\cite{brandt_98}:

Assume we have a list of all graphs $G$ with Ramsey number 
$R(K_3,G) \ge r$. We want to split this list into those with
$R(K_3,G) = r$ and those with $R(K_3,G) > r$.
We have a (possibly empty) list of
MAXGRAPHs. These are graphs which have Ramsey number $r$. We also have
a (possibly empty) list of RAMSEYGRAPHs, which are triangle-free
graphs with $r$ vertices which are (or might be) Ramsey graphs for
some of the remaining graphs.

The procedure to test whether the remaining graphs have Ramsey number 
at most $r$ or at least $r+1$ works as follows:

  \begin{algorithmic}
	\FOR{$k={n\choose 2}$ {\bf downto} $n-1$}
		\FOR{every connected graph $G$ with $k$ edges in the list}
			\IF{$G$ is not contained in any MAXGRAPH}
            	\IF{$G$ is contained in the complement of every RAMSEYGRAPH}
            		\IF{ \textit{triangleramsey} applied to $G$ finds a Ramsey graph of order $r$}
            			\STATE add this Ramsey graph to the list of RAMSEYGRAPHs
            		    \STATE $R(K_3,G) > r$            			
            		\ELSE 
            			\STATE add $G$ to the list of MAXGRAPHs
            			\STATE $R(K_3,G) \le r$
            		\ENDIF
            	\ELSE
            		\STATE $R(K_3,G) > r$
            	\ENDIF
			\ELSE
			   \STATE $R(K_3,G) \le r$
			\ENDIF
		\ENDFOR
	\ENDFOR
  \end{algorithmic}

For large orders of $r$ (i.e.\ $r \ge 26$), the bottleneck of the
procedure is computing individual Ramsey graphs by 
\textit{triangleramsey}. Here we used some additional
optimisations: if $r$ is close to $R(K_3,G)$, there are usually only
very few Ramsey graphs of order $r$ for $G$. Therefore for certain
graphs $G$ where we expected $r$ to be close to $R(K_3,G)$,
we used \textit{triangleramsey} to compute all Ramsey
graphs of order $r$ for $G$, instead of aborting the program as soon as one
Ramsey graph was found. \textit{Triangleramsey} constructs Ramsey graphs from
smaller Ramsey graphs, so in order to construct all Ramsey graphs for
$G$ of order $r+1$, we can start the program from the Ramsey graphs of
order $r$. This avoids redoing the largest part of the work. 
Of course this approach 
only works if there are not too many Ramsey graphs of order $r$ to
be stored. We used this strategy amongst others to
generate all Ramsey graphs of order $28$ for 
$K_{10}-P_5$ and $K_{10}-2P_3$ (where $P_x$ stands for the path with
$x$ vertices). Computing all Ramsey graphs with 28
vertices for $K_{10}-2P_3$ for example, required almost 4 CPU years. This yielded 7
Ramsey graphs and constructing the Ramsey graphs with 29 vertices from
these 7 graphs took less than 2 seconds.

Let $H$ be a subgraph of $G$ for which we know that $R(K_3,H) \ge r$. If
we have the list of all Ramsey graphs of order $r$ for $G$ and none of
these Ramsey graphs is a Ramsey graph for $H$, we know that $R(K_3,H)
= r$. This allowed us amongst others to determine that several
subgraphs of $K_{10}-P_5$ and $K_{10}-2P_3$ have Ramsey number 28.

\subsection{Testing and results}
By using the algorithm described in
section~\ref{section_compute_ramsey} we were able to compute all
Ramsey numbers $R(K_3,G)=r$ for connected graphs of order 10 for which
$r \le 30$ and to determine which Ramsey graphs have Ramsey number
larger than 30. Since \textit{triangleramsey} is more than 20 times
faster than \textit{MTF} for generating triangle Ramsey graphs of large order
$r$, we could only compute Ramsey numbers up to $r=26$ with the
original version of \textit{MTF}. In order to be able to compare our results
for larger $r$, we added several of the optimisations which were
described in section~\ref{section_generate_ramsey} to \textit{MTF}. With this
improved version of \textit{MTF} we were also able to determine all Ramsey
numbers up to $r=30$ and to determine which graphs have Ramsey number
larger than 30. All results were in complete agreement.

In the cases where we generated all Ramsey graphs of order $r$ for a
given testgraph, the results obtained by \textit{triangleramsey} and \textit{MTF} were
also in complete agreement.

For each Ramsey graph which was generated, we also used an independent
program to confirm that the Ramsey graph does not contain $G$ in its complement.

There are 34 graphs $G$ for which $R(K_3,G) > 30$. In
section~\ref{section_general_results}, we proved that $R(K_3,K_{10} -
T_{3+}) = R(K_3,K_{10} - K_{1,3} - e) = 31$. So the graphs with
$R(K_3,G) > 30$ which are a subgraph of $K_{10} - T_{3+}$ or $K_{10} -
K_{1,3} - e$ also have Ramsey number 31. In that section, we also
proved that $R(K_3, K_{10} - K_{1,s}) = 36$ (for $2\le s\le 9$). This
leaves us with 10 graphs with $R(K_3,G) > 30$ for which we were unable
to determine their exact Ramsey number. Also note that if $G^c$
contains a triangle and $H^c$ is the only graph which can be obtained
by removing an edge from that triangle of $G^c$, then $R(K_3,G) =
R(K_3,H)$. Thus among the 10 remaining graphs, $R(K_3,K_{10}-K_3-e) =
R(K_3,K_{10}-P_3-e)$ and $R(K_3,K_{10}-K_4) = R(K_3,K_{10}-K_4^-)$
(where $K_4^-$ stands for $K_4$ with 1 edge removed).  

Table~\ref{table:connected_graphs} contains the number of connected
graphs $G$ of order 10 which have $R(K_3,G)=r$. The triangle Ramsey
numbers of connected graphs of order 10 are given in
section~\ref{section_graph_figures}.

Previously only the Ramsey numbers for disconnected graphs of order at
most 8 were known (see~\cite{brinkmann_98}). We independently verified
these results for order 8 and also determined all Ramsey numbers for
disconnected graphs of order 9 and 10. These results are listed in
Table~\ref{table:disconnected_graphs}. The Ramsey numbers smaller than
28 were obtained computationally. We also independently confirmed the
computational results by using \textit{MTF}. The other Ramsey numbers were
obtained by some simple reasoning. More specifically, if a
disconnected graph is the union of 2 connected graphs $G_1$ and $G_2$
and $R(K_3, G_1) - |V(G_1)| \ge R(K_3, G_2)$, then $R(K_3, G_1 \cup
G_2) = R(K_3, G_1)$.

Unfortunately we did not succeed to compute new values of the functions $f(),g()$ and $h()$
given in Table~2 of \cite{brandt_98}. Nevertheless we could confirm all
the values given in Table~2 of \cite{brandt_98} with the new program.

\subsubsection*{Classical Ramsey numbers}
In 1992 McKay and Zhang~\cite{mckay_92} proved that $R(K_3,K_8) = 28$,
but the complete set of Ramsey graphs with 27 vertices for $K_8$ was
not yet known. Until now 430~215 such graphs were known (most of
these were generated by McKay).

We used \textit{triangleramsey} to compute all maximal triangle-free
Ramsey graphs with 27 vertices for $K_8$. This yielded 21~798 mtf
graphs. We also independently generated these Ramsey graphs with
\textit{MTF} and obtained the same results. We then recursively
removed edges in all possible ways from these mtf Ramsey graphs to
obtain the complete set of Ramsey graphs for $R(K_3,K_8)$ with 27
vertices. This yielded 477~142 Ramsey graphs. As a test we verified
that all of the 430~215 previously known Ramsey graphs are indeed
included in our list. Goedgebeur and Radziszowski~\cite{staszek_12}
generated all Ramsey graphs for $R(K_3,K_8)$ with 27 vertices and at
most 88 edges using an independent program and obtained the same
results. The list can be downloaded from~\cite{ramsey-data-site}.
Table~\ref{table:counts_r38} contains the counts of these graphs
according to their number of edges.

We did not construct the lists of all Ramsey graphs with less than 27
vertices for $R(K_3,K_8)$ as there are too much of these graphs to
store.


\begin{table}
\begin{center}
\begin{tabular}{| c || c |}
\hline 
Number of & Number of \\
edges     & Ramsey graphs\\
\hline 
85  &  4\\
86  &  92\\
87  &  1 374\\
88  &  11 915\\
89  &  52 807\\
90  &  122 419\\
91  &  151 308\\
92  &  99 332\\
93  &  33 145\\
94  &  4 746\\
\hline
\end{tabular}
\end{center}

\caption{Counts of all 477 142 Ramsey graphs with 27 vertices for $R(K_3, K_8)$ according to their number of edges.}

\label{table:counts_r38}
\end{table}


\begin{table}
\begin{center}
\small\addtolength{\tabcolsep}{-2pt}
\begin{tabular}{|c|c|c|c|c|c|c|c|c|}
\hline
 & $|H|=3$ & $|H|=4$ & $|H|=5$ & $|H|=6$ & $|H|=7$ & $|H|=8$ & $|H|=9$ & $|H|=10$ \\
\hline
$r=5$ & 1 & & & & &  &  & \\
\hline
$r=6$ &1 & & & & &  &  & \\
\hline
$r=7$ &  & 5 & & & &  &  & \\
\hline
$r=8$ &  &   & & & &  &  & \\
\hline
$r=9$ &  & 1 & 18 & & &  &  & \\
\hline
$r=10$ & &   &    & & &  &  & \\
\hline
$r=11$ & &   & 2  & 98 & &  &  & \\
\hline
$r=12$ & &   &    & 6  & &  &  & \\
\hline
$r=13$ & &   &    & 2  & 772 &  &  & \\
\hline
$r=14$ & &   & 1  & 4  &  40 &  &  & \\
\hline
$r=15$ & &   &    &    &     & 9 024  &  & \\
\hline
$r=16$ & &   &    &    &  13 & 1 440  &  & \\
\hline
$r=17$ & &   &    & 1  &  19 &  498  & 242 773  & \\
\hline
$r=18$ & &   &    & 1  &   7 &  119  & 16 024  & \\
\hline
$r=19$ & &   &    &    &     &       & 311  & 10 101 711\\
\hline
$r=20$ & &   &    &    &     &      &  & 504\\
\hline
$r=21$ & &   &    &    &   1 &  28    & 1 809  & 1 602 240\\
\hline
$r=22$ & &   &    &    &     &       & 22  & 3 155\\
\hline
$r=23$ & &   &    &    &   1 &  6    & 98  & 6 960 \\
\hline
$r=24$ & &   &    &    &     &       &  &   \\
\hline
$r=25$ & &   &    &    &     &  1    & 26  &  1 384 \\
\hline
$r=26$ & &   &    &    &     &       & 5  &  316 \\
\hline
$r=27$ & &   &    &    &     &       & 3  &  92 \\
\hline
$r=28$ & &   &    &    &     &  1    & 7  &  142 \\
\hline
$r=29$ & &   &    &    &     &       &  &   30\\
\hline
$r=30$ & &   &    &    &     &       &  &   3\\
\hline
$r=31$ & &   &    &    &     &       & 1  &  $\ge 16$ \\
\hline
$r=36$ & &   &    &    &     &       & 1  &  $\ge 8$ \\
\hline
\end{tabular}
\end{center} 

\caption{Numbers of connected graphs $H$ with Ramsey number $R(K_3,H)= r$. Note that the 10 graphs with 
$R(K_3,H) \ge 31$, but whose Ramsey number we were unable to determine are not included in the table.}

\label{table:connected_graphs}

\end{table}


\begin{table}
\begin{center}
\small\addtolength{\tabcolsep}{-2pt}
\begin{tabular}{|c|c|c|c|c|c|c|c|c|}
\hline
  & $|H|=3$ & $|H|=4$ & $|H|=5$ & $|H|=6$ & $|H|=7$ & $|H|=8$ & $|H|=9$ & $|H|=10$\\
\hline
$r=3$ & 2 & & & & & & & \\
\hline
$r=4$ & & 2 & & & & & & \\
\hline
$r=5$ & & 2 & 4 & & & & & \\
\hline
$r=6$ & & 1 & 3 & 7  & & & & \\
\hline
$r=7$ & &   & 5 & 11 & 18  & & & \\
\hline
$r=8$ & &   &   & 3  & 5   & 23  & & \\
\hline
$r=9$ &  &  & 1 & 20 & 50  & 60  & 83    & \\
\hline
$r=10$ & &  &   &    &     & 36  & 68    & 151 \\
\hline
$r=11$ & &  &   & 2  & 102 & 225 & 427   & 596 \\
\hline
$r=12$ & &  &   &    & 6   & 12  & 144   & 168 \\
\hline
$r=13$ & &  &   &    & 2   & 776 & 1 552 & 3 734\\
\hline
$r=14$ & &  &   & 1  & 6   & 52  & 107   & 447\\
\hline
$r=15$ & &  &   &    &     &     & 9 024 & 18 048\\
\hline
$r=16$ & &  &   &    &     & 13  & 1 466 & 2 933\\
\hline
$r=17$ & &  &   &    & 1   & 21  & 540   & 243 856\\
\hline
$r=18$ & &  &   &    & 1   & 9   & 137   & 16 301 \\
\hline
$r=19$ & &  &   &    &     &     &       & 311\\
\hline
$r=20$ & &  &   &    &     &     &       & \\
\hline
$r=21$ & &  &   &    &     &  1  & 30    & 1 869\\
\hline
$r=22$ & &  &   &    &     &     &       & 22\\
\hline
$r=23$ & &  &   &    &     &  1  & 8      & 114\\
\hline
$r=24$ & &  &   &    &     &     &        & \\
\hline
$r=25$ & &  &   &    &     &     &    1   & 28\\
\hline
$r=26$ & &  &   &    &     &     &        & 5\\
\hline
$r=27$ & &  &   &    &     &     &        & 3\\
\hline
$r=28$ & &  &   &    &     &     &    1    & 9\\
\hline
$r=31$ & &  &   &    &     &     &         & 1\\
\hline
$r=36$ & &  &   &    &     &     &         & 1\\
\hline
\end{tabular}
\end{center}

\caption{Numbers of disconnected graphs $H$ with Ramsey number $R(K_3,H)= r$.}

\label{table:disconnected_graphs}

\end{table}


\section{Closing remarks}

Since all computational results were independently obtained by both
\textit{MTF} and \textit{triangleramsey}, the chance of wrong results caused by errors in
the implementation is extremely small.

We believe that specialized algorithms and/or new theoretical results
will be required to determine the triangle Ramsey number of the
remaining 10 graphs and hope that this challenge to complete the
possibly last complete list of triangle Ramsey numbers for a very long time
will be taken up by the mathematical community..

Besides the already mentioned property that $n=10$ is the first case where
the Ramsey numbers of  $R(K_3,K_n-(m\cdot e))$ are not 
the same for all $2\le m \le n/2$, the most striking observation in the
list is possibly that while for $7 \le n \le 9$ the graph 
$K_n - P_5$ has a smaller Ramsey number than
$K_n - 2P_3$, for $n = 10$ they have the same Ramsey number
(i.e.\ 29).

The latest version of \textit{triangleramsey} can be downloaded from~\cite{triangleramsey-site}. The list of
the Ramsey graphs used in this research can be obtained
from \textit{House of Graphs}~\cite{HOG} by searching for the keywords ``ramsey * order 10'' and the Ramsey numbers can be obtained from~\cite{ramseynumber-site}.

\subsection*{Acknowledgements}

This work was carried out using the Stevin Supercomputer Infrastructure at Ghent University.
Jan Goedgebeur is supported by a PhD grant from the Research Foundation of Flanders (FWO).


\newpage
\section{The triangle Ramsey number for connected graphs of order 10}
\label{section_graph_figures}

\setlength{\parindent}{0cm} 

The following 10 graphs $H^c$ have  $R(K_3,H) > 30$, but we were unable to determine their Ramsey number. Graphs which must have the same Ramsey number are grouped by $\lfloor$ and $\rfloor$.

\setlength{\unitlength}{1cm}
\begin{minipage}[t]{2.2cm}
\begin{picture}(1.4,1.8)
\leavevmode
\epsfxsize=1.4cm
\epsffile{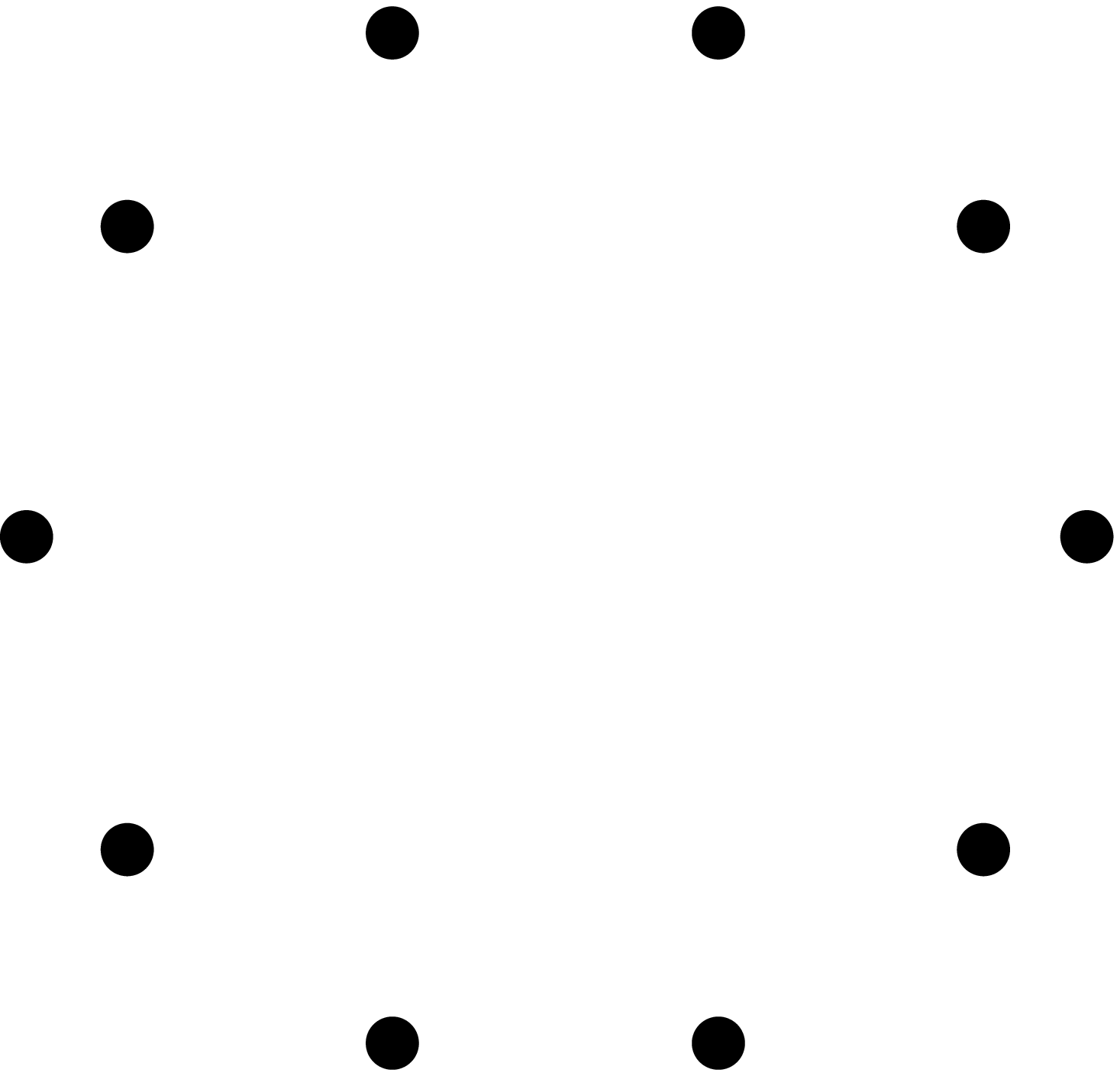}
\end{picture}\par
\end{minipage}
\begin{minipage}[t]{2.2cm}
\begin{picture}(1.4,1.8)
\leavevmode
\epsfxsize=1.4cm
\epsffile{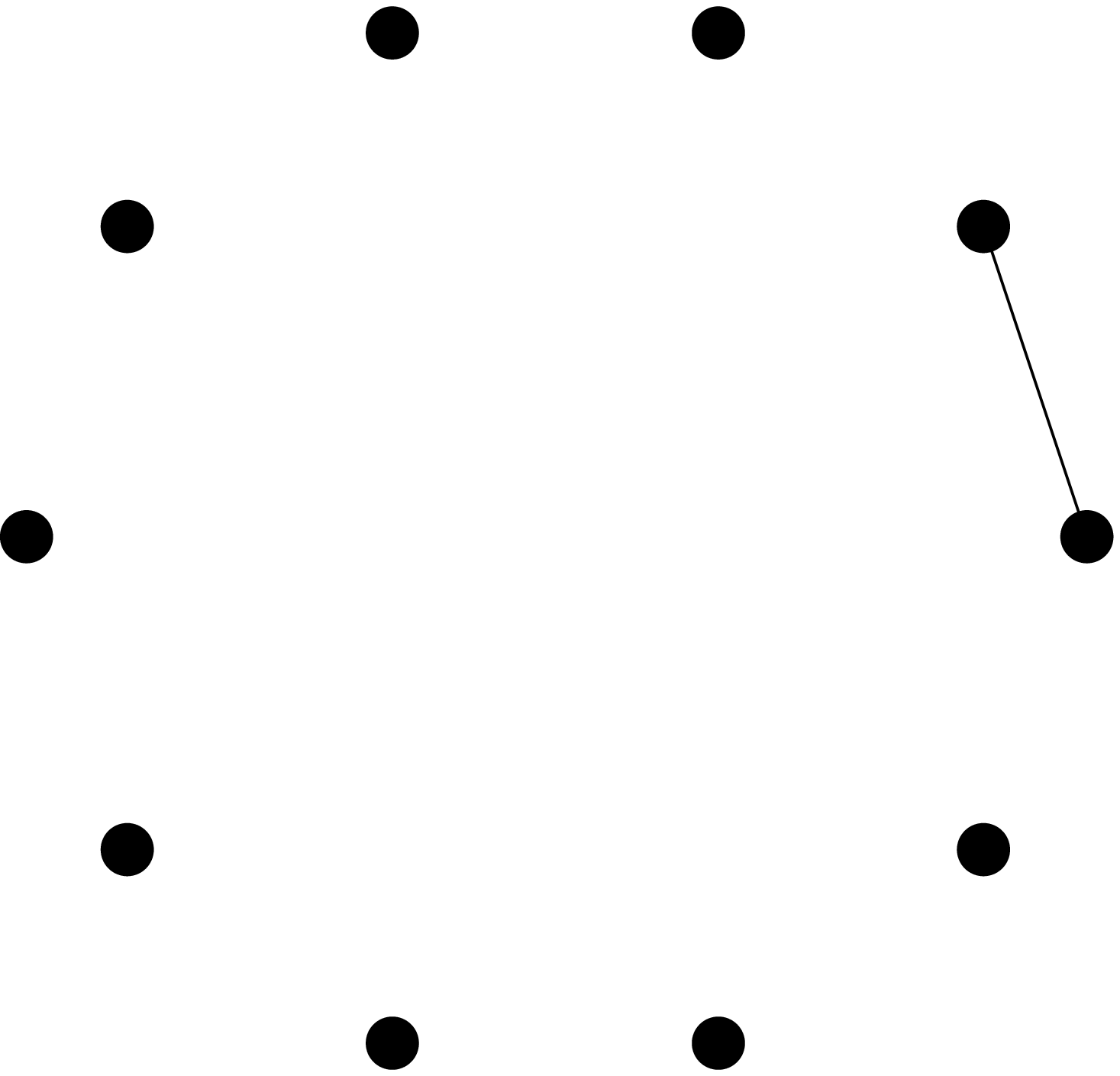}
\end{picture}\par
\end{minipage}
\begin{minipage}[t]{2.2cm}
\begin{picture}(1.4,1.8)
\leavevmode
\epsfxsize=1.4cm
\epsffile{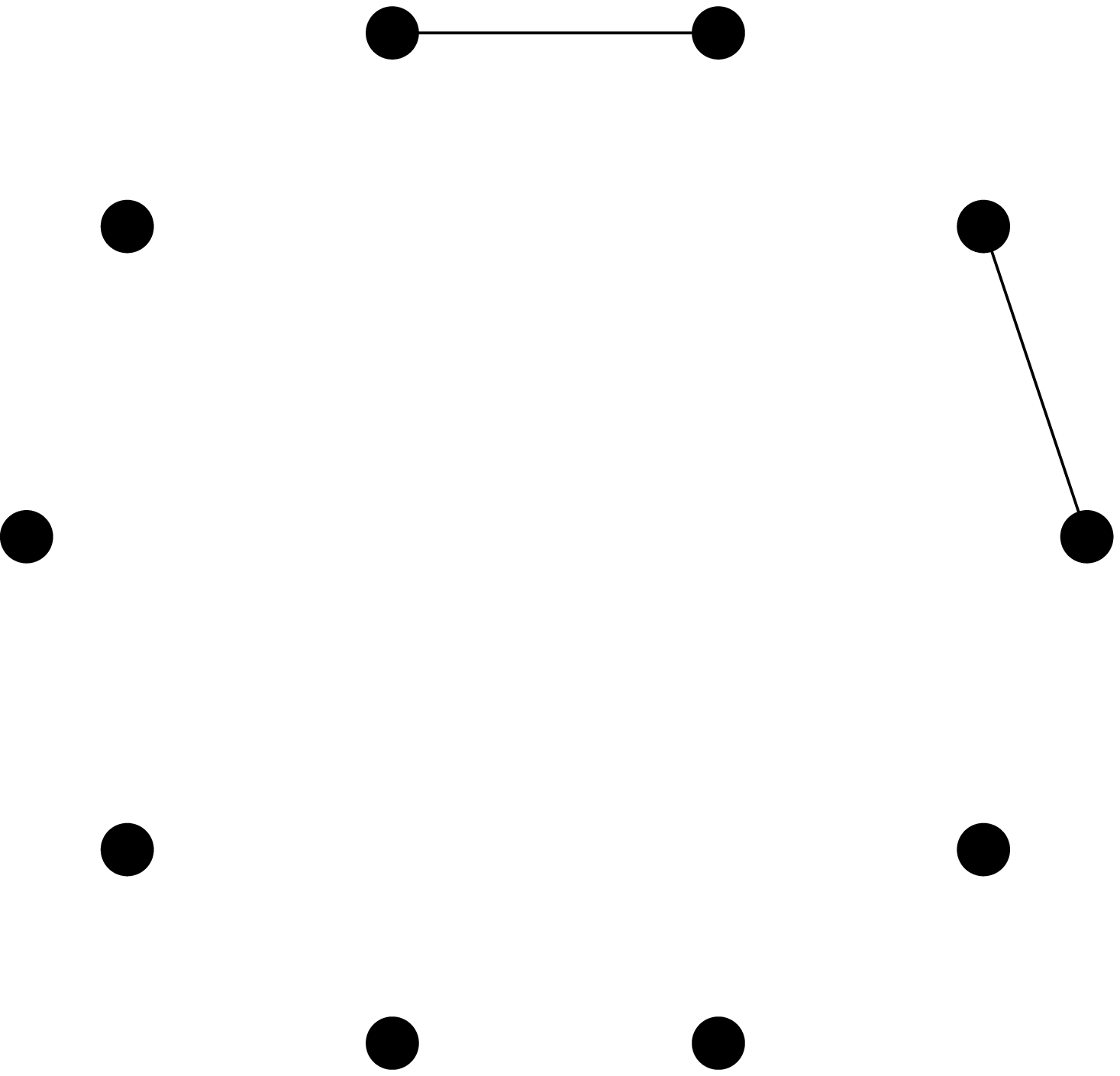}
\end{picture}\par
\end{minipage}
$\lfloor$
\begin{minipage}[t]{2.2cm}
\begin{picture}(1.4,1.8)
\leavevmode
\epsfxsize=1.4cm
\epsffile{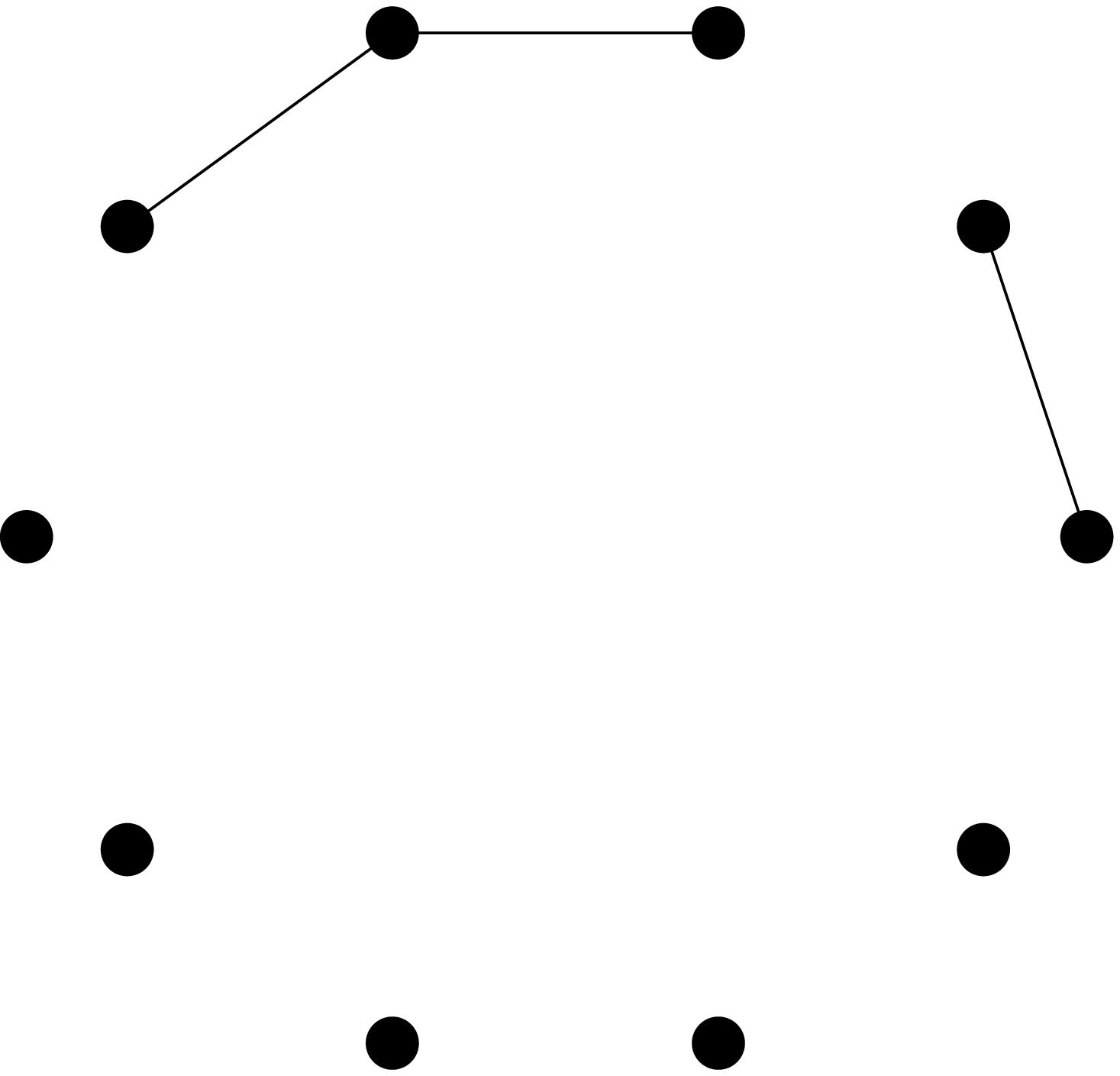}
\end{picture}\par
\end{minipage}
\begin{minipage}[t]{2.2cm}
\begin{picture}(1.4,1.8)
\leavevmode
\epsfxsize=1.4cm
\epsffile{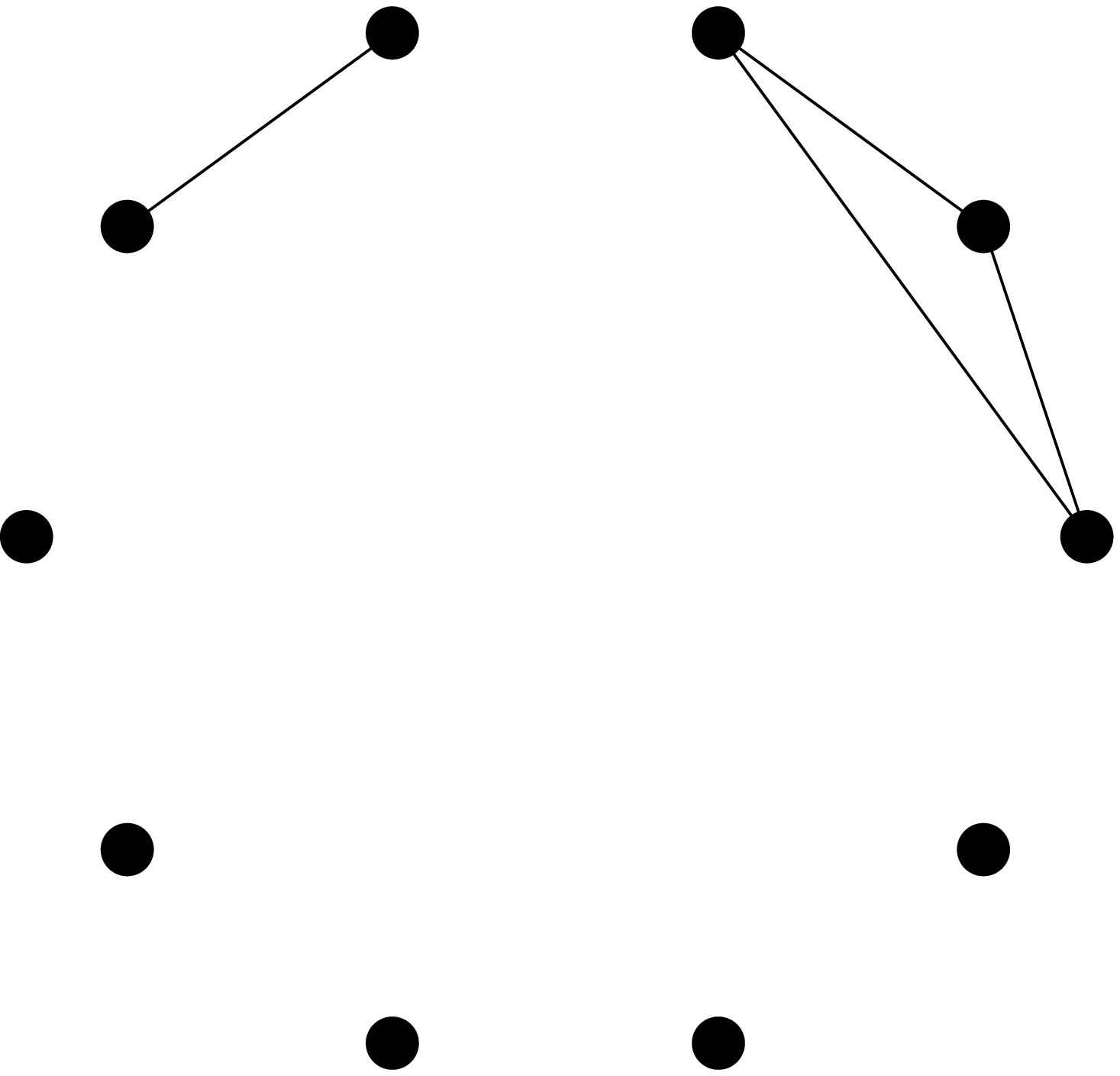}
\end{picture}$\rfloor$\par
\end{minipage}
\begin{minipage}[t]{2.2cm}
\begin{picture}(1.4,1.8)
\leavevmode
\epsfxsize=1.4cm
\epsffile{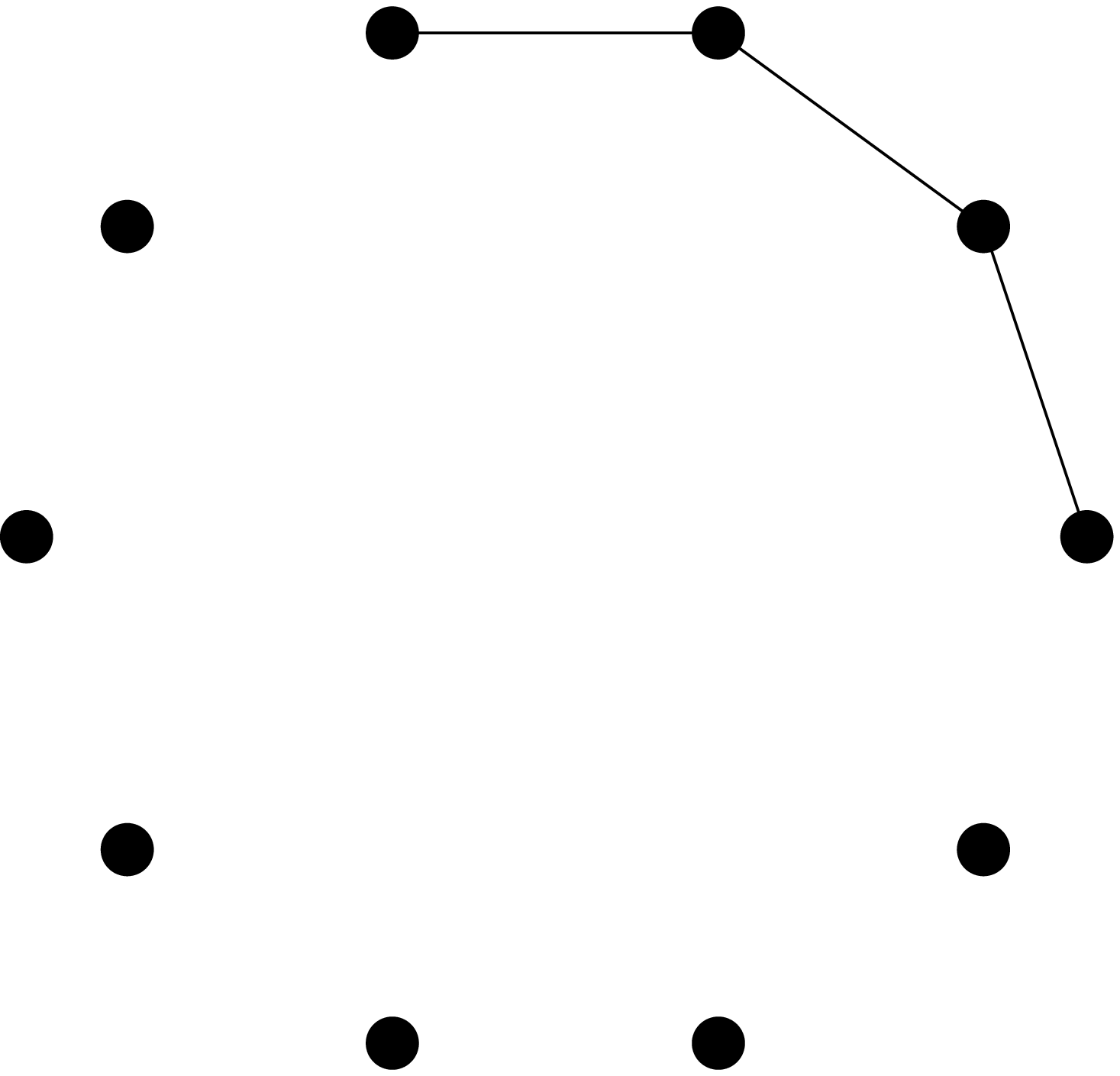}
\end{picture}\par
\end{minipage}
\begin{minipage}[t]{2.2cm}
\begin{picture}(1.4,1.8)
\leavevmode
\epsfxsize=1.4cm
\epsffile{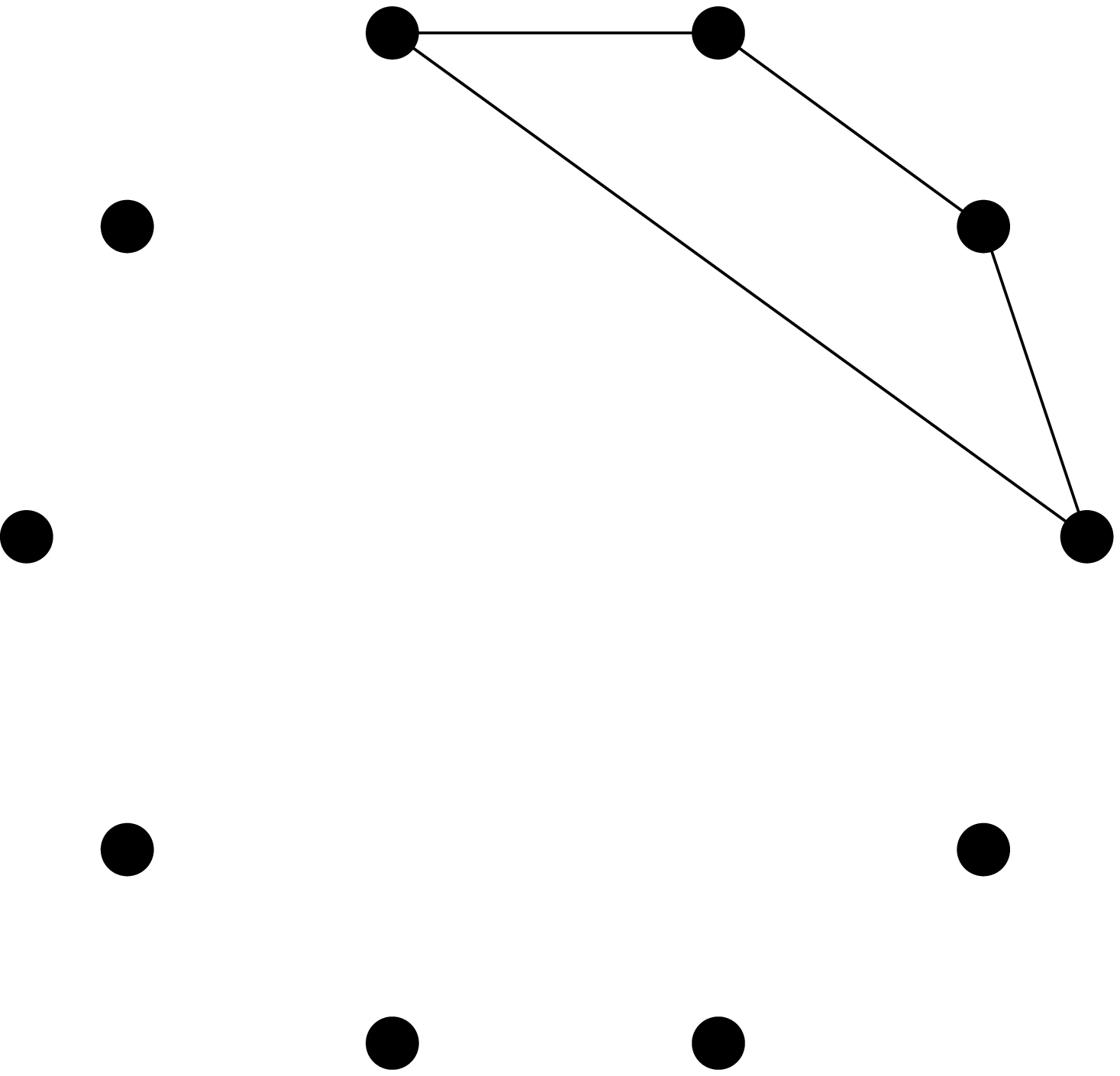}
\end{picture}\par
\end{minipage}
\begin{minipage}[t]{2.2cm}
\begin{picture}(1.4,1.8)
\leavevmode
\epsfxsize=1.4cm
\epsffile{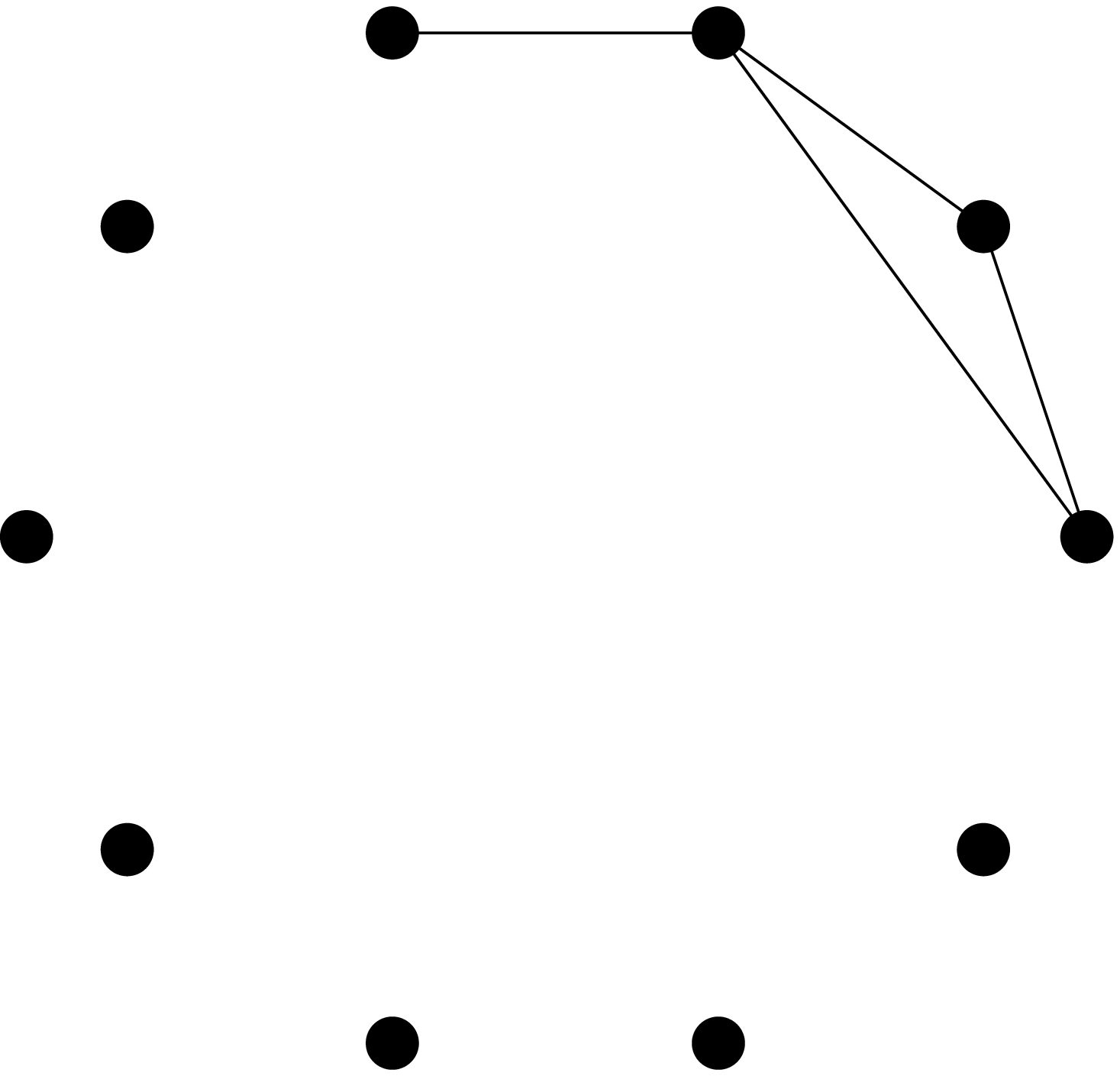}
\end{picture}\par
\end{minipage}
$\lfloor$
\begin{minipage}[t]{2.2cm}
\begin{picture}(1.4,1.8)
\leavevmode
\epsfxsize=1.4cm
\epsffile{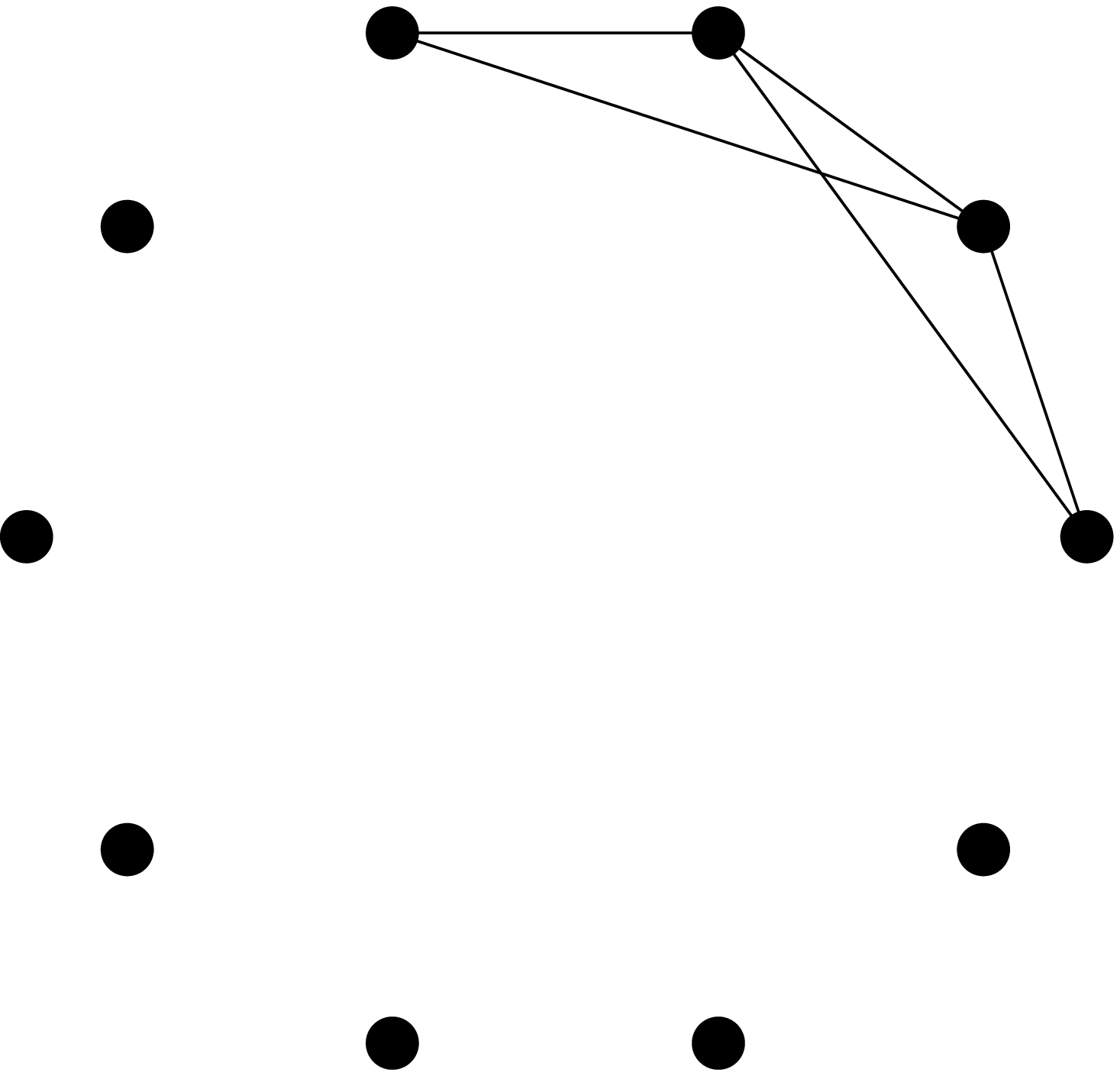}
\end{picture}\par
\end{minipage}
\begin{minipage}[t]{2.2cm}
\begin{picture}(1.4,1.8)
\leavevmode
\epsfxsize=1.4cm
\epsffile{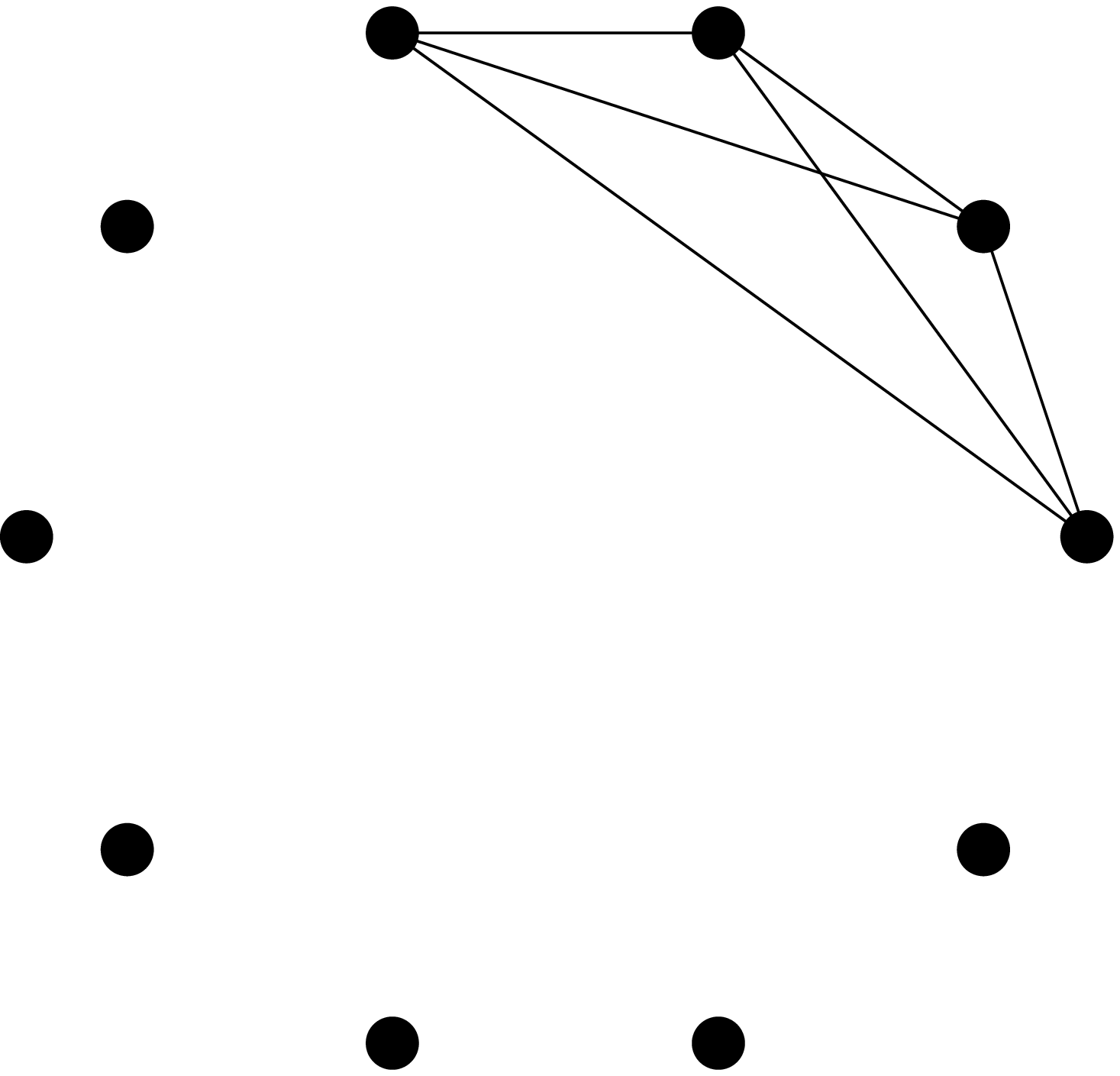}
\end{picture}$\rfloor$\par
\end{minipage}

\bigskip
\hrule
\smallskip
\hrule
\bigskip

If $H^c$ is one of the graphs:

\setlength{\unitlength}{1cm}
\begin{minipage}[t]{2.2cm}
\begin{picture}(1.4,1.8)
\leavevmode
\epsfxsize=1.4cm
\epsffile{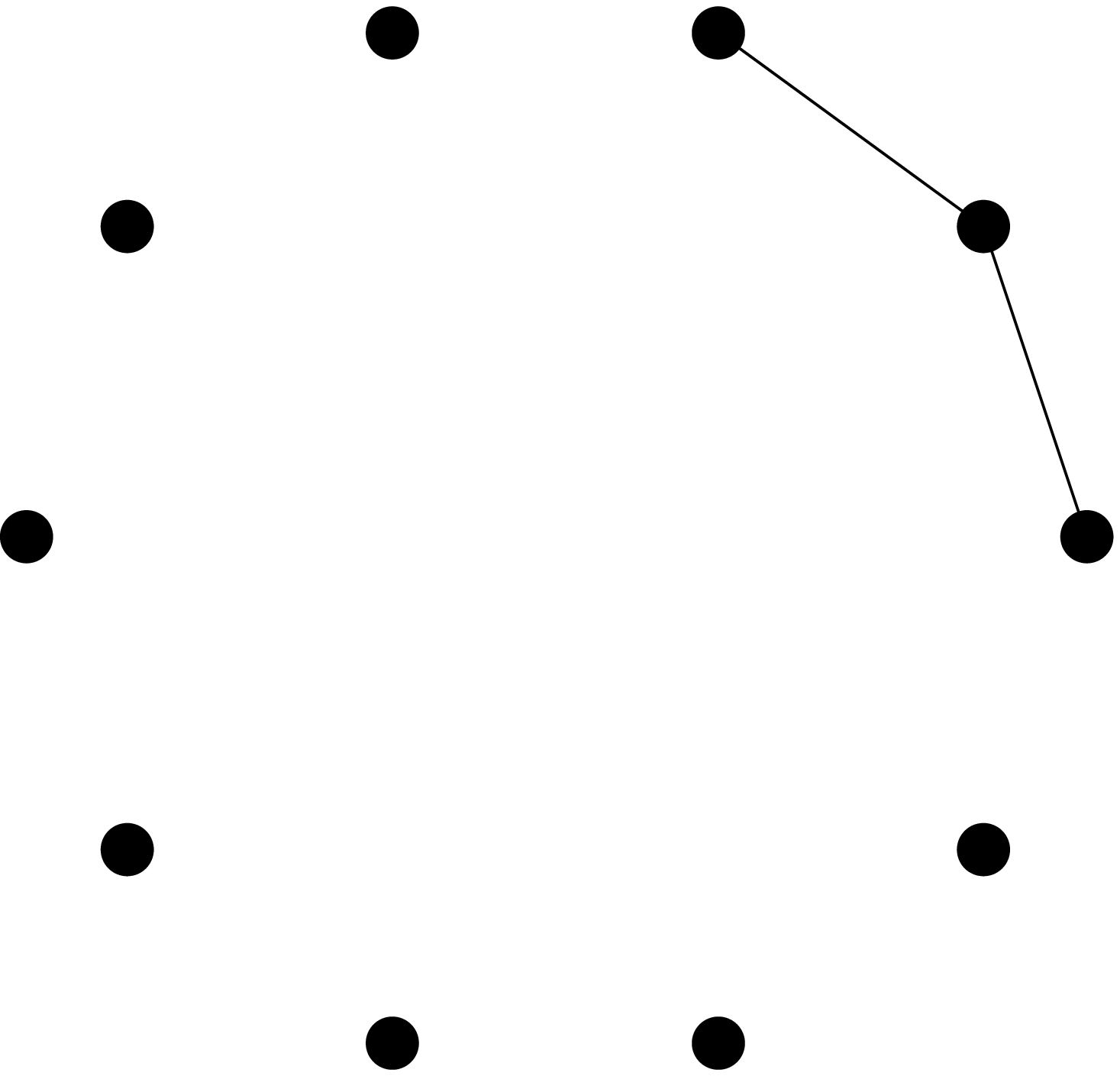}
\end{picture}\par
\end{minipage}
\begin{minipage}[t]{2.2cm}
\begin{picture}(1.4,1.8)
\leavevmode
\epsfxsize=1.4cm
\epsffile{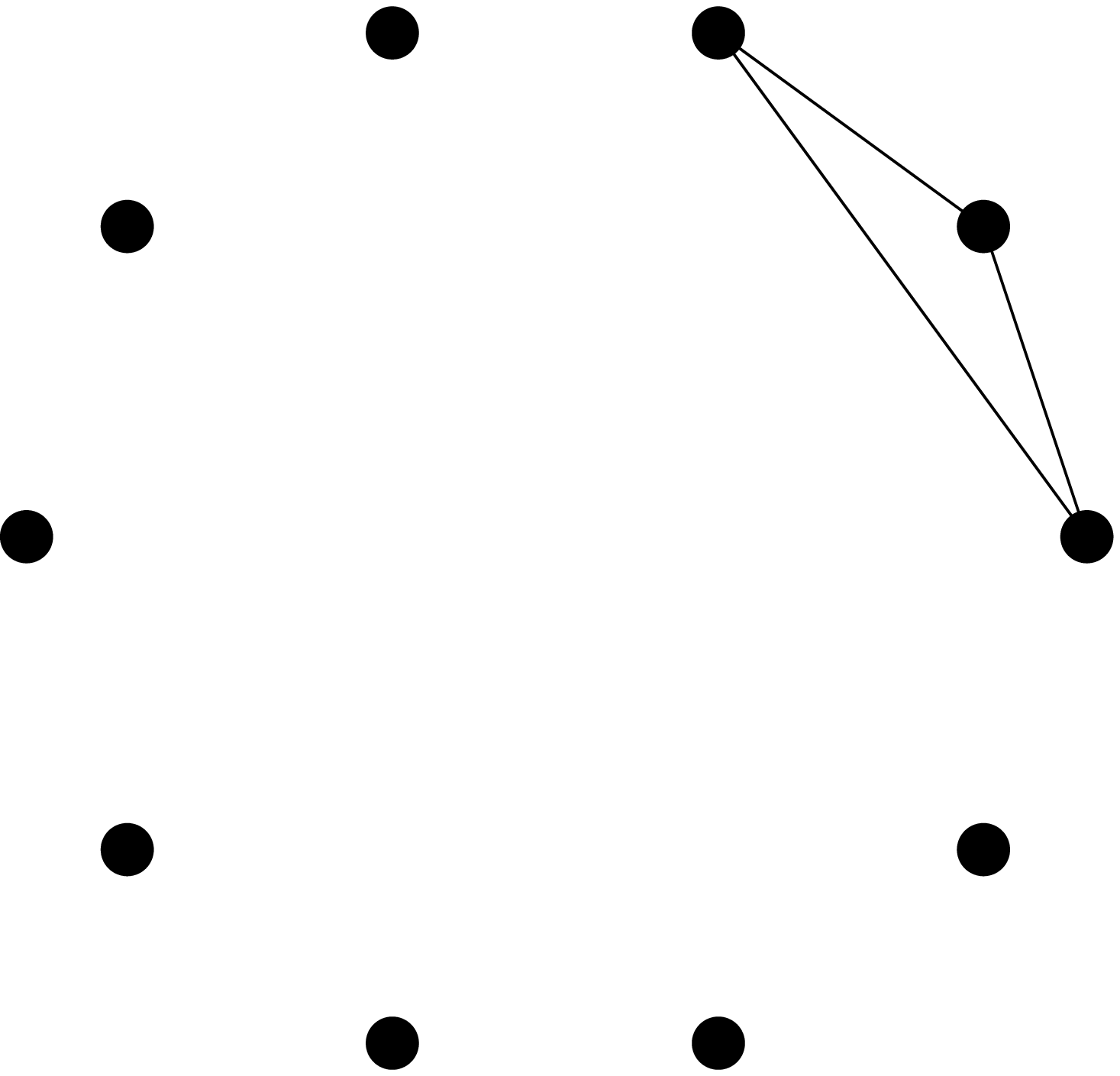}
\end{picture}\par
\end{minipage}
\begin{minipage}[t]{2.2cm}
\begin{picture}(1.4,1.8)
\leavevmode
\epsfxsize=1.4cm
\epsffile{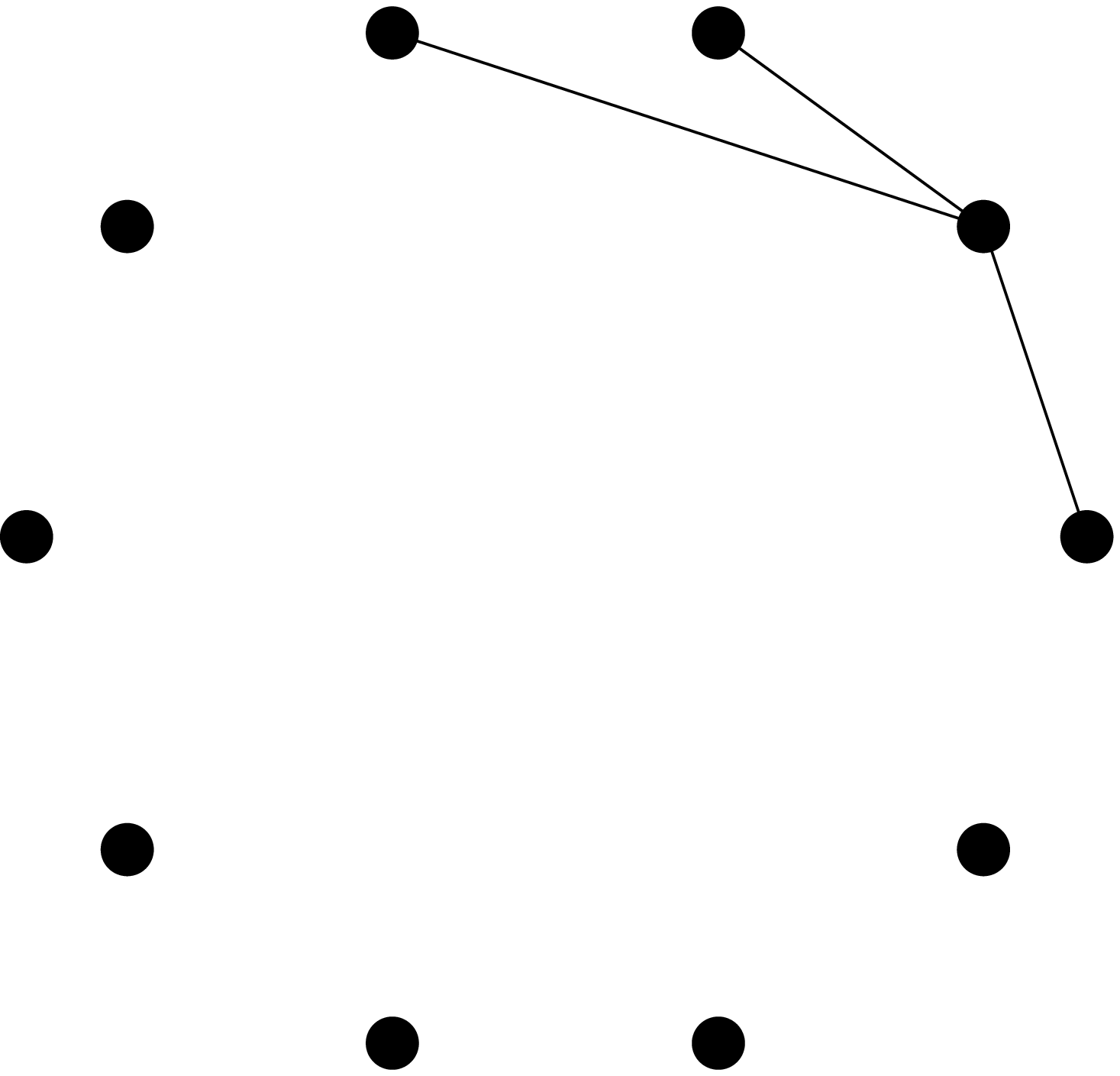}
\end{picture}\par
\end{minipage}
\begin{minipage}[t]{2.2cm}
\begin{picture}(1.4,1.8)
\leavevmode
\epsfxsize=1.4cm
\epsffile{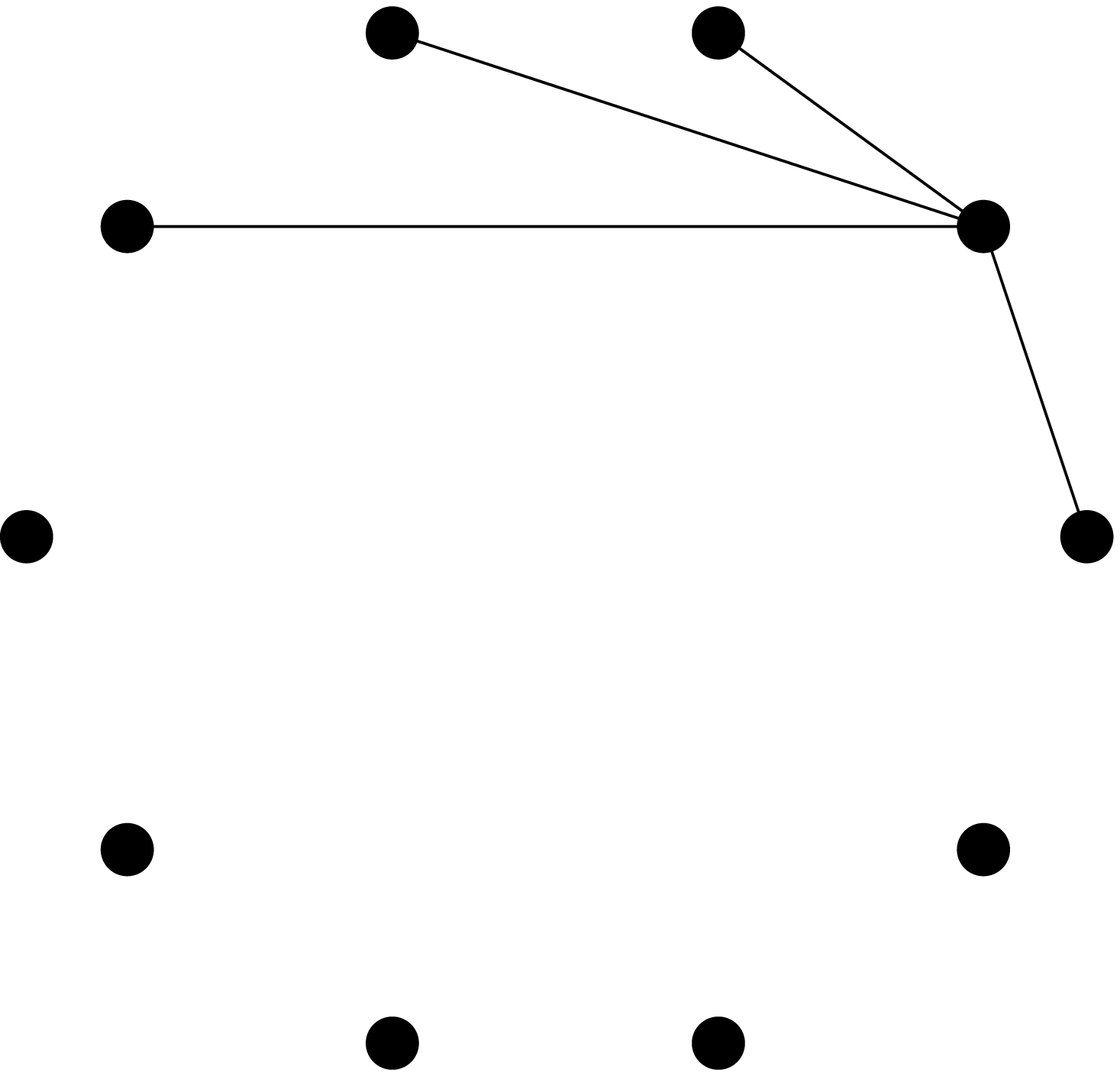}
\end{picture}\par
\end{minipage}
\begin{minipage}[t]{2.2cm}
\begin{picture}(1.4,1.8)
\leavevmode
\epsfxsize=1.4cm
\epsffile{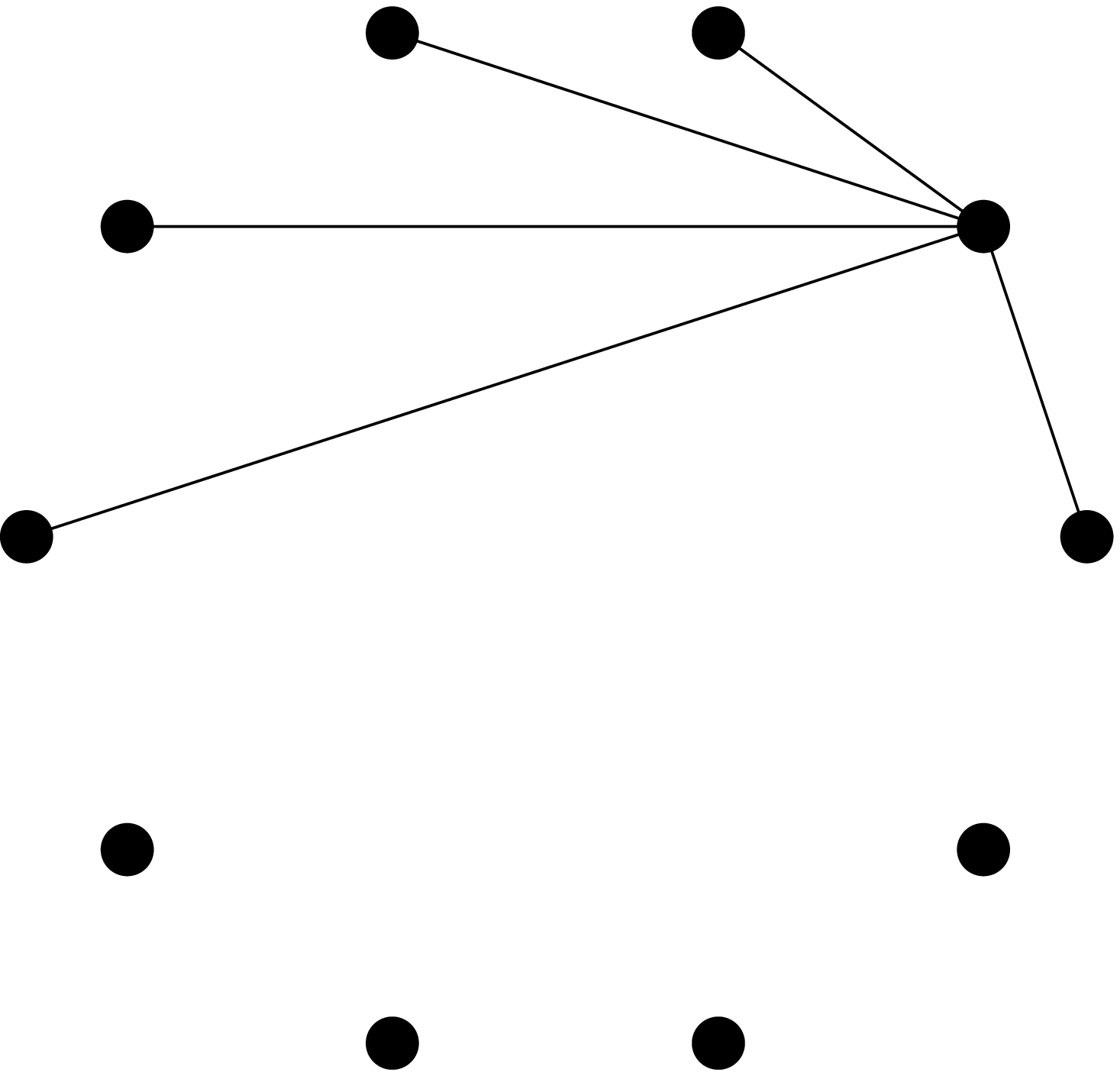}
\end{picture}\par
\end{minipage}
\begin{minipage}[t]{2.2cm}
\begin{picture}(1.4,1.8)
\leavevmode
\epsfxsize=1.4cm
\epsffile{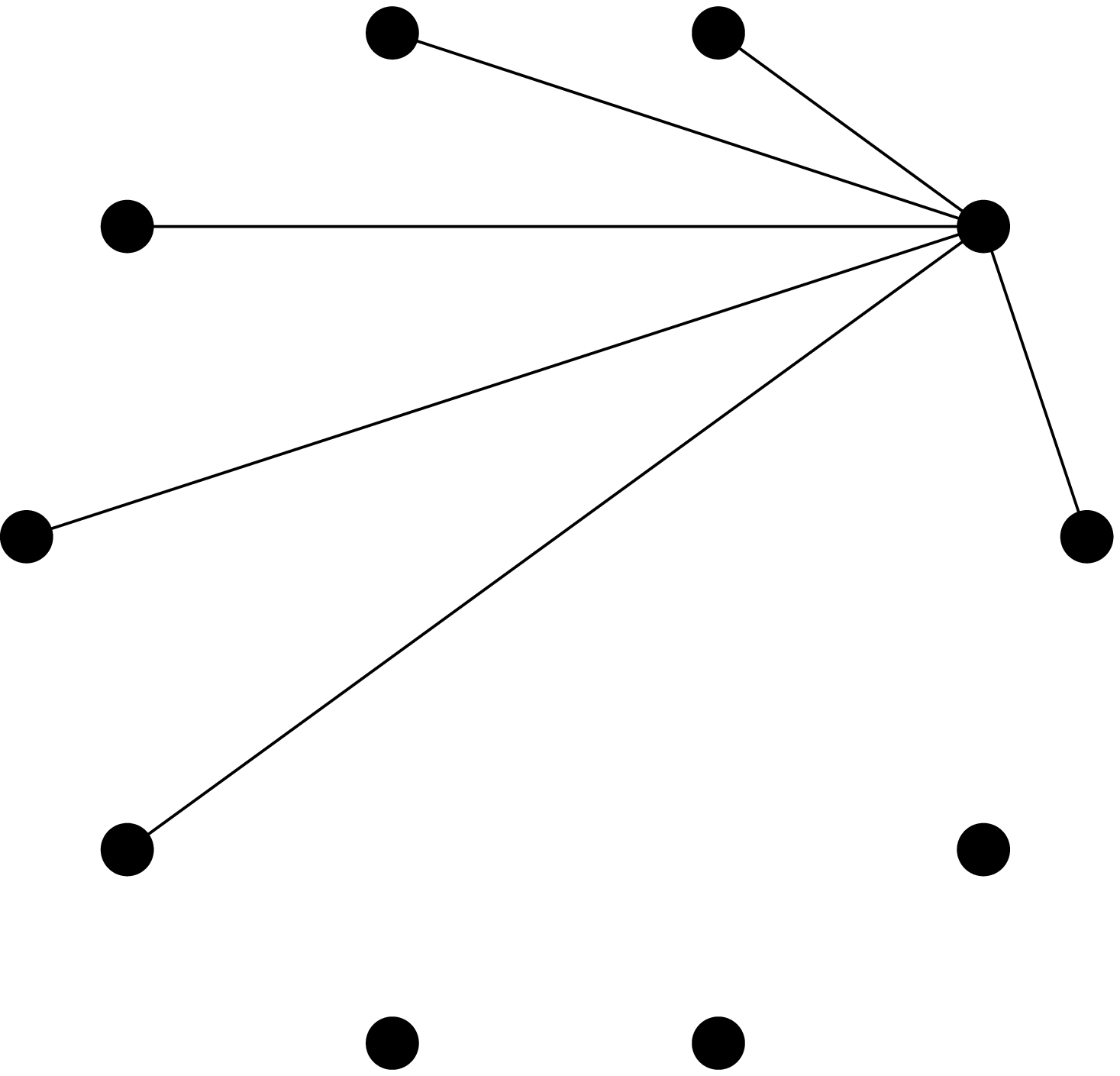}
\end{picture}\par
\end{minipage}
\begin{minipage}[t]{2.2cm}
\begin{picture}(1.4,1.8)
\leavevmode
\epsfxsize=1.4cm
\epsffile{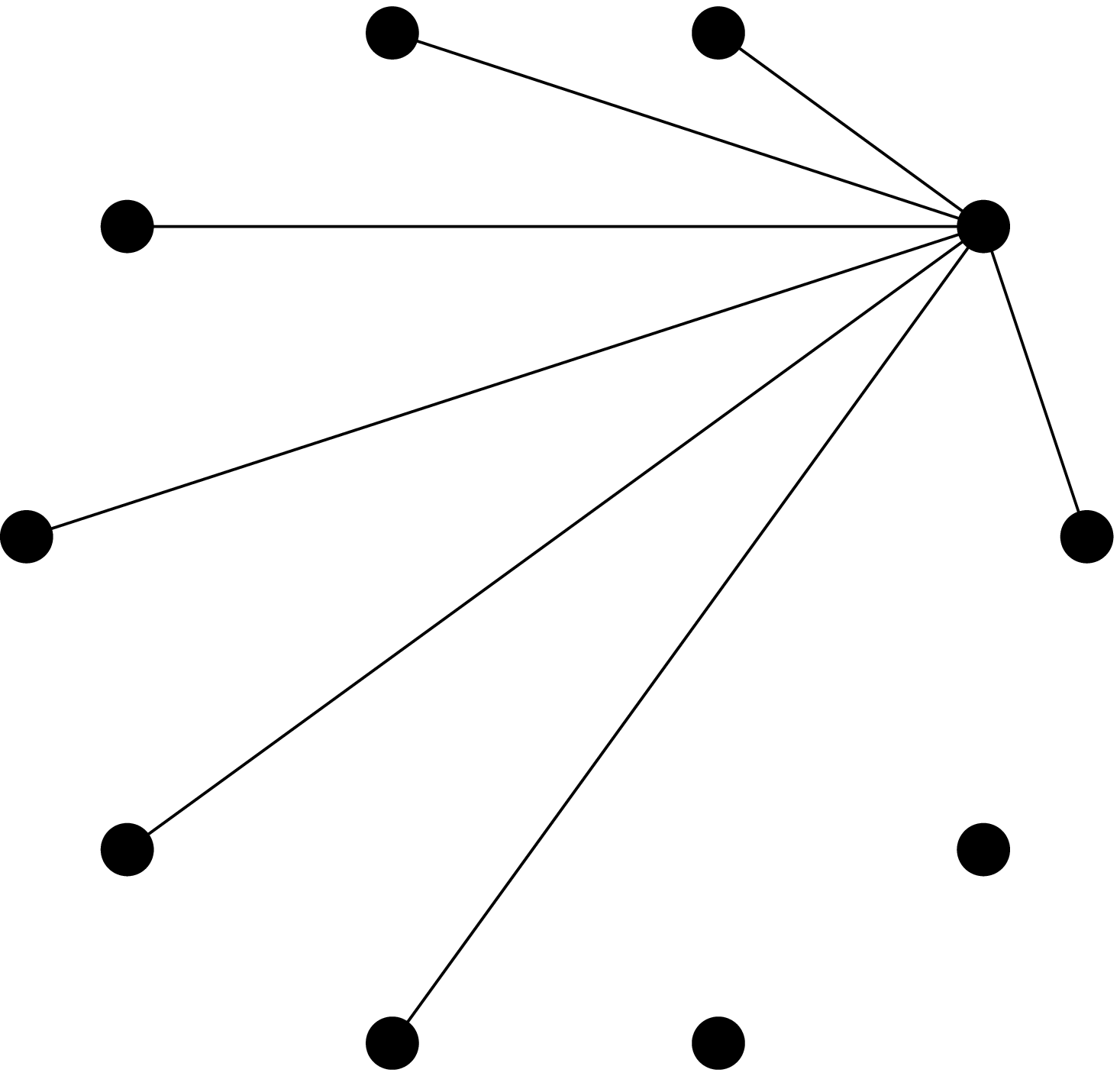}
\end{picture}\par
\end{minipage}
\begin{minipage}[t]{2.2cm}
\begin{picture}(1.4,1.8)
\leavevmode
\epsfxsize=1.4cm
\epsffile{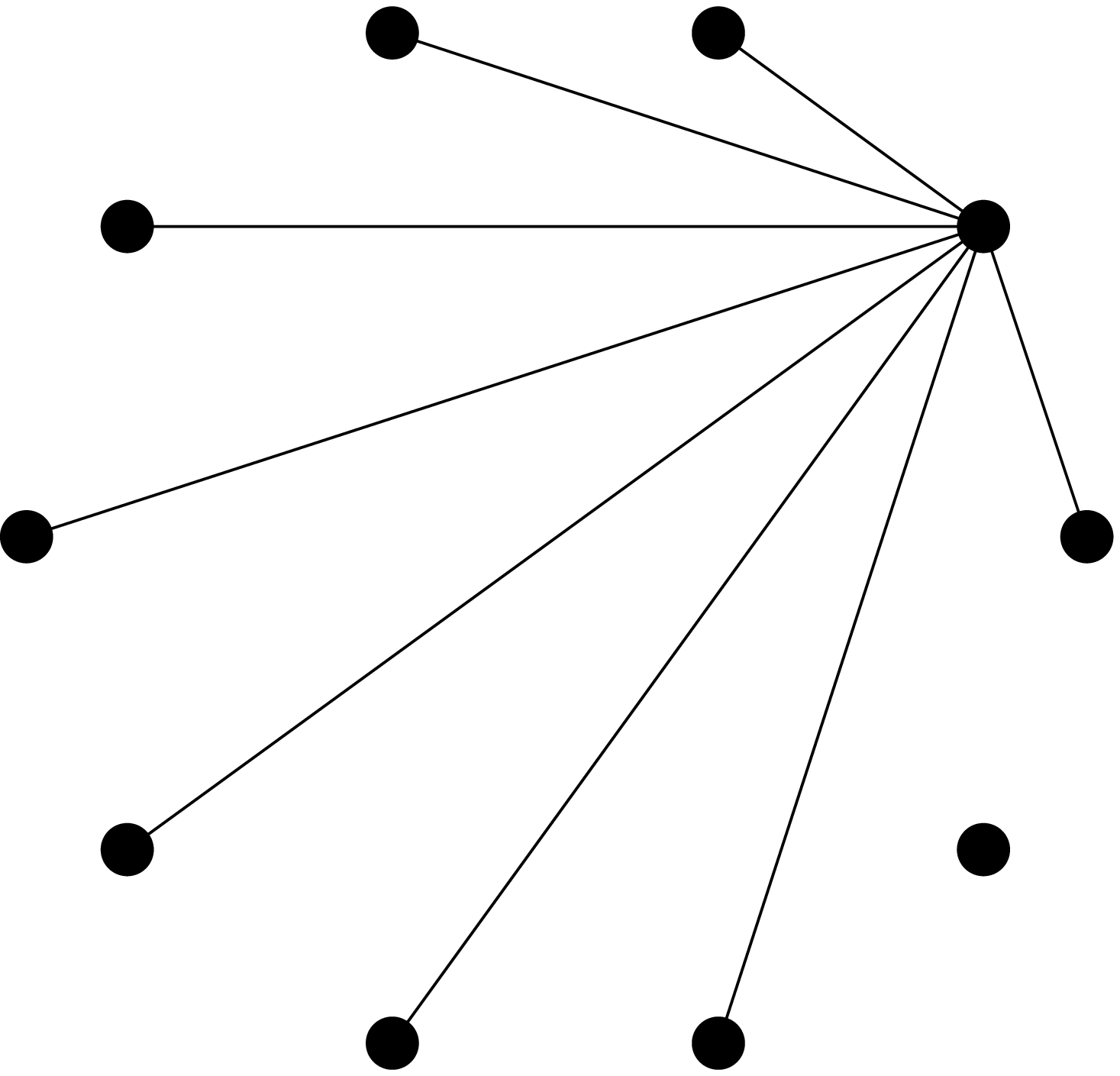}
\end{picture}\par
\end{minipage}

\medskip

Then $R(K_3,H) = 36$. These are possibly not \textbf{all} graphs with $R(K_3,H) = 36$.

\bigskip
\hrule
\bigskip

If $H^c$ is contained in one of the graphs:

\setlength{\unitlength}{1cm}
\begin{minipage}[t]{2.2cm}
\begin{picture}(1.4,1.8)
\leavevmode
\epsfxsize=1.4cm
\epsffile{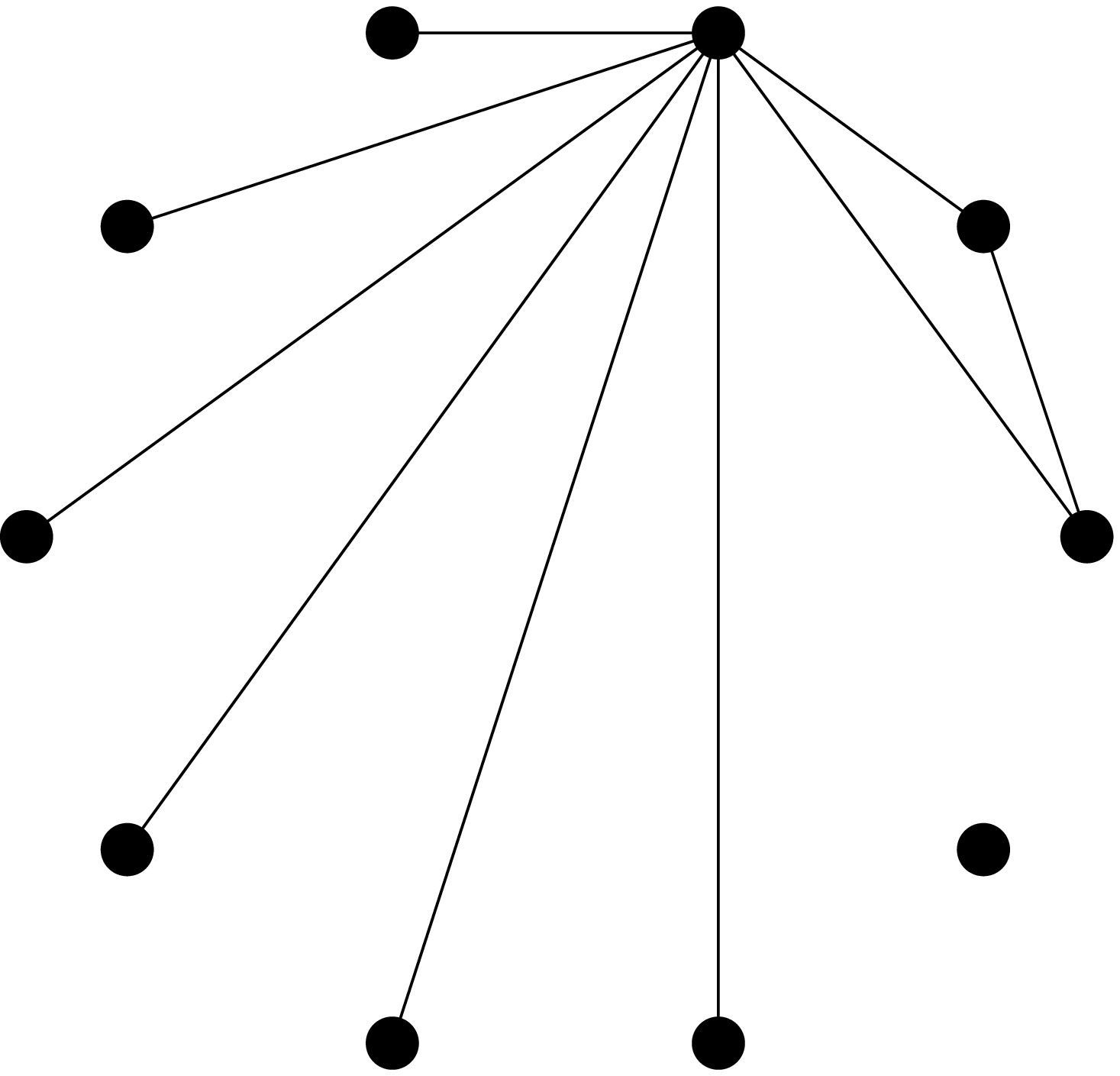}
\end{picture}\par
\end{minipage}
\begin{minipage}[t]{2.2cm}
\begin{picture}(1.4,1.8)
\leavevmode
\epsfxsize=1.4cm
\epsffile{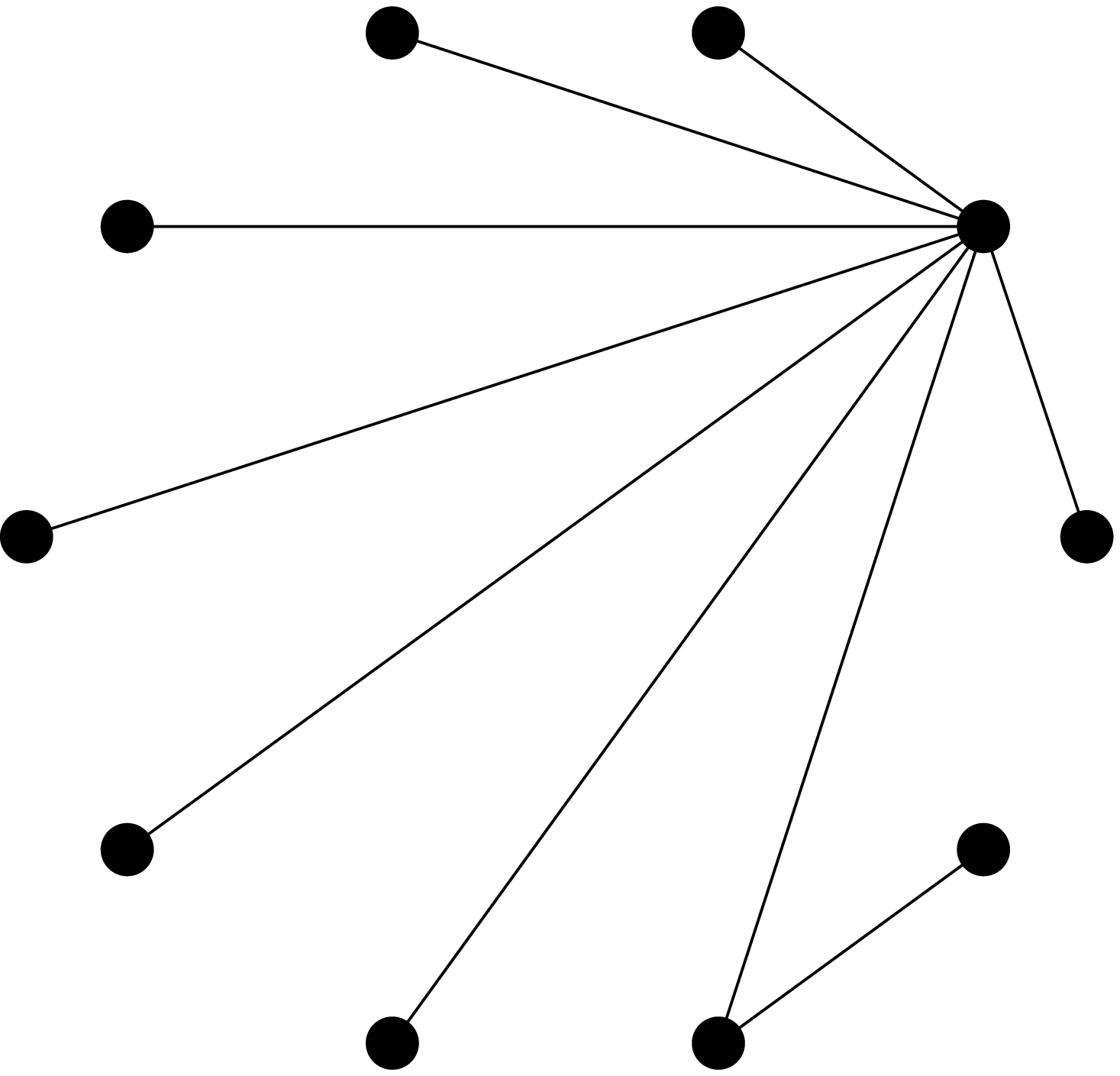}
\end{picture}\par
\end{minipage}

\bigskip
and contains one of the graphs:

\setlength{\unitlength}{1cm}
\begin{minipage}[t]{2.2cm}
\begin{picture}(1.4,1.8)
\leavevmode
\epsfxsize=1.4cm
\epsffile{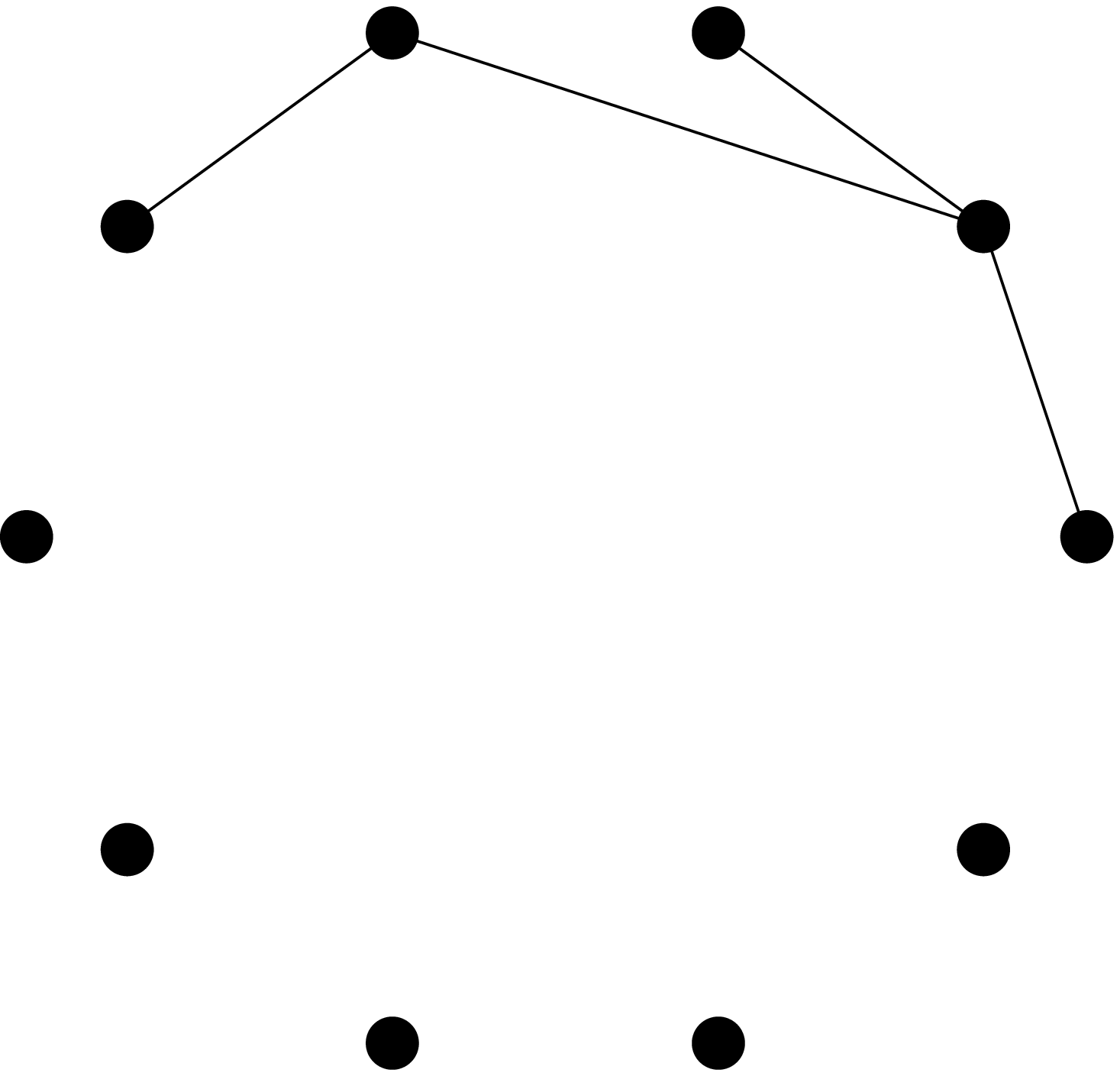}
\end{picture}\par
\end{minipage}
\begin{minipage}[t]{2.2cm}
\begin{picture}(1.4,1.8)
\leavevmode
\epsfxsize=1.4cm
\epsffile{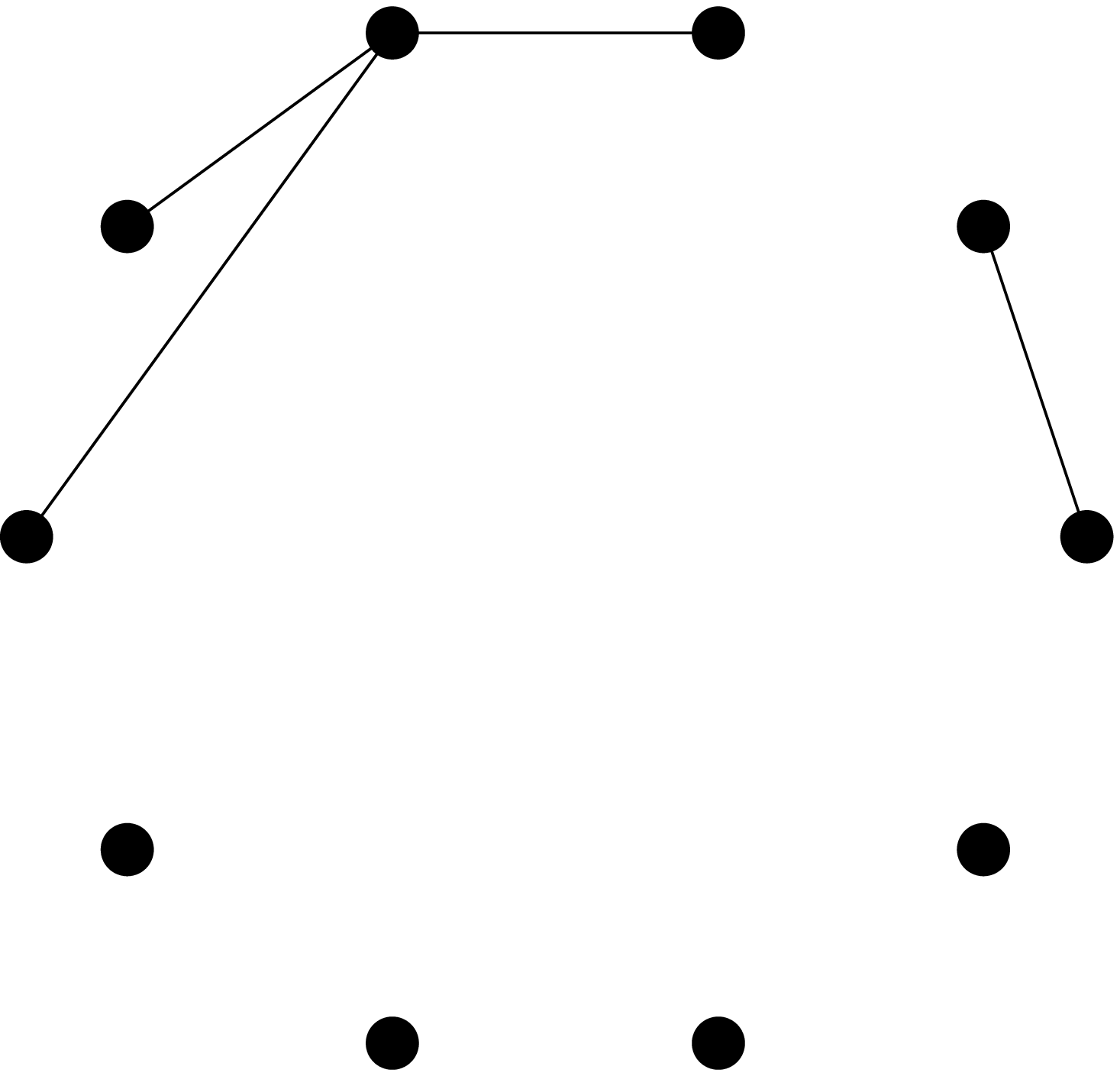}
\end{picture}\par
\end{minipage}

\medskip

Then $R(K_3,H) = 31$. These are possibly not \textbf{all} graphs with $R(K_3,H) = 31$.

\bigskip
\hrule
\smallskip
\hrule
\bigskip

\newpage

$R(K_3,H) = 30$ if and only if $H^c$ is contained in:

\setlength{\unitlength}{1cm}
\begin{minipage}[t]{2.0cm}
\begin{picture}(1.4,1.8)
\leavevmode
\epsfxsize=1.4cm
\epsffile{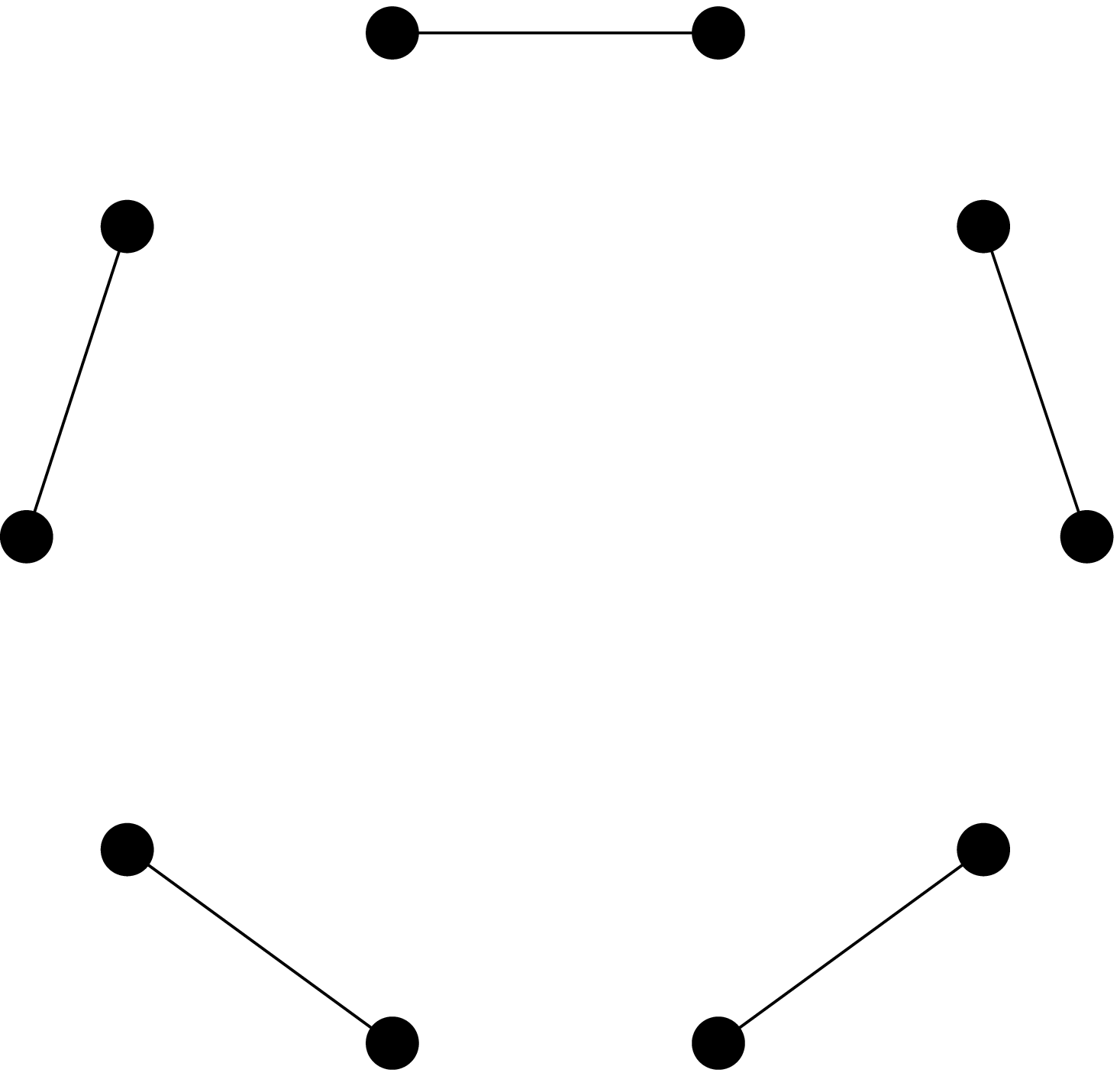}
\end{picture}\par
\end{minipage}

\bigskip
and contains:

\setlength{\unitlength}{1cm}
\begin{minipage}[t]{2.2cm}
\begin{picture}(1.4,1.8)
\leavevmode
\epsfxsize=1.4cm
\epsffile{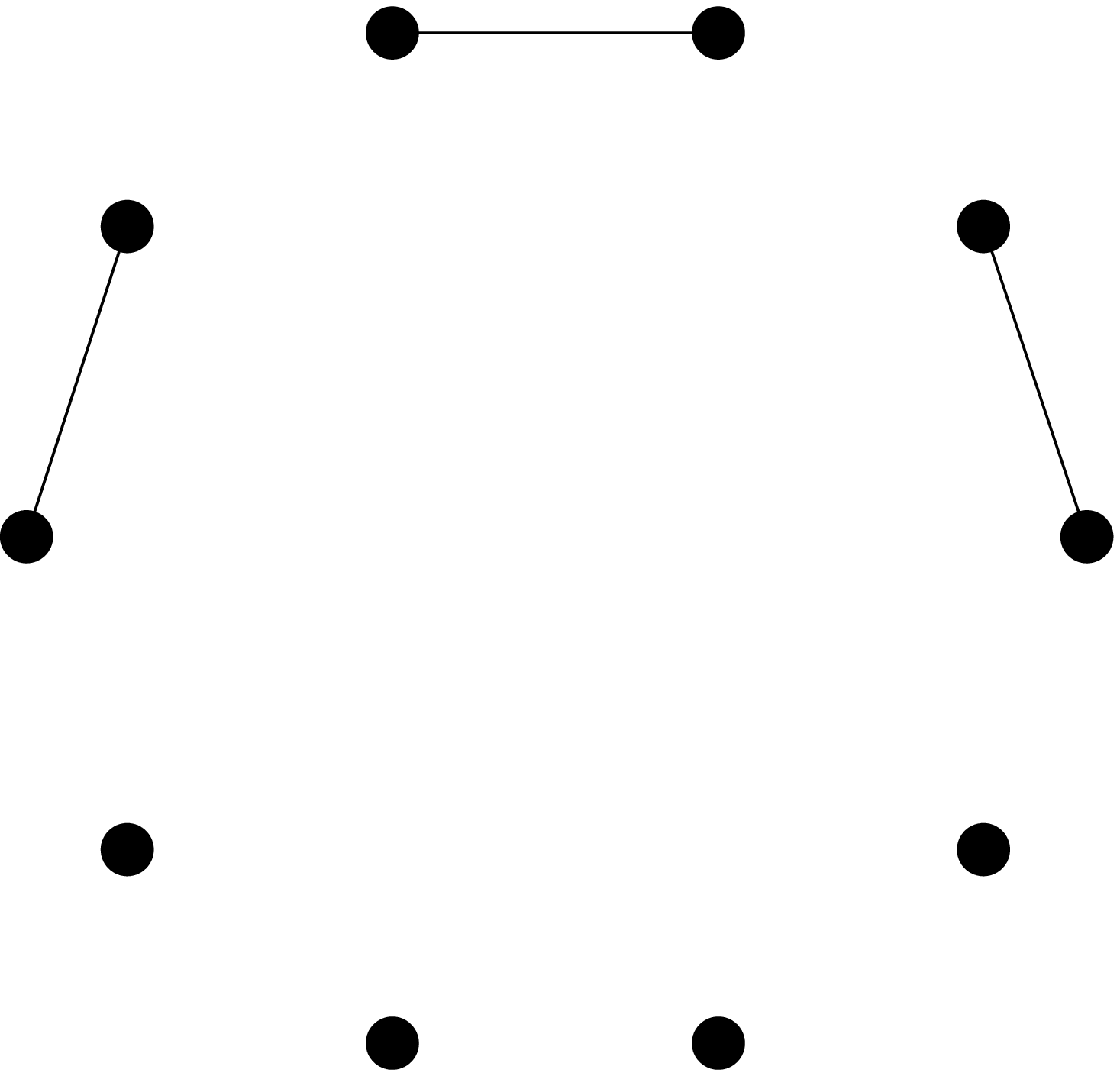}
\end{picture}\par
\end{minipage}

\bigskip
\hrule
\bigskip

$R(K_3,H) = 29$ if and only if $H^c$ is contained in one of the graphs:

\setlength{\unitlength}{1cm}
\begin{minipage}[t]{2.2cm}
\begin{picture}(1.4,1.8)
\leavevmode
\epsfxsize=1.4cm
\epsffile{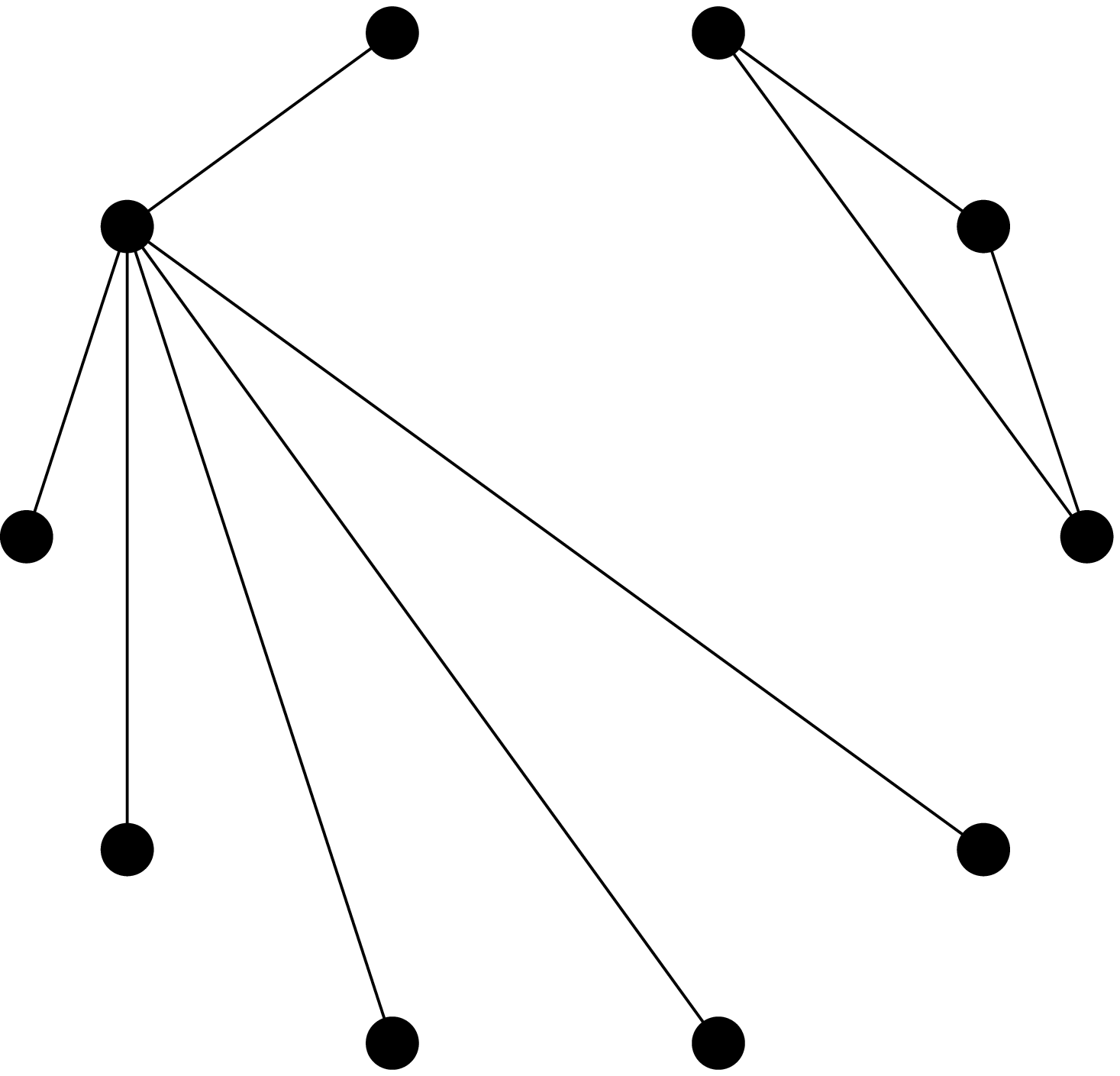}
\end{picture}\par
\end{minipage}
\begin{minipage}[t]{2.2cm}
\begin{picture}(1.4,1.8)
\leavevmode
\epsfxsize=1.4cm
\epsffile{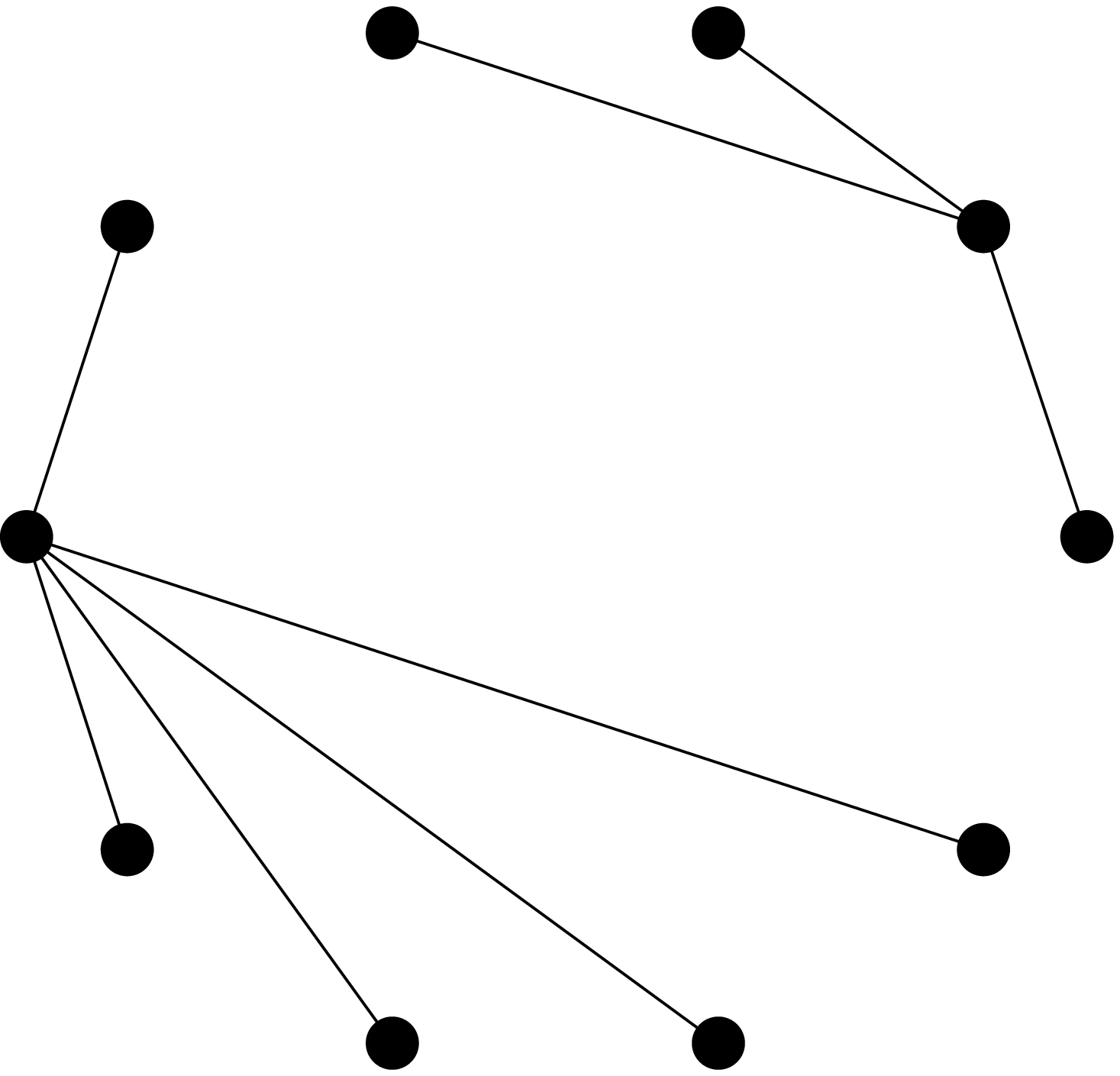}
\end{picture}\par
\end{minipage}
\begin{minipage}[t]{2.2cm}
\begin{picture}(1.4,1.8)
\leavevmode
\epsfxsize=1.4cm
\epsffile{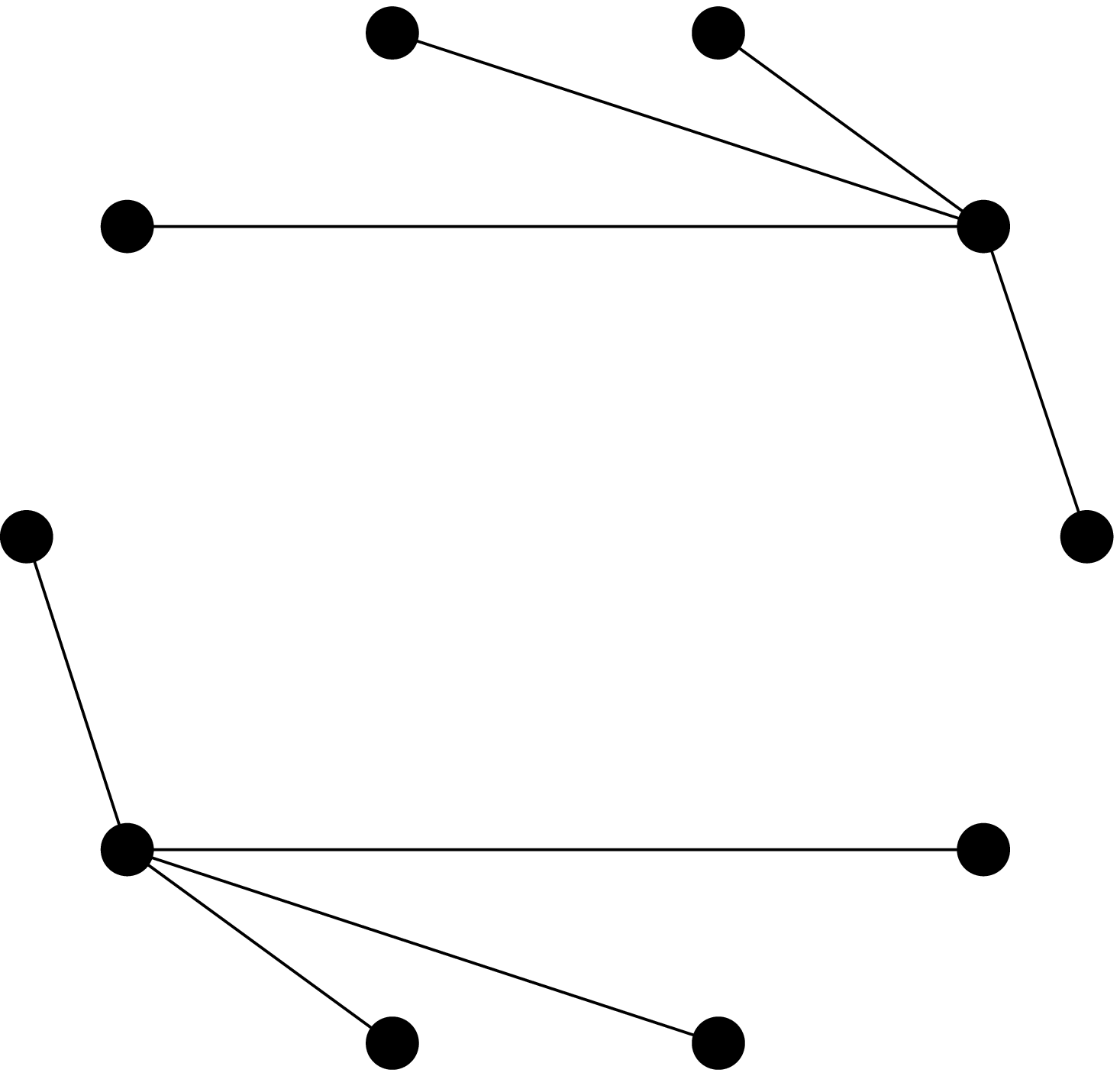}
\end{picture}\par
\end{minipage}
\begin{minipage}[t]{2.2cm}
\begin{picture}(1.4,1.8)
\leavevmode
\epsfxsize=1.4cm
\epsffile{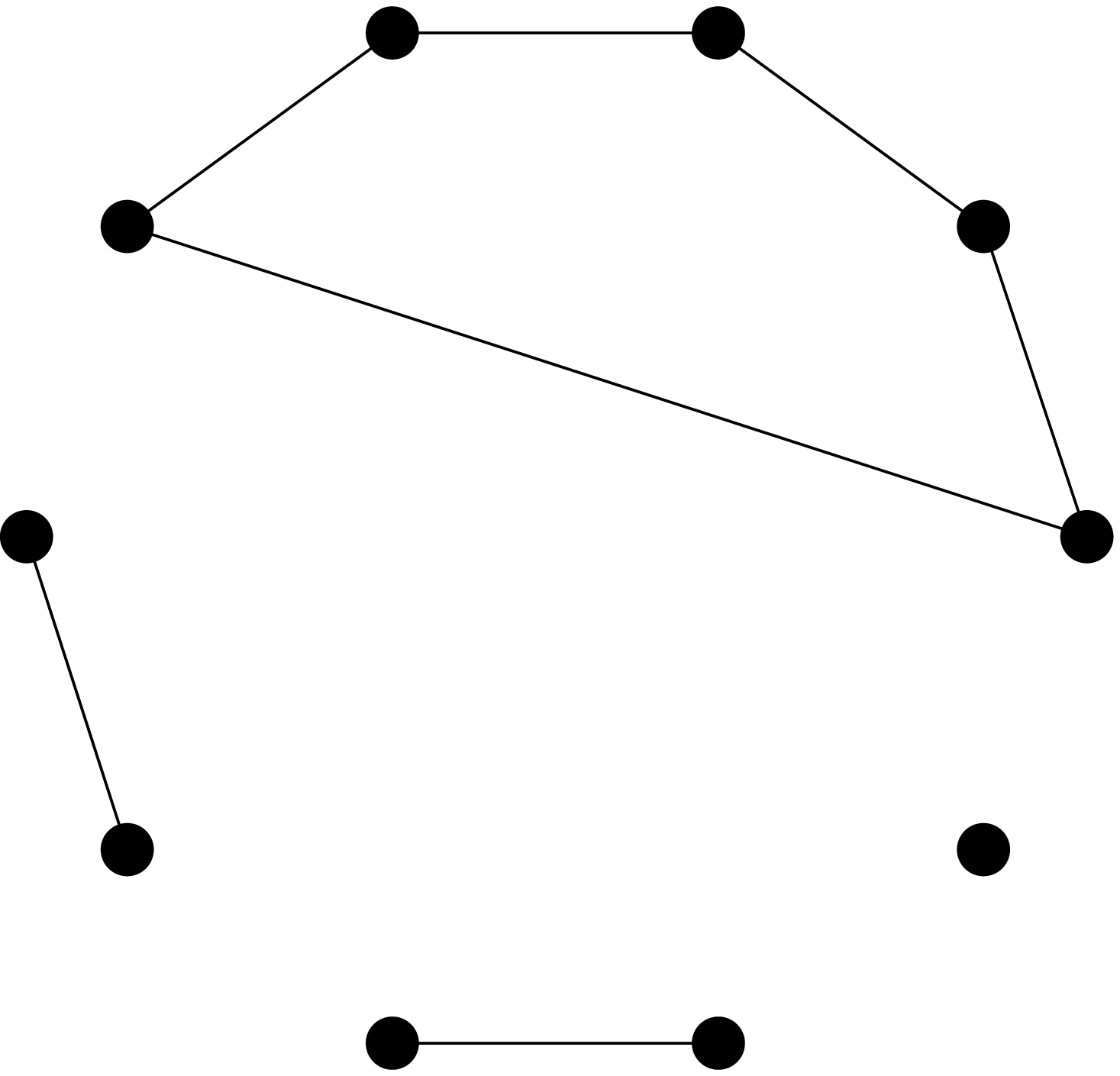}
\end{picture}\par
\end{minipage}
\begin{minipage}[t]{2.2cm}
\begin{picture}(1.4,1.8)
\leavevmode
\epsfxsize=1.4cm
\epsffile{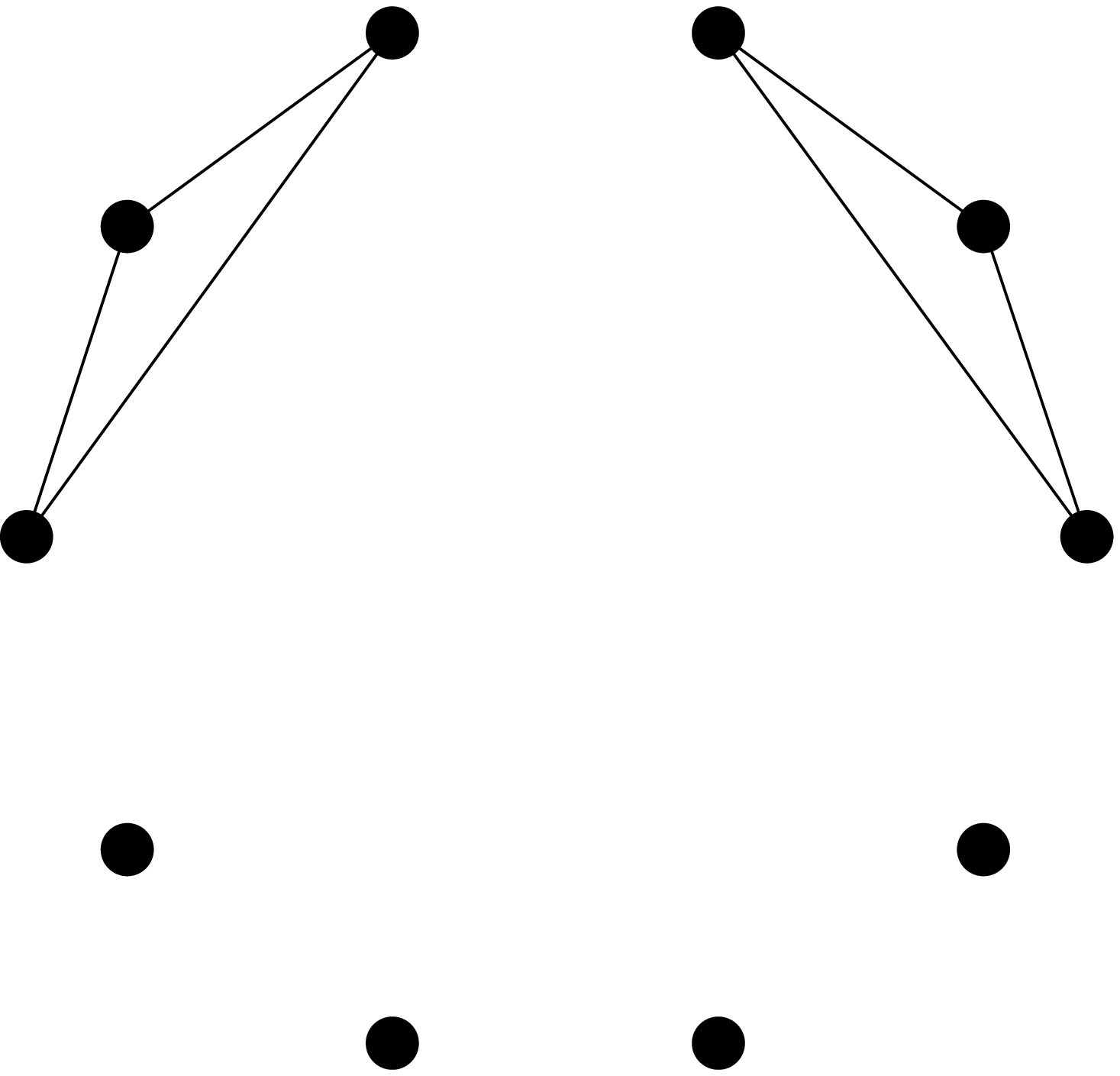}
\end{picture}\par
\end{minipage}
\begin{minipage}[t]{2.2cm}
\begin{picture}(1.4,1.8)
\leavevmode
\epsfxsize=1.4cm
\epsffile{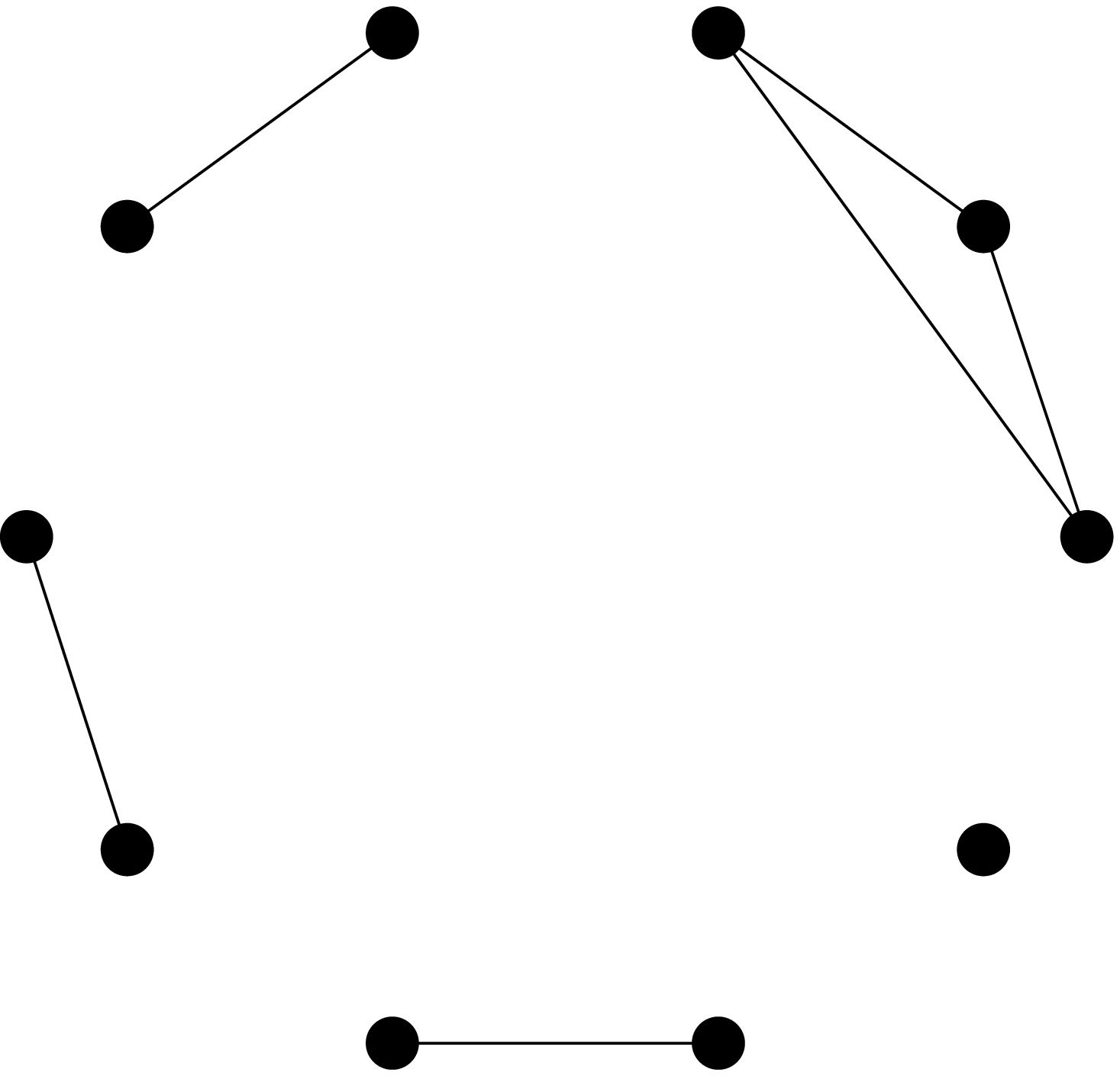}
\end{picture}\par
\end{minipage}
\begin{minipage}[t]{2.2cm}
\begin{picture}(1.4,1.8)
\leavevmode
\epsfxsize=1.4cm
\epsffile{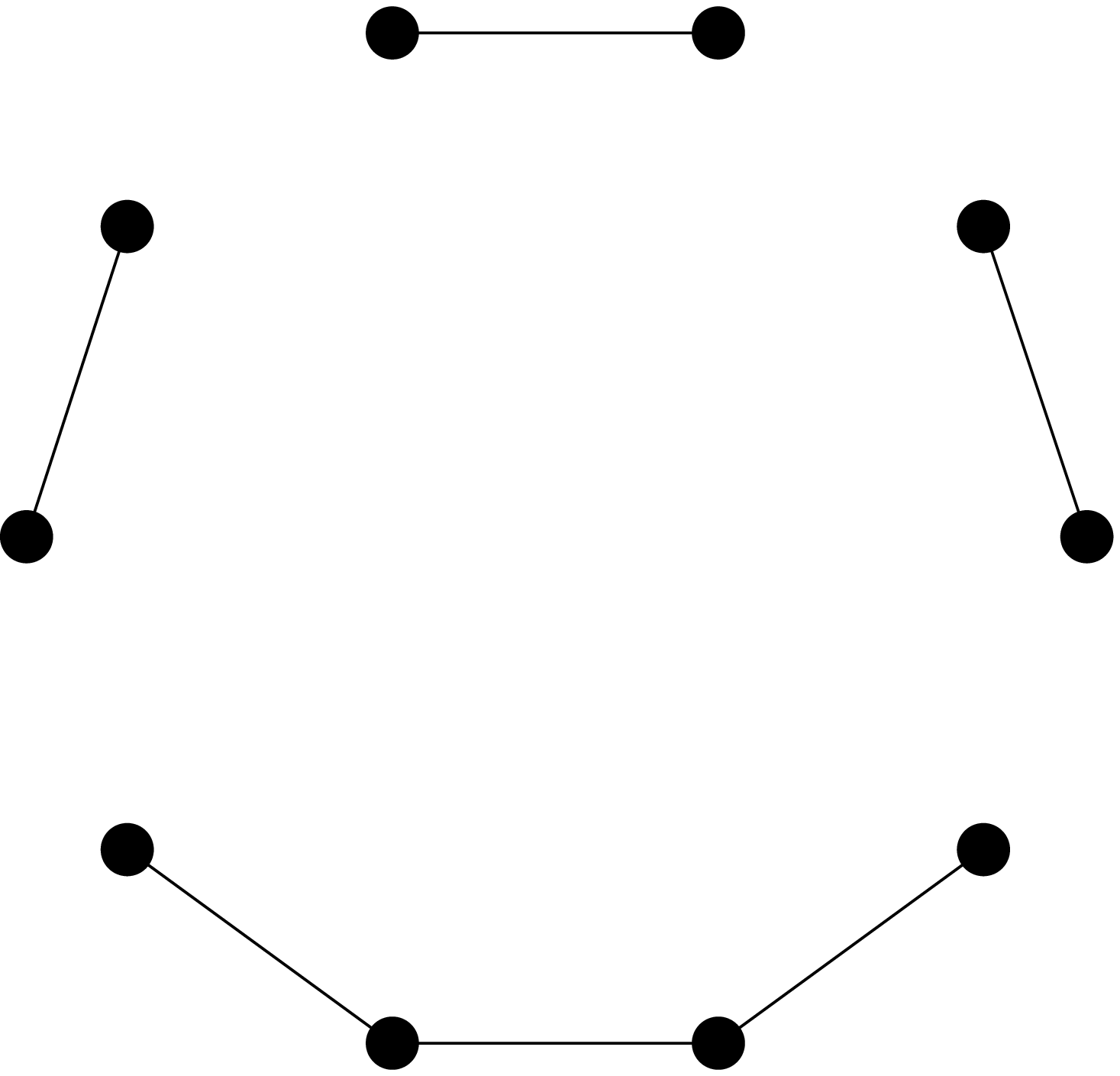}
\end{picture}\par
\end{minipage}
\begin{minipage}[t]{2.2cm}
\begin{picture}(1.4,1.8)
\leavevmode
\epsfxsize=1.4cm
\epsffile{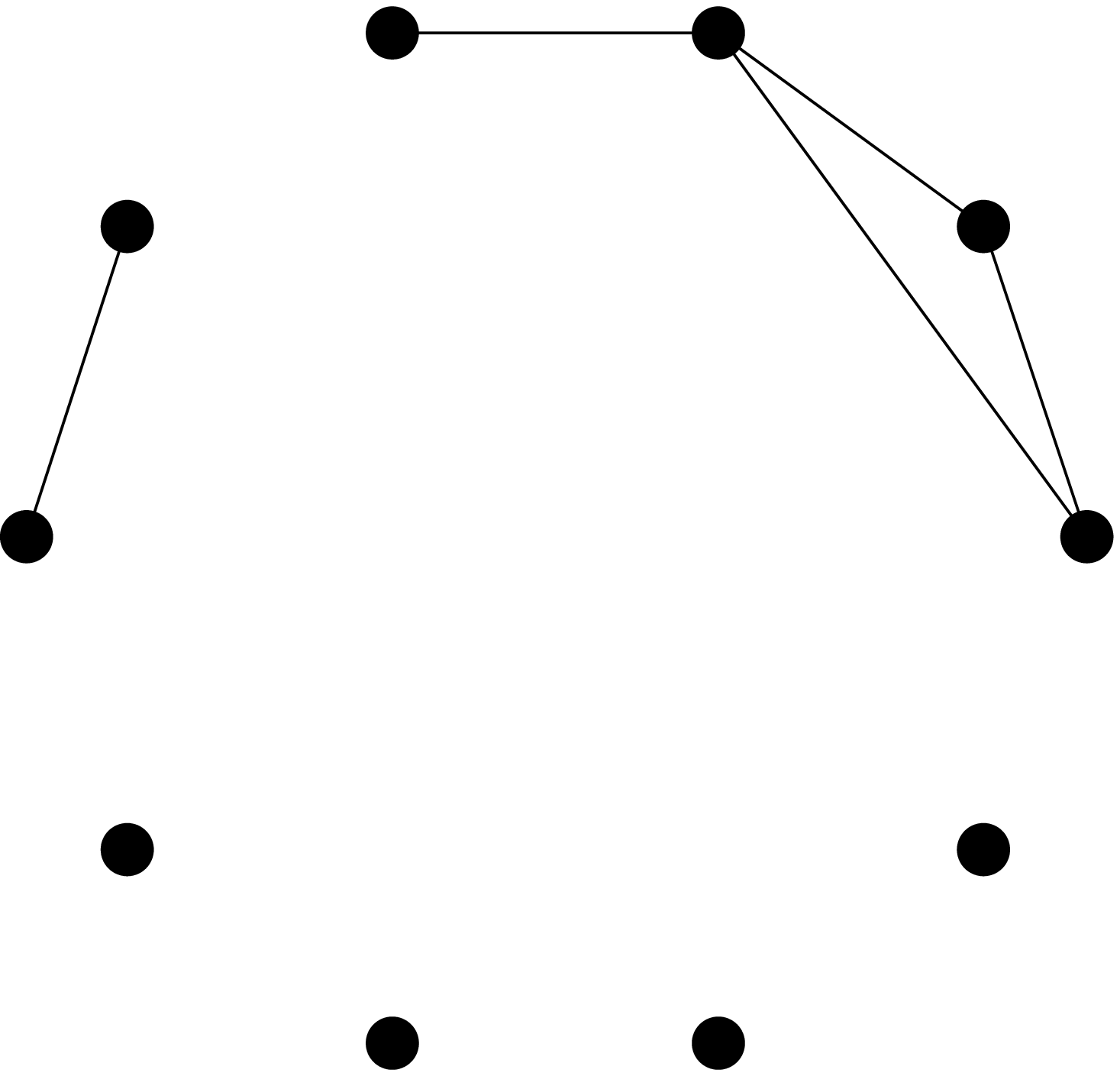}
\end{picture}\par
\end{minipage}
\begin{minipage}[t]{2.2cm}
\begin{picture}(1.4,1.8)
\leavevmode
\epsfxsize=1.4cm
\epsffile{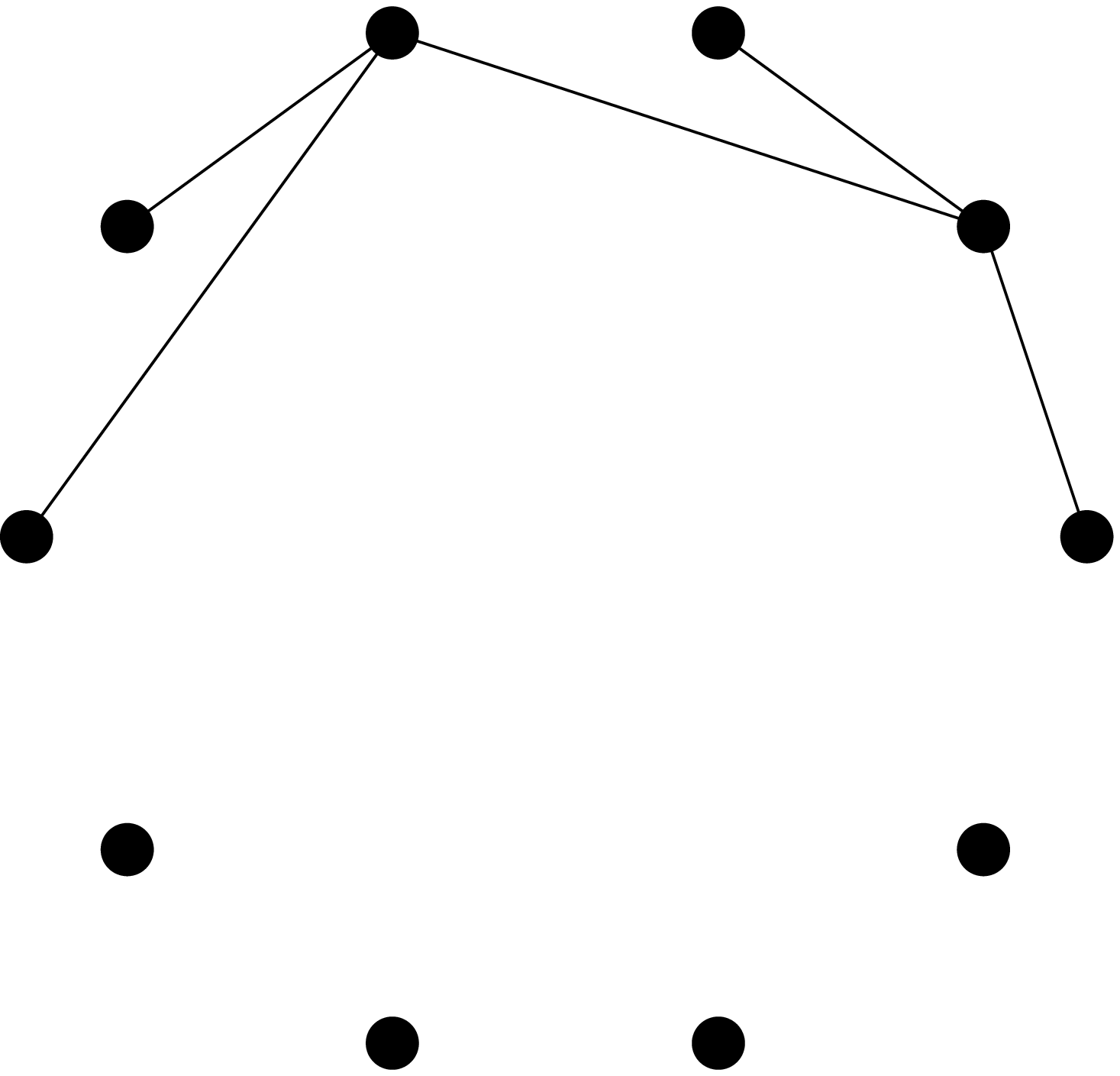}
\end{picture}\par
\end{minipage}

\bigskip
and contains one of the graphs:

\setlength{\unitlength}{1cm}
\begin{minipage}[t]{2.2cm}
\begin{picture}(1.4,1.8)
\leavevmode
\epsfxsize=1.4cm
\epsffile{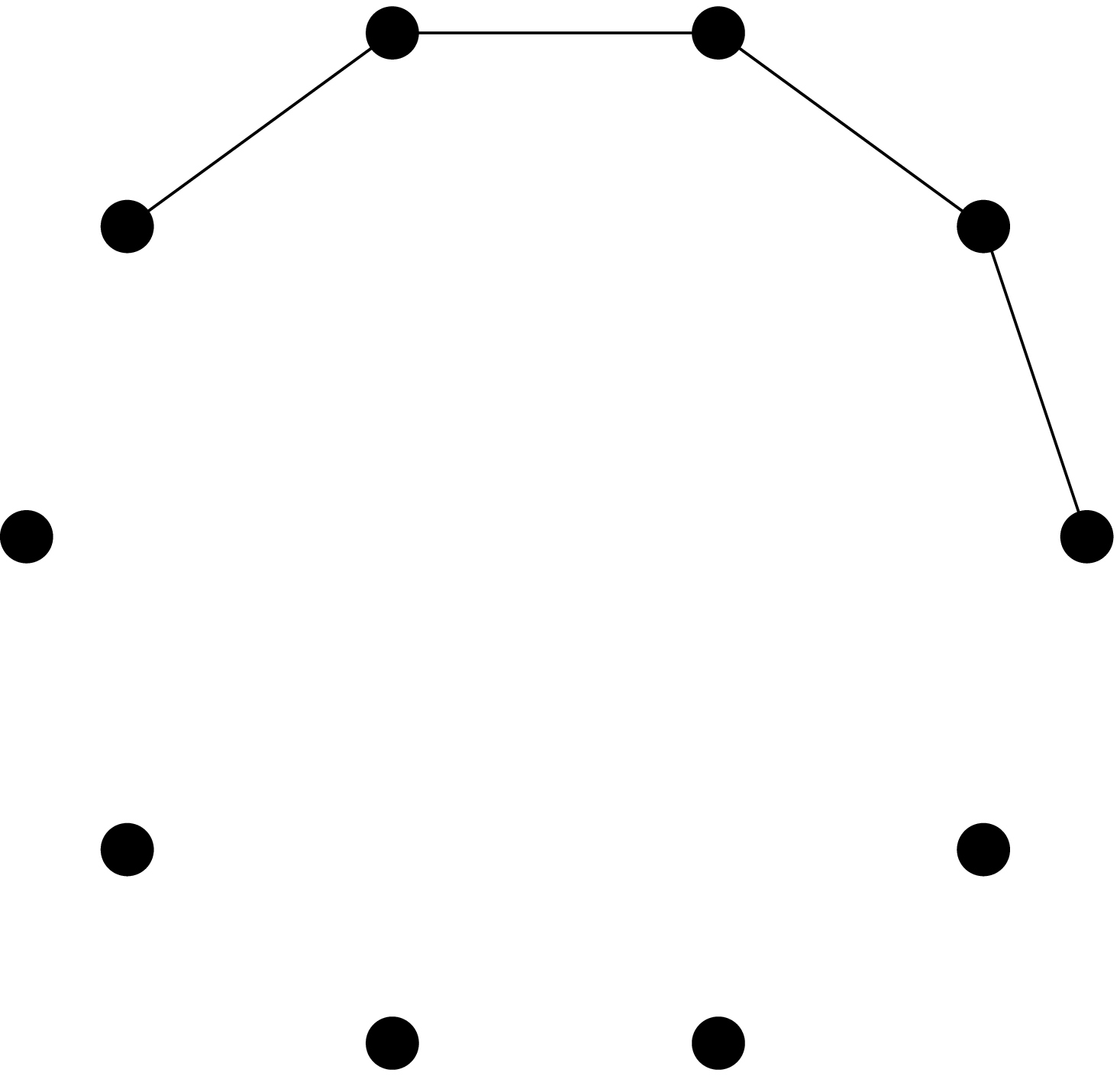}
\end{picture}\par
\end{minipage}
\begin{minipage}[t]{2.2cm}
\begin{picture}(1.4,1.8)
\leavevmode
\epsfxsize=1.4cm
\epsffile{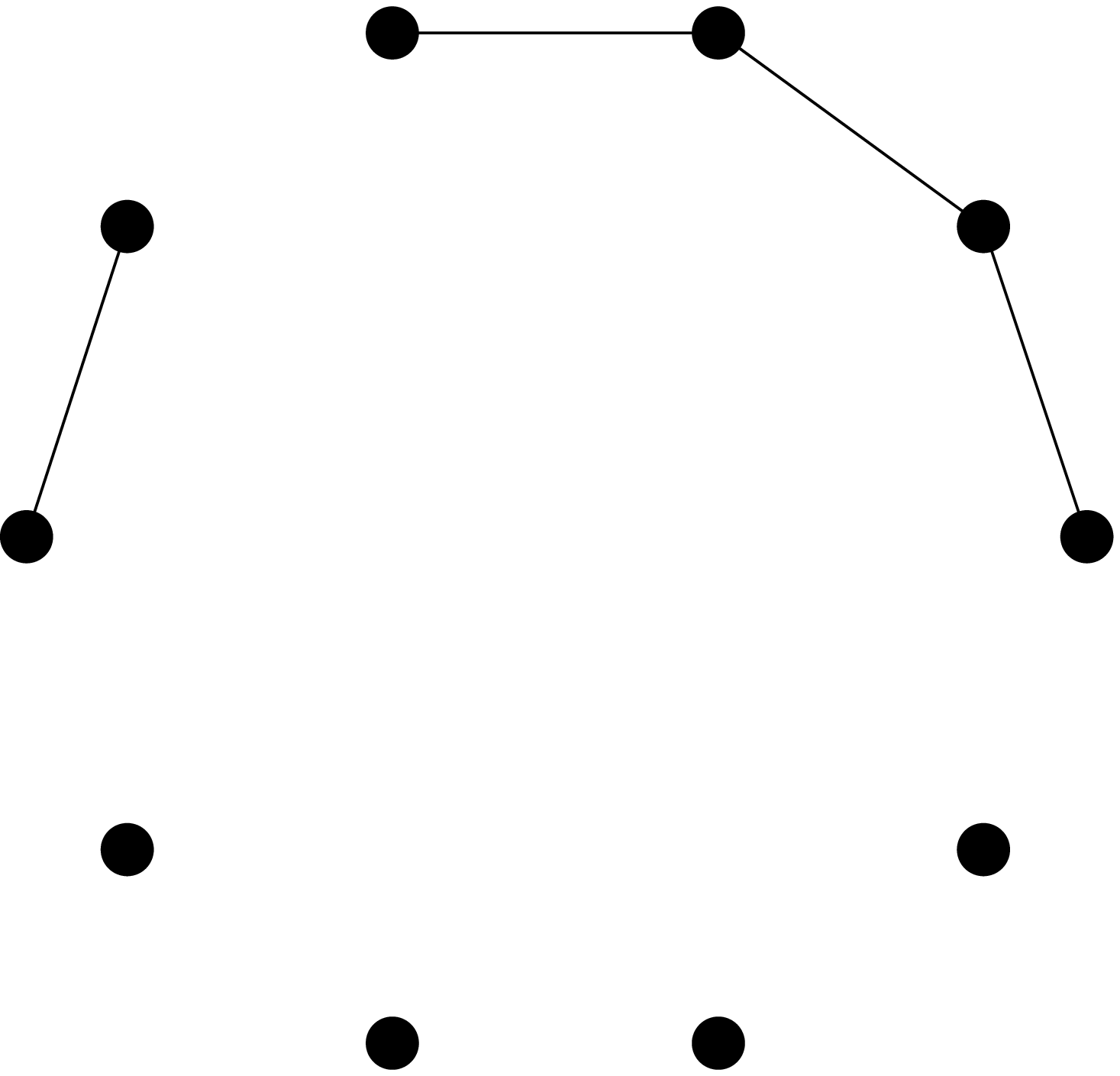}
\end{picture}\par
\end{minipage}
\begin{minipage}[t]{2.2cm}
\begin{picture}(1.4,1.8)
\leavevmode
\epsfxsize=1.4cm
\epsffile{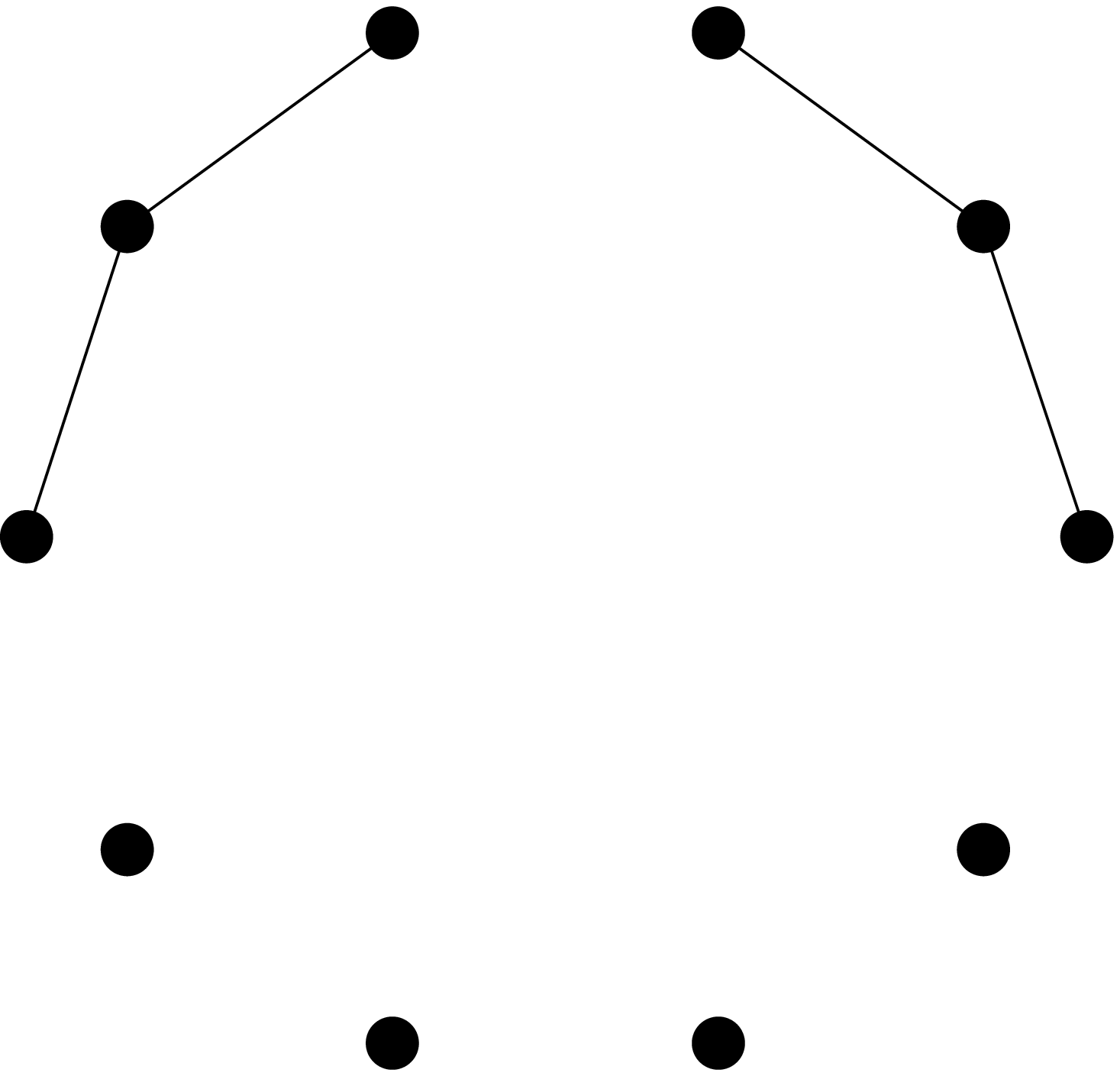}
\end{picture}\par
\end{minipage}
\begin{minipage}[t]{2.2cm}
\begin{picture}(1.4,1.8)
\leavevmode
\epsfxsize=1.4cm
\epsffile{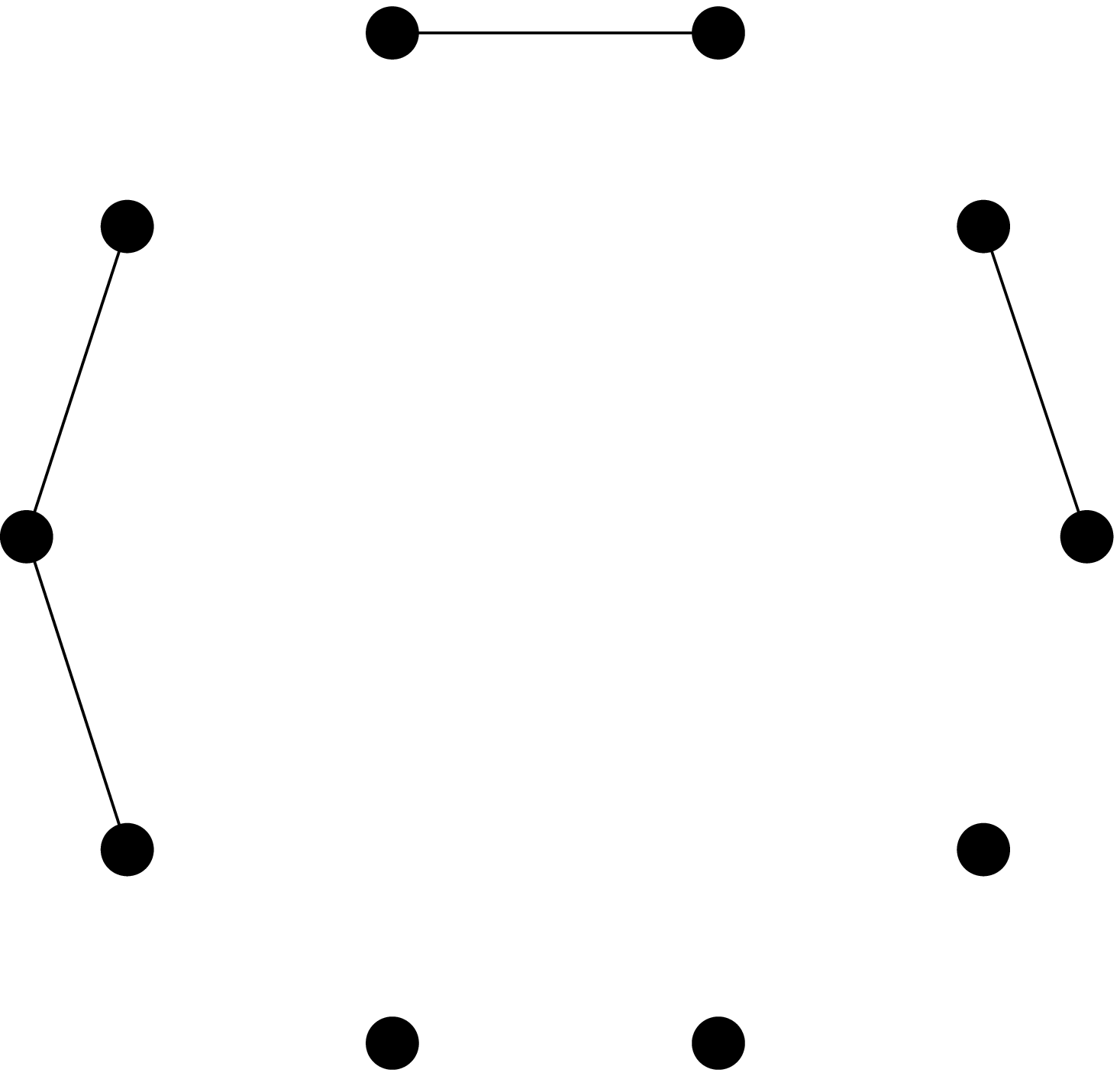}
\end{picture}\par
\end{minipage}

\bigskip
\hrule
\bigskip

$R(K_3,H) = 28$ if and only if $H^c$ is contained in one of the graphs:

\setlength{\unitlength}{1cm}
\begin{minipage}[t]{2.2cm}
\begin{picture}(1.4,1.8)
\leavevmode
\epsfxsize=1.4cm
\epsffile{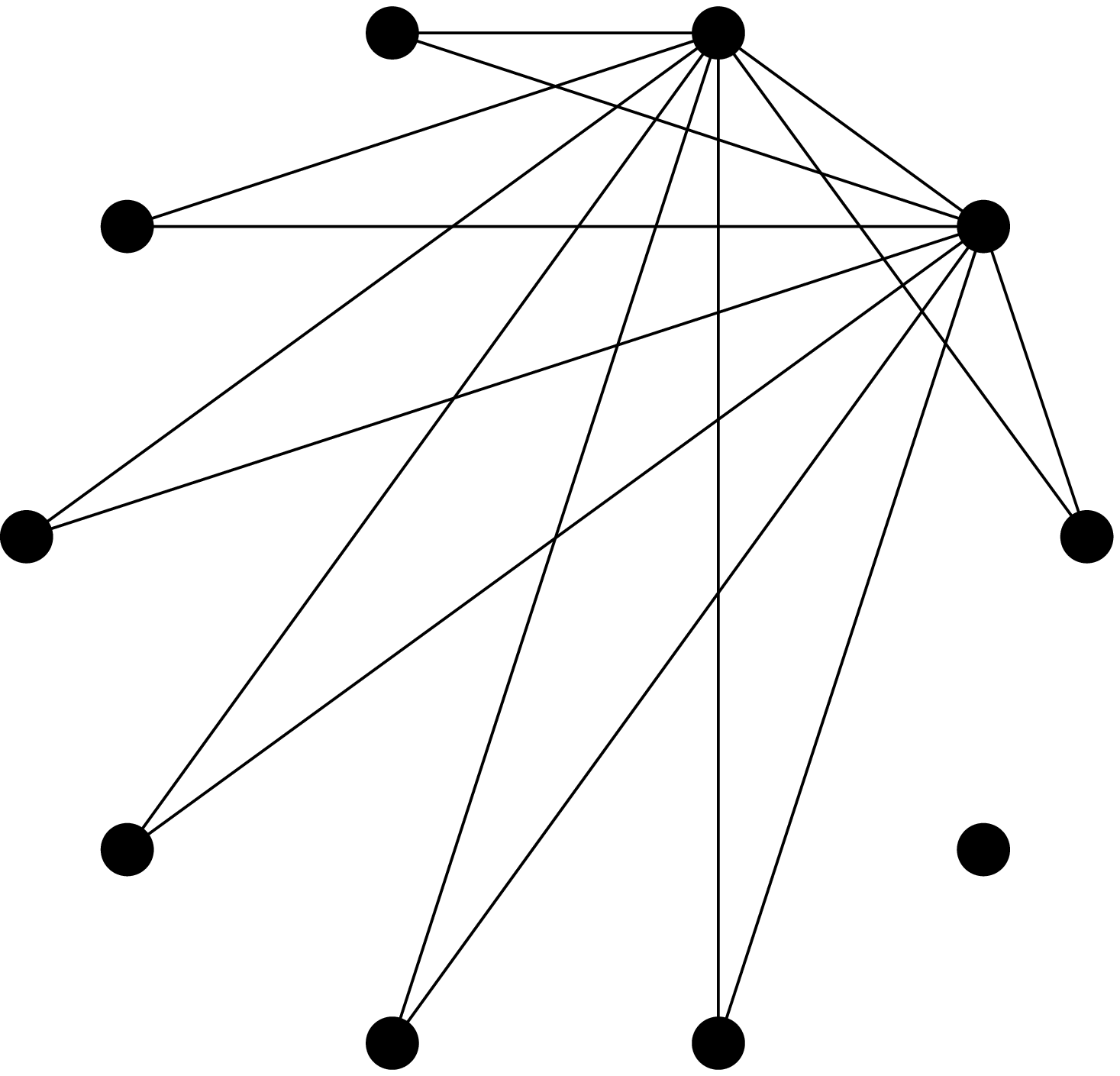}
\end{picture}\par
\end{minipage}
\begin{minipage}[t]{2.2cm}
\begin{picture}(1.4,1.8)
\leavevmode
\epsfxsize=1.4cm
\epsffile{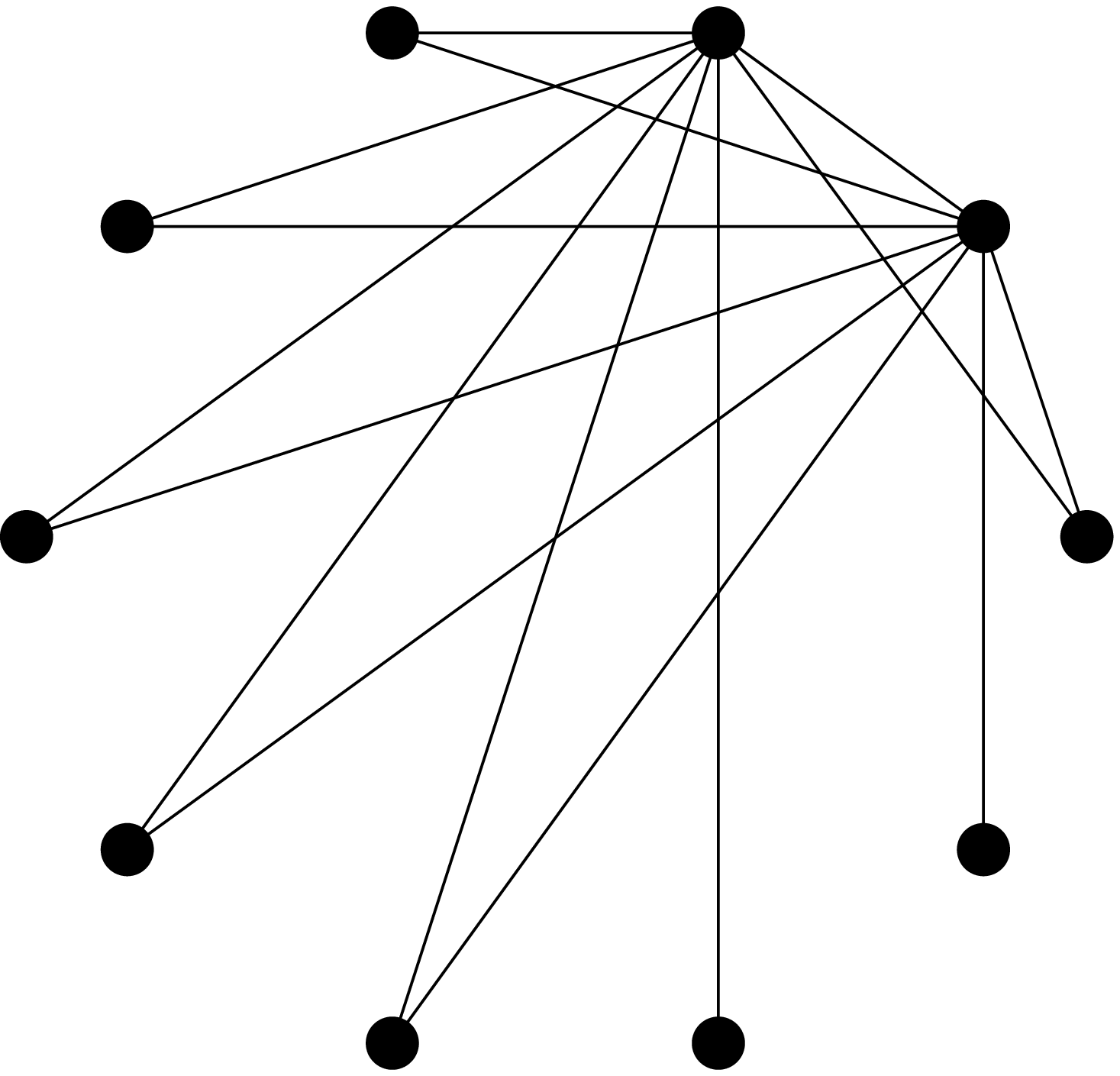}
\end{picture}\par
\end{minipage}
\begin{minipage}[t]{2.2cm}
\begin{picture}(1.4,1.8)
\leavevmode
\epsfxsize=1.4cm
\epsffile{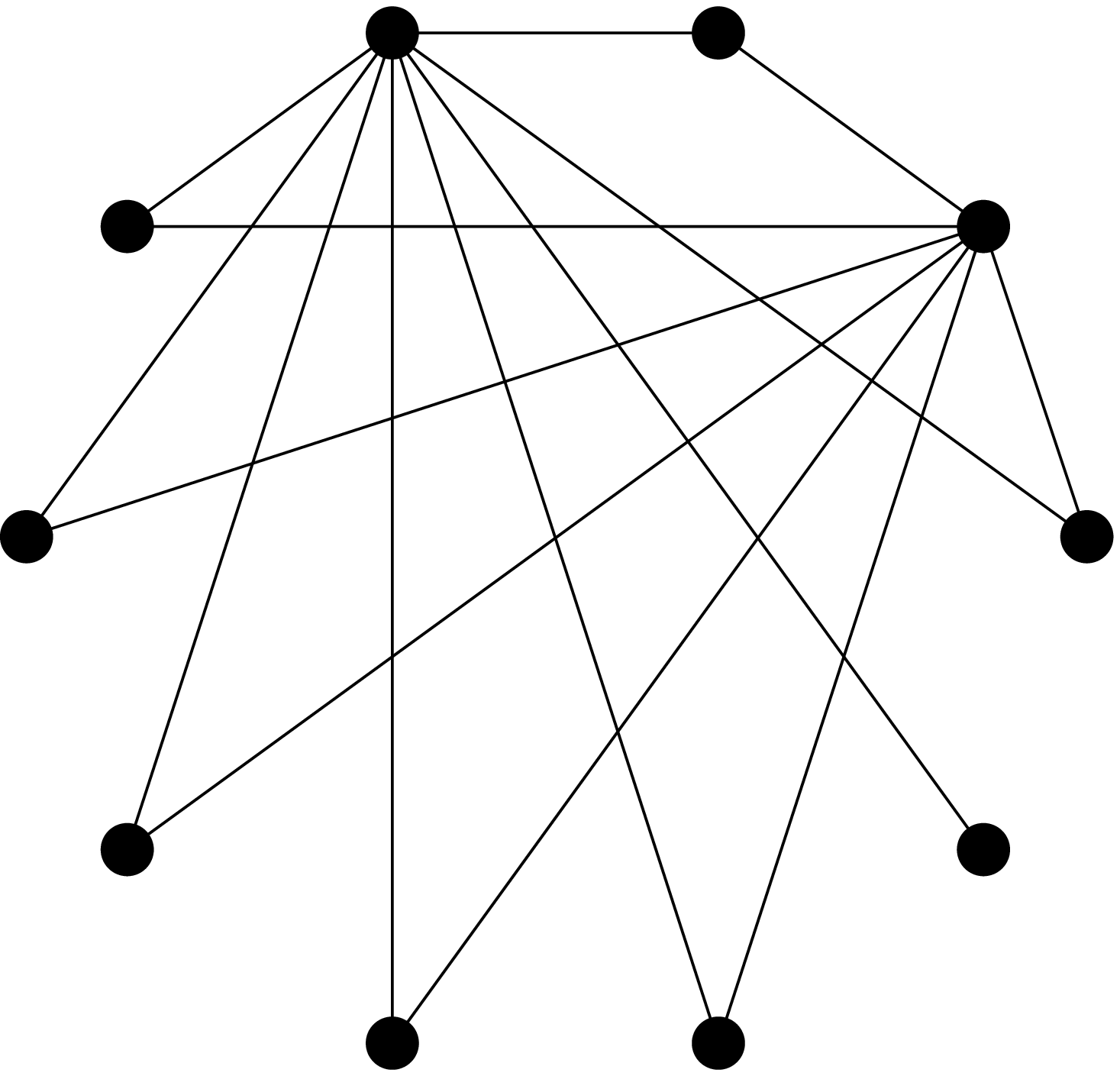}
\end{picture}\par
\end{minipage}
\begin{minipage}[t]{2.2cm}
\begin{picture}(1.4,1.8)
\leavevmode
\epsfxsize=1.4cm
\epsffile{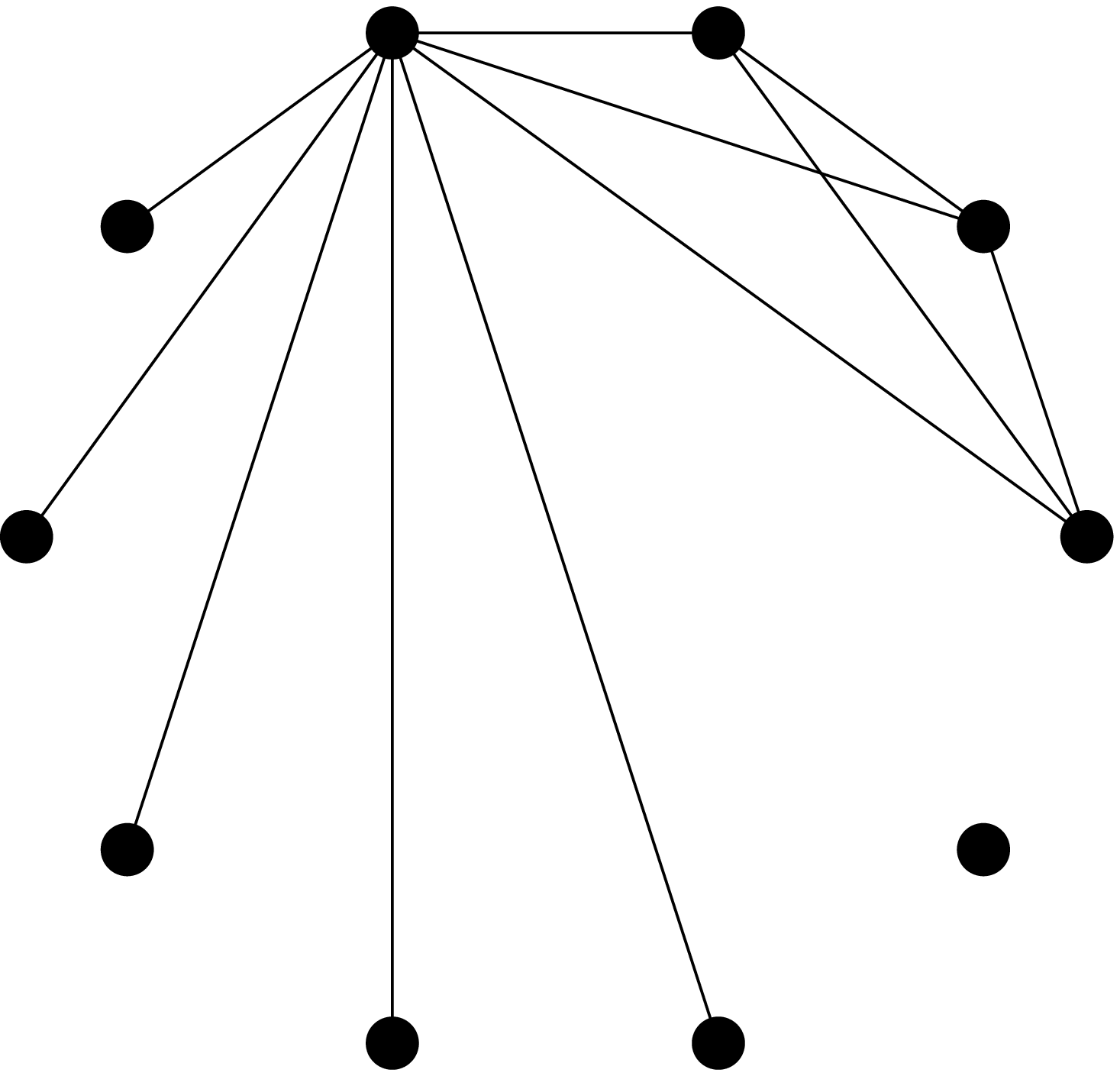}
\end{picture}\par
\end{minipage}
\begin{minipage}[t]{2.2cm}
\begin{picture}(1.4,1.8)
\leavevmode
\epsfxsize=1.4cm
\epsffile{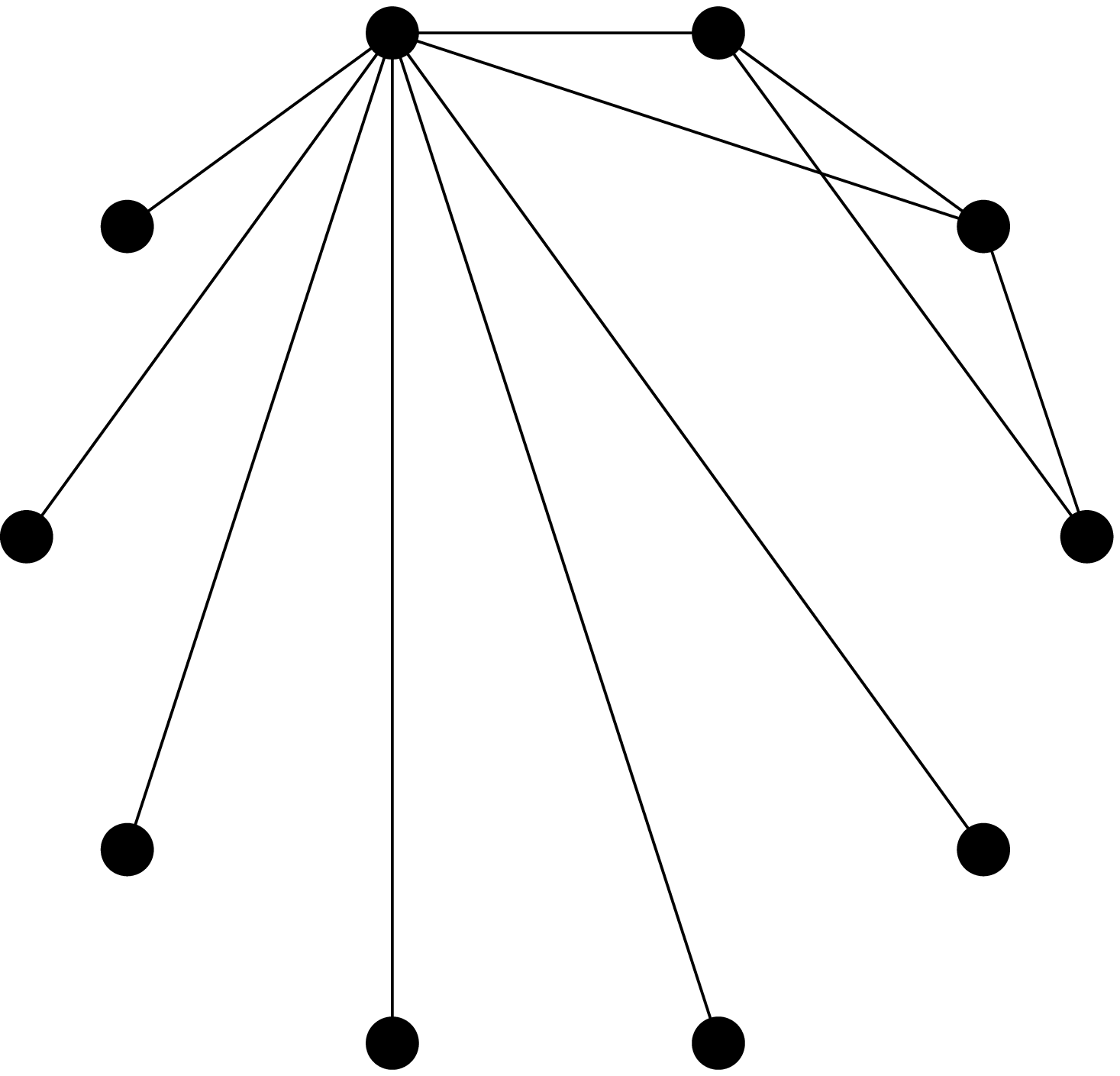}
\end{picture}\par
\end{minipage}
\begin{minipage}[t]{2.2cm}
\begin{picture}(1.4,1.8)
\leavevmode
\epsfxsize=1.4cm
\epsffile{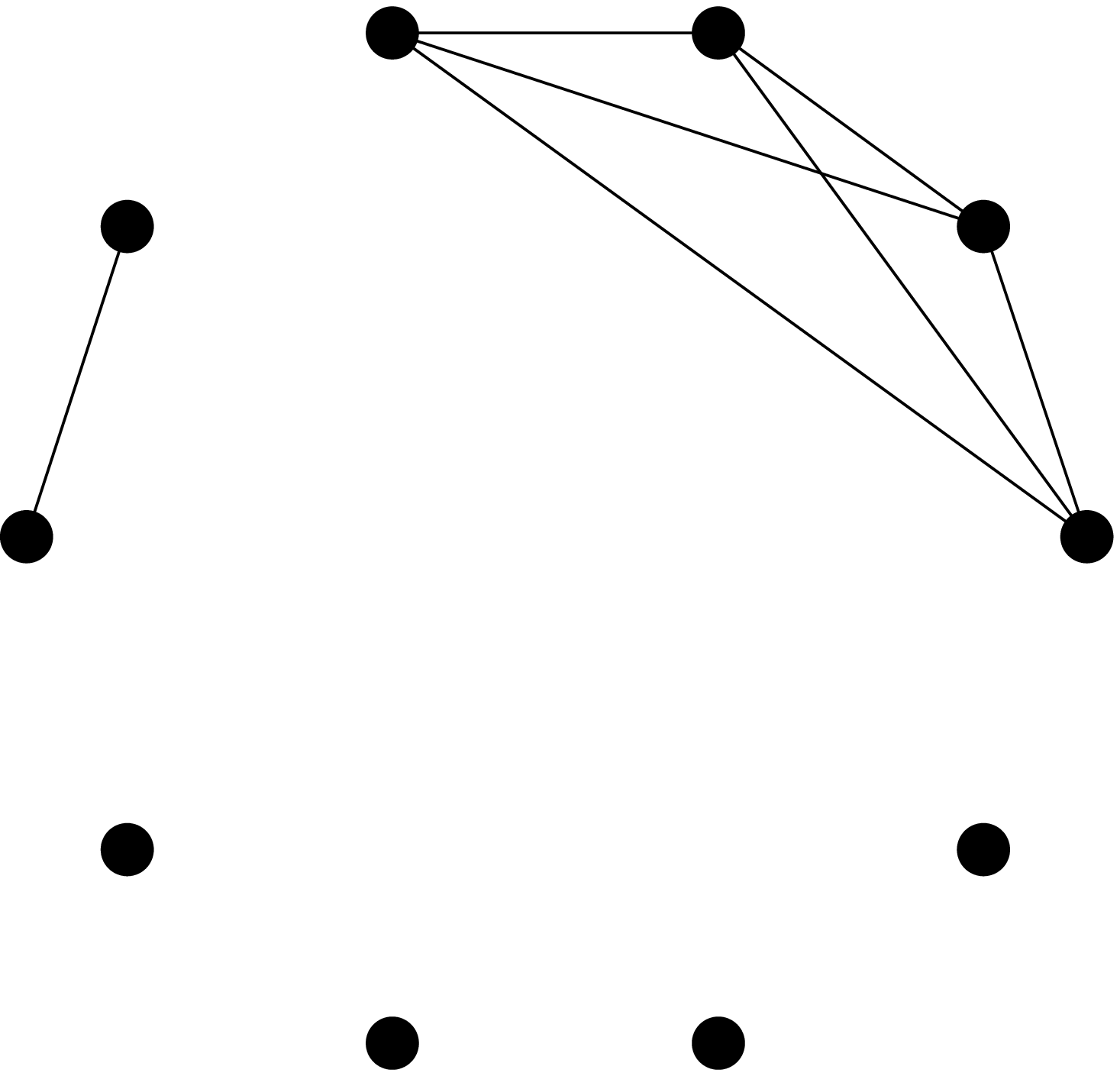}
\end{picture}\par
\end{minipage}
\begin{minipage}[t]{2.2cm}
\begin{picture}(1.4,1.8)
\leavevmode
\epsfxsize=1.4cm
\epsffile{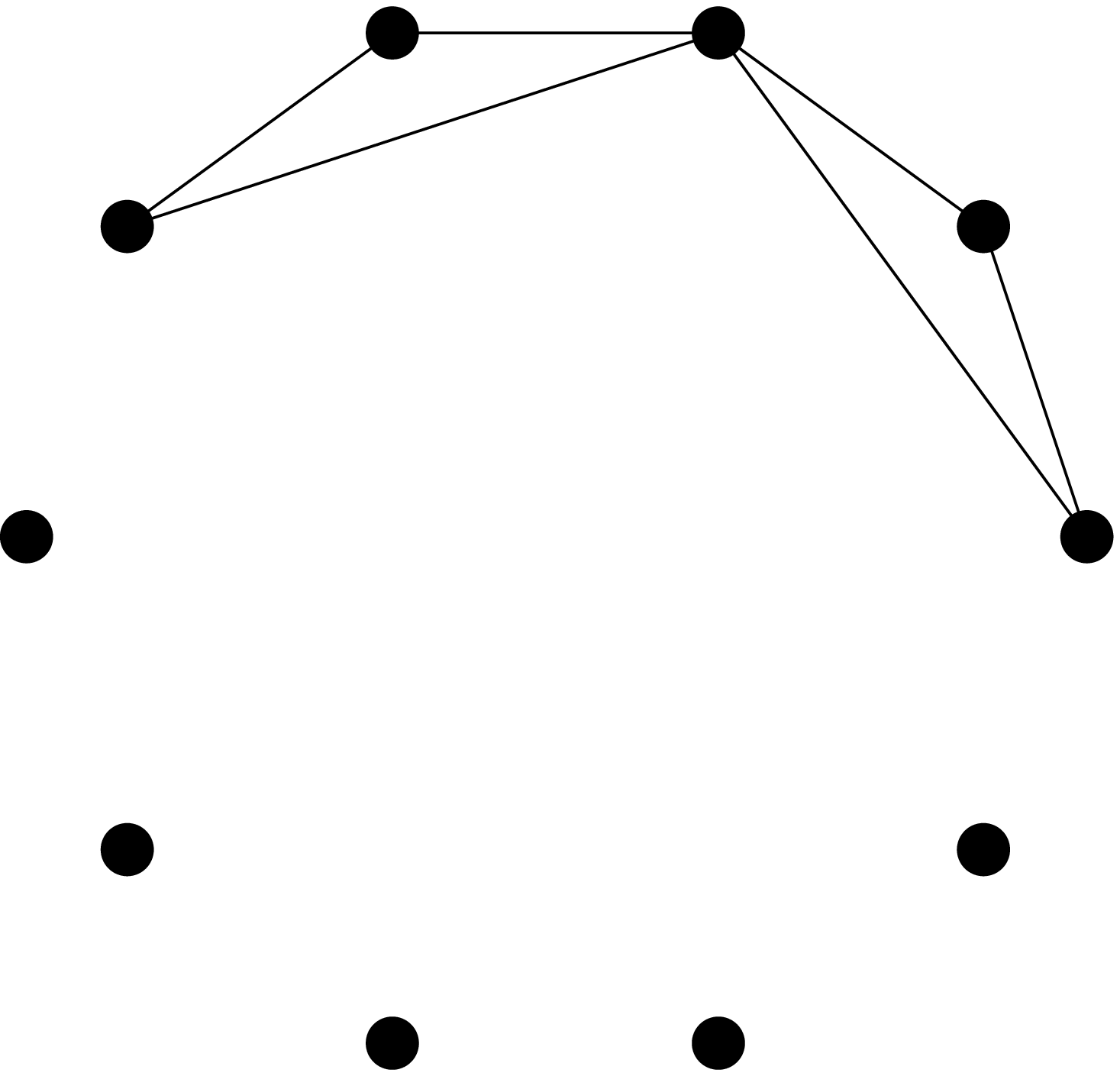}
\end{picture}\par
\end{minipage}

\bigskip
and contains one of the graphs:

\setlength{\unitlength}{1cm}
\begin{minipage}[t]{2.2cm}
\begin{picture}(1.4,1.8)
\leavevmode
\epsfxsize=1.4cm
\epsffile{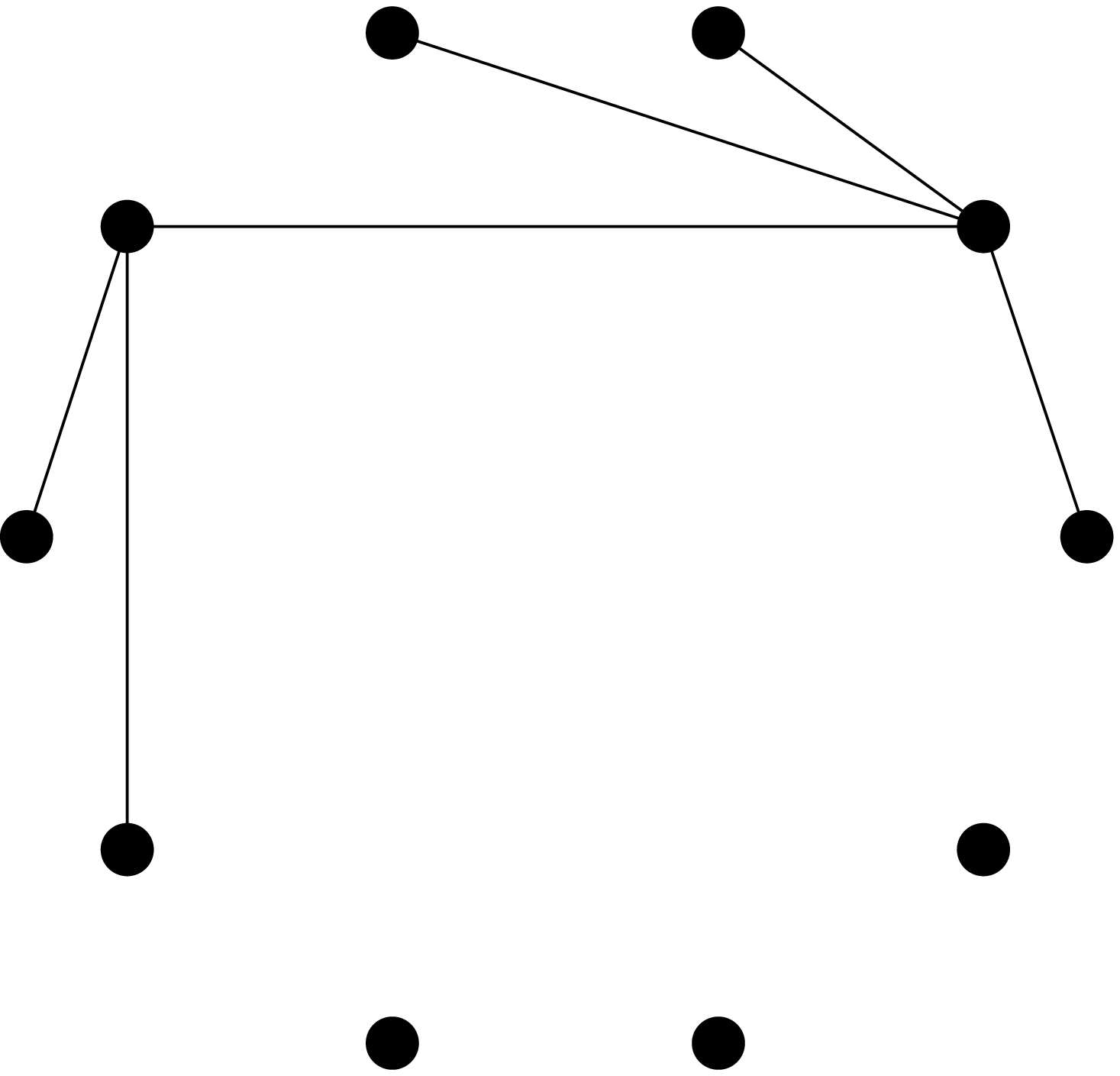}
\end{picture}\par
\end{minipage}
\begin{minipage}[t]{2.2cm}
\begin{picture}(1.4,1.8)
\leavevmode
\epsfxsize=1.4cm
\epsffile{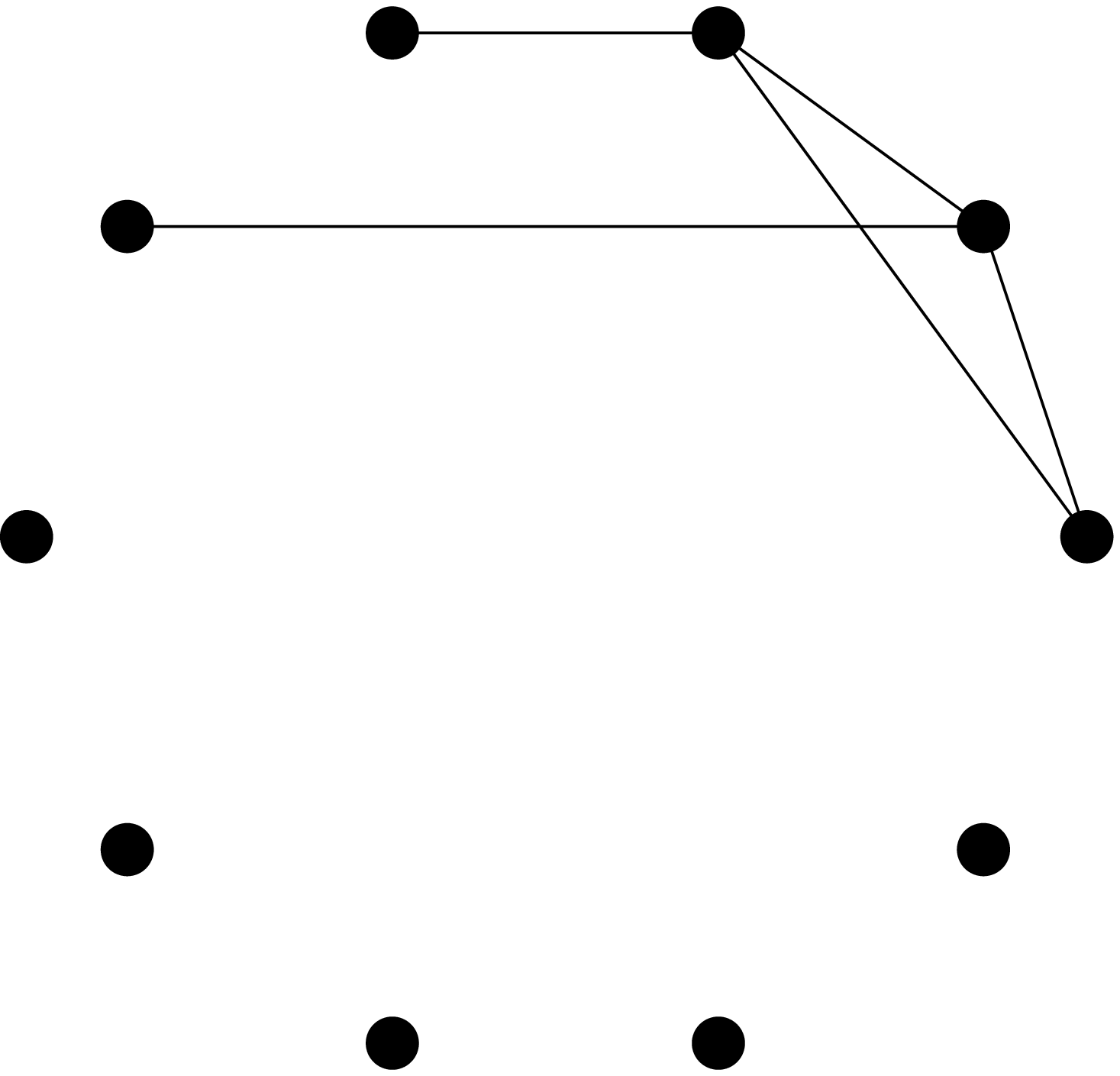}
\end{picture}\par
\end{minipage}
\begin{minipage}[t]{2.2cm}
\begin{picture}(1.4,1.8)
\leavevmode
\epsfxsize=1.4cm
\epsffile{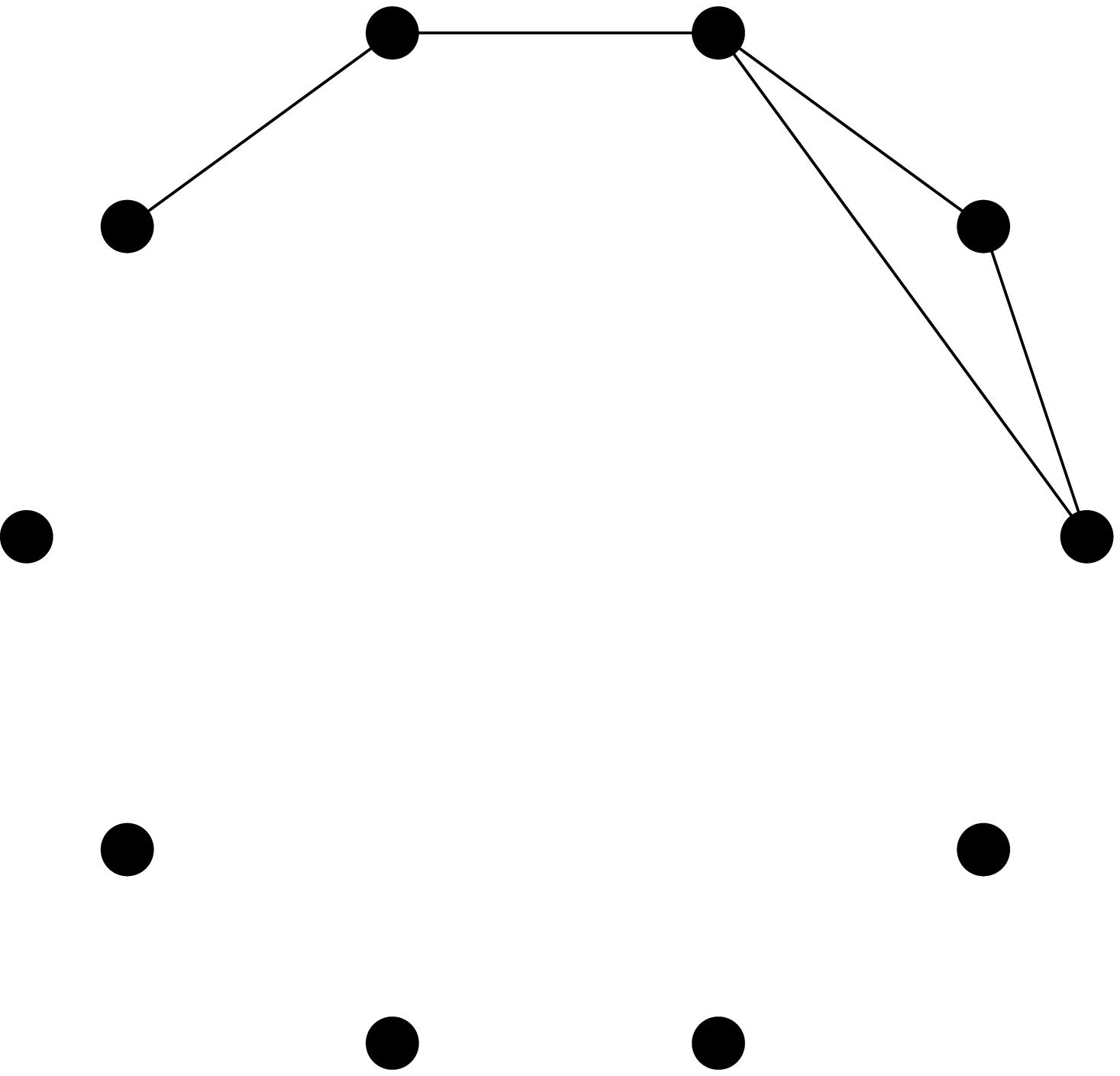}
\end{picture}\par
\end{minipage}
\begin{minipage}[t]{2.2cm}
\begin{picture}(1.4,1.8)
\leavevmode
\epsfxsize=1.4cm
\epsffile{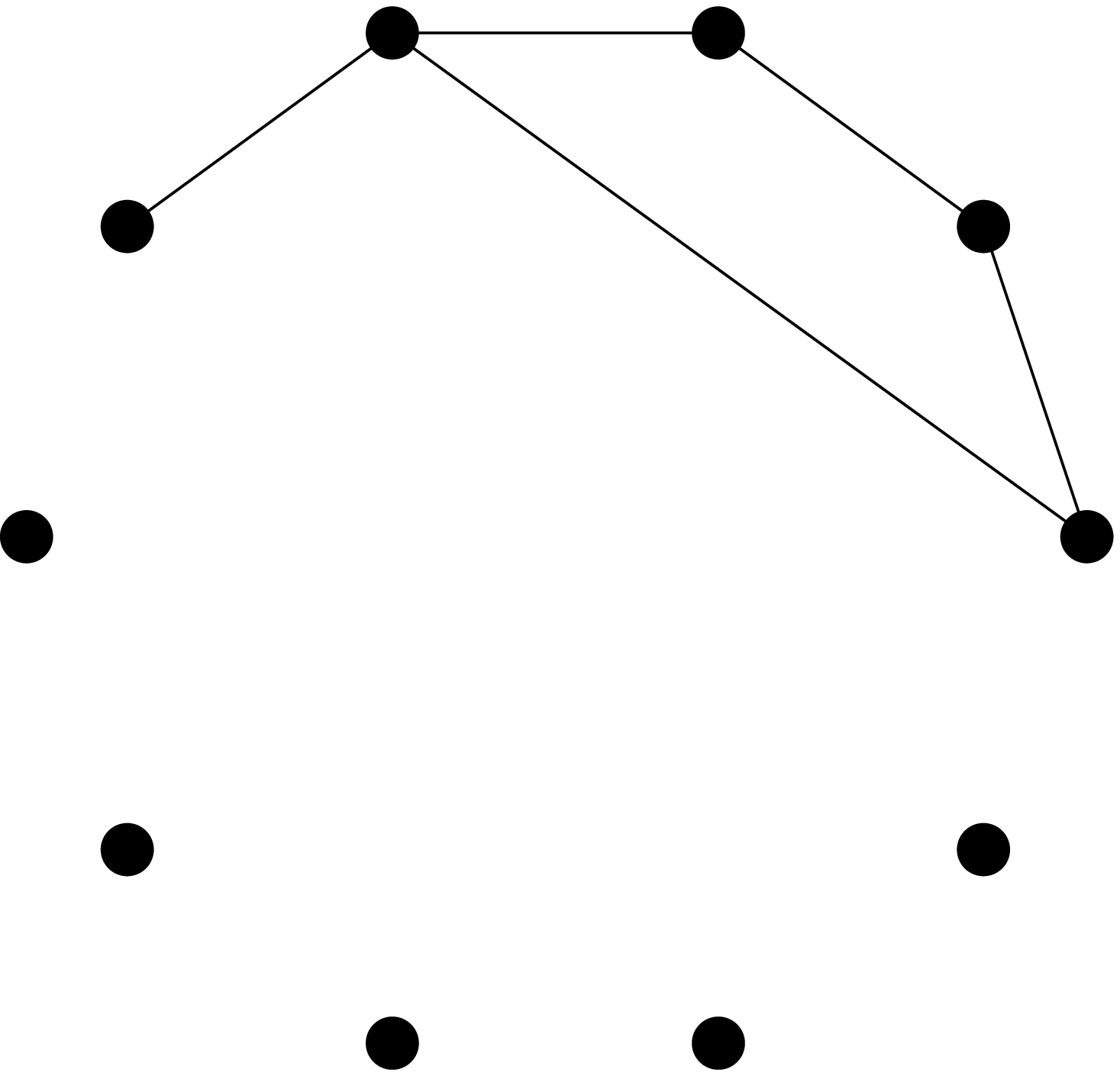}
\end{picture}\par
\end{minipage}
\begin{minipage}[t]{2.2cm}
\begin{picture}(1.4,1.8)
\leavevmode
\epsfxsize=1.4cm
\epsffile{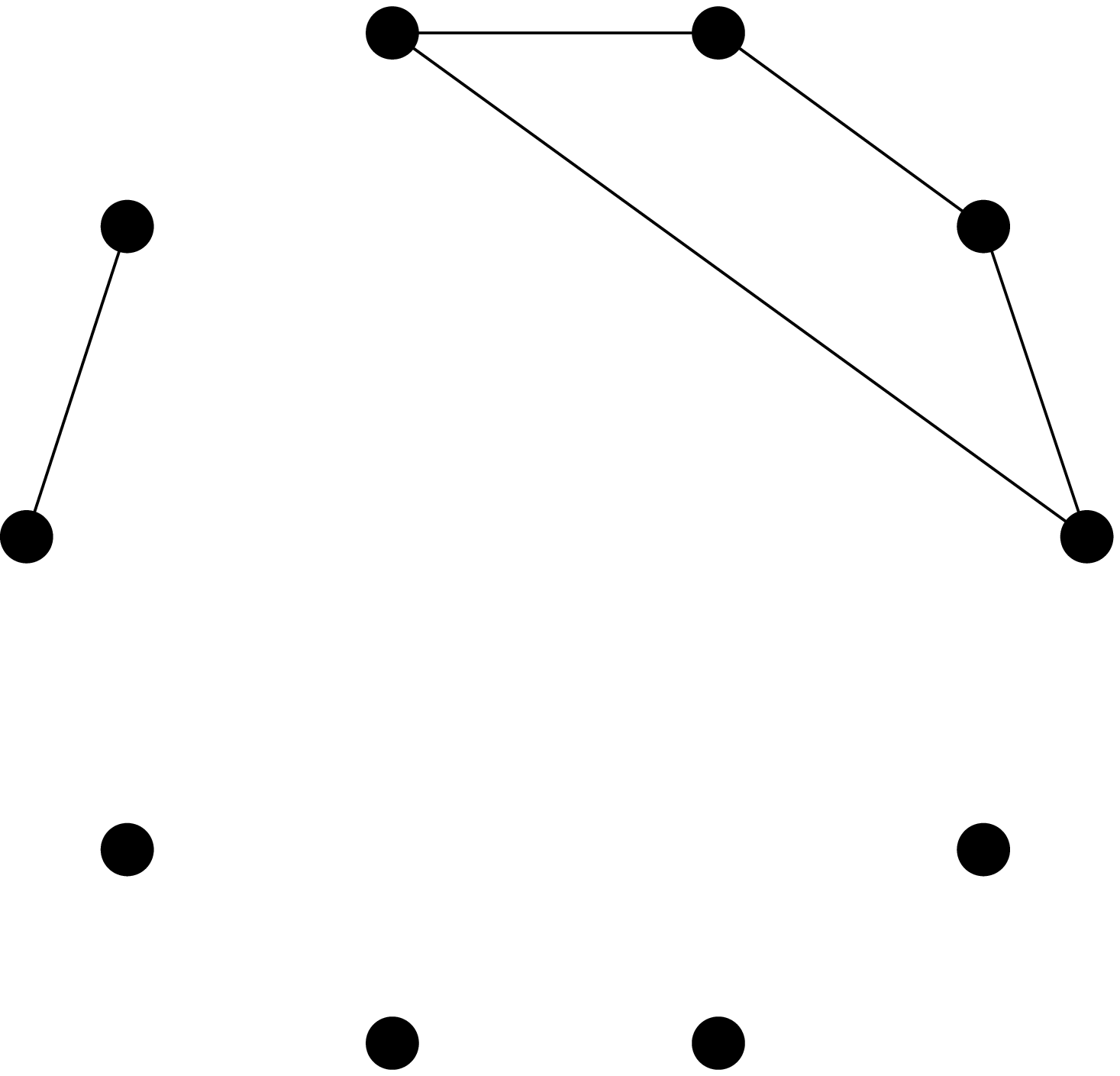}
\end{picture}\par
\end{minipage}
\begin{minipage}[t]{2.2cm}
\begin{picture}(1.4,1.8)
\leavevmode
\epsfxsize=1.4cm
\epsffile{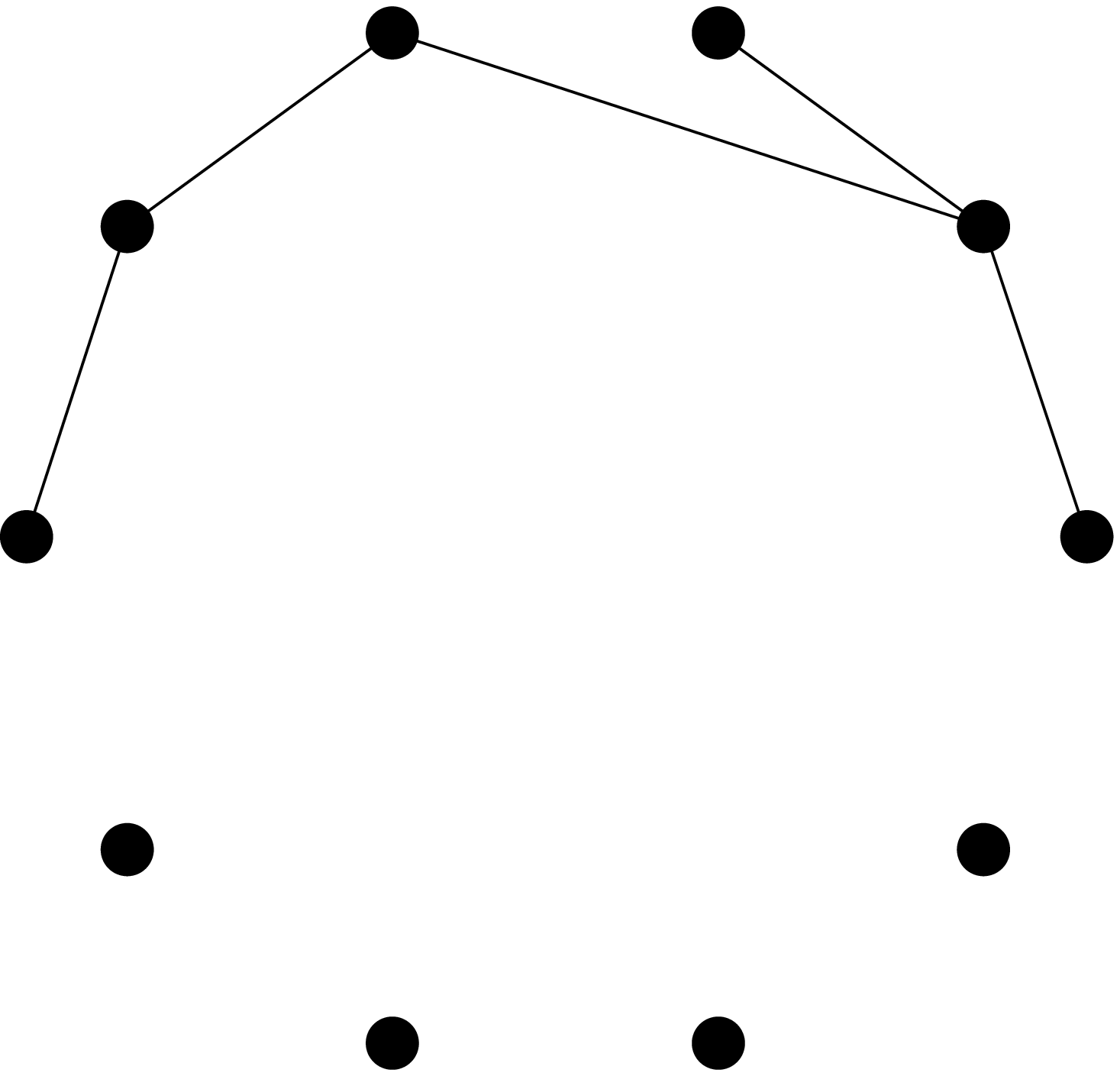}
\end{picture}\par
\end{minipage}

\bigskip
\hrule
\bigskip

$R(K_3,H) = 27$ if and only if $H^c$ is contained in one of the graphs:

\setlength{\unitlength}{1cm}
\begin{minipage}[t]{2.2cm}
\begin{picture}(1.4,1.8)
\leavevmode
\epsfxsize=1.4cm
\epsffile{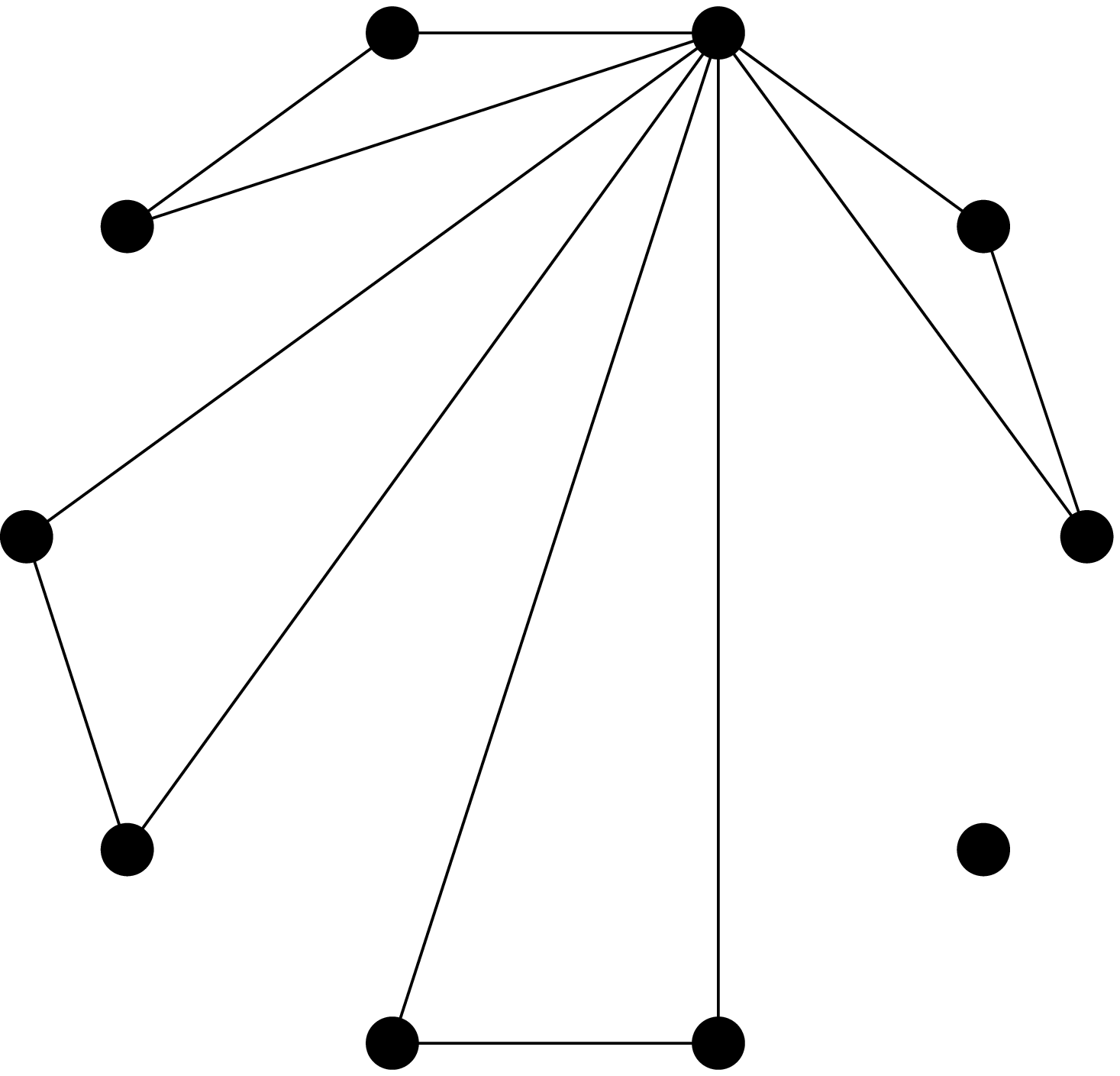}
\end{picture}\par
\end{minipage}
\begin{minipage}[t]{2.2cm}
\begin{picture}(1.4,1.8)
\leavevmode
\epsfxsize=1.4cm
\epsffile{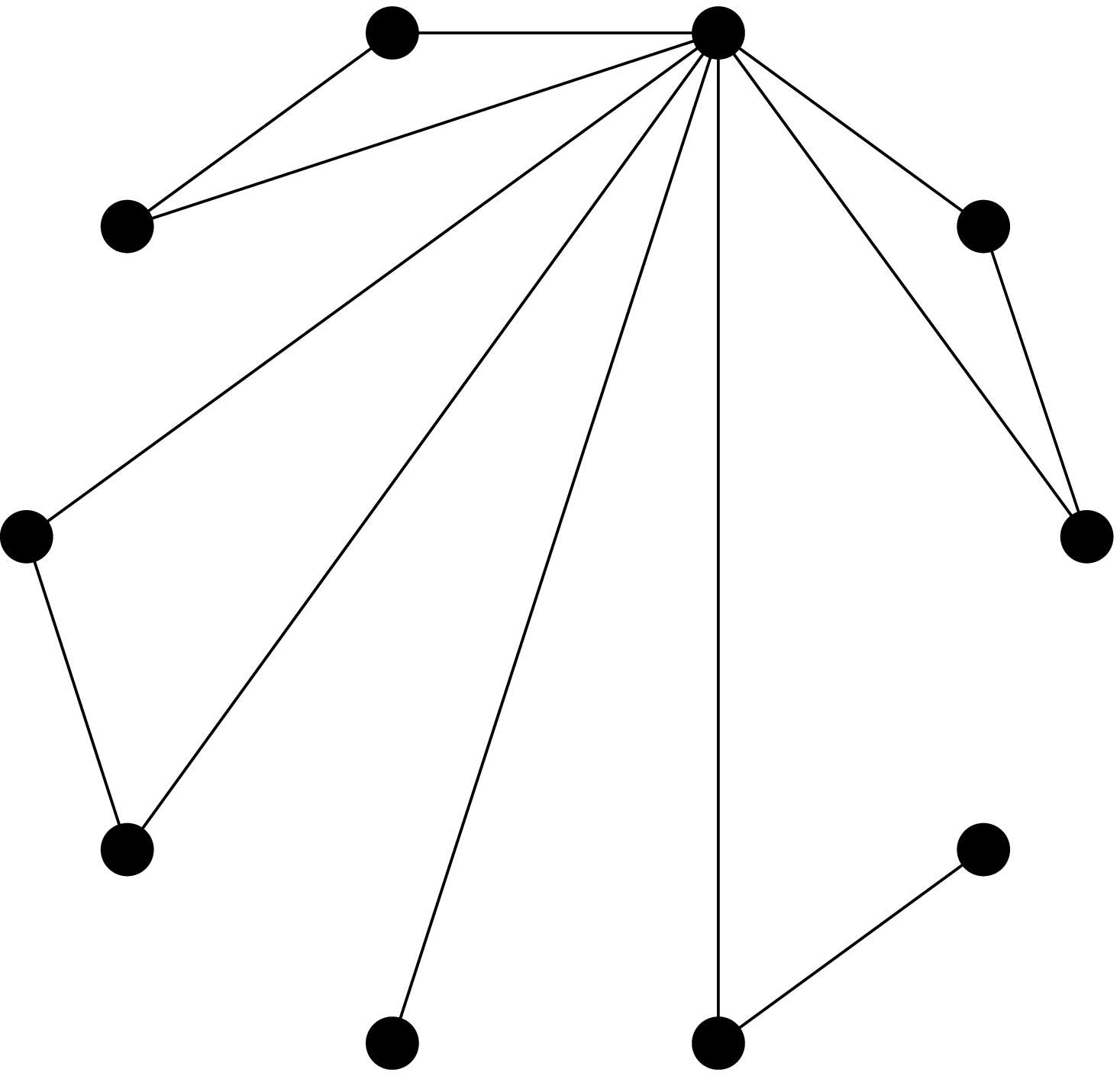}
\end{picture}\par
\end{minipage}
\begin{minipage}[t]{2.2cm}
\begin{picture}(1.4,1.8)
\leavevmode
\epsfxsize=1.4cm
\epsffile{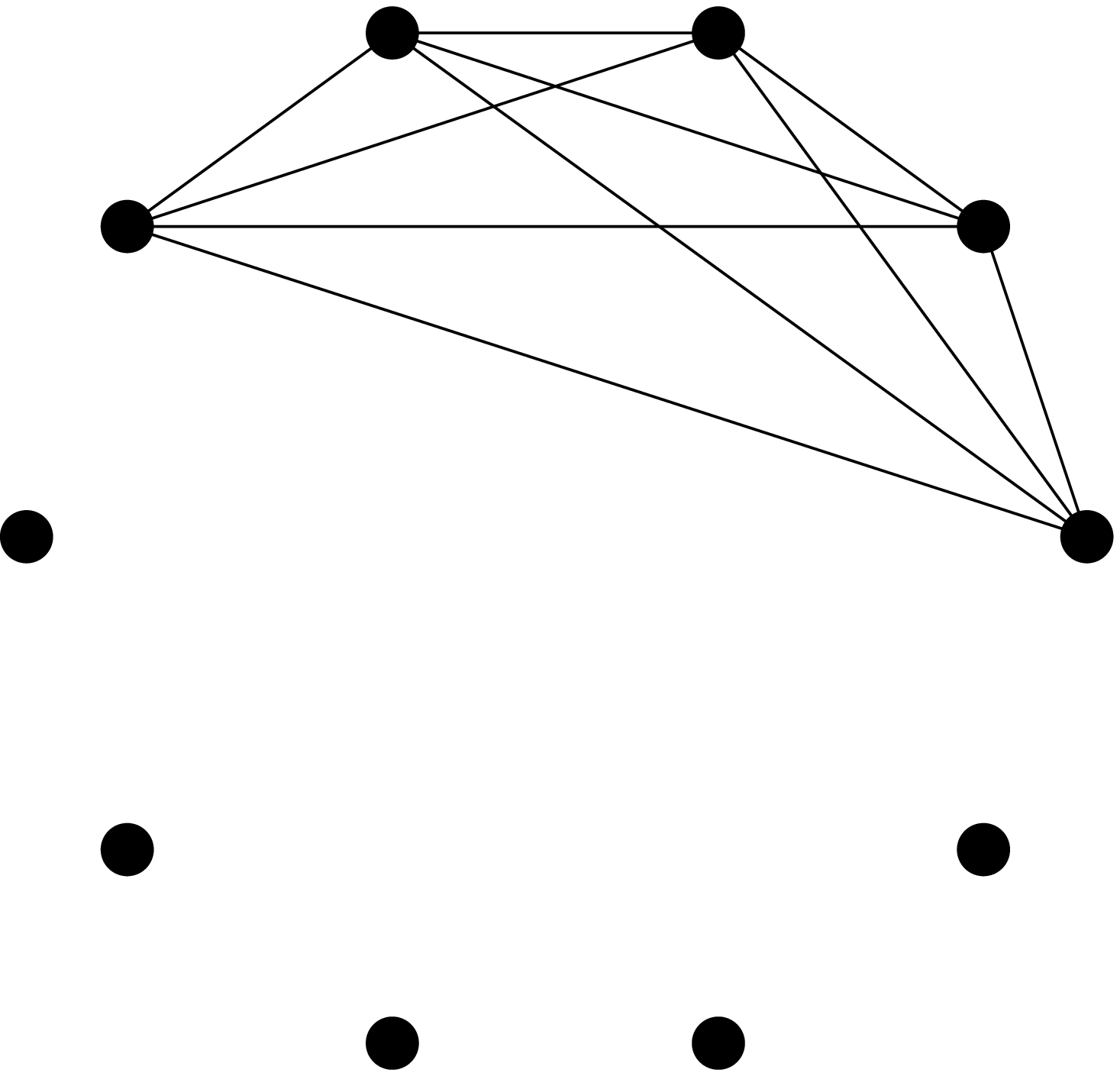}
\end{picture}\par
\end{minipage}
\begin{minipage}[t]{2.2cm}
\begin{picture}(1.4,1.8)
\leavevmode
\epsfxsize=1.4cm
\epsffile{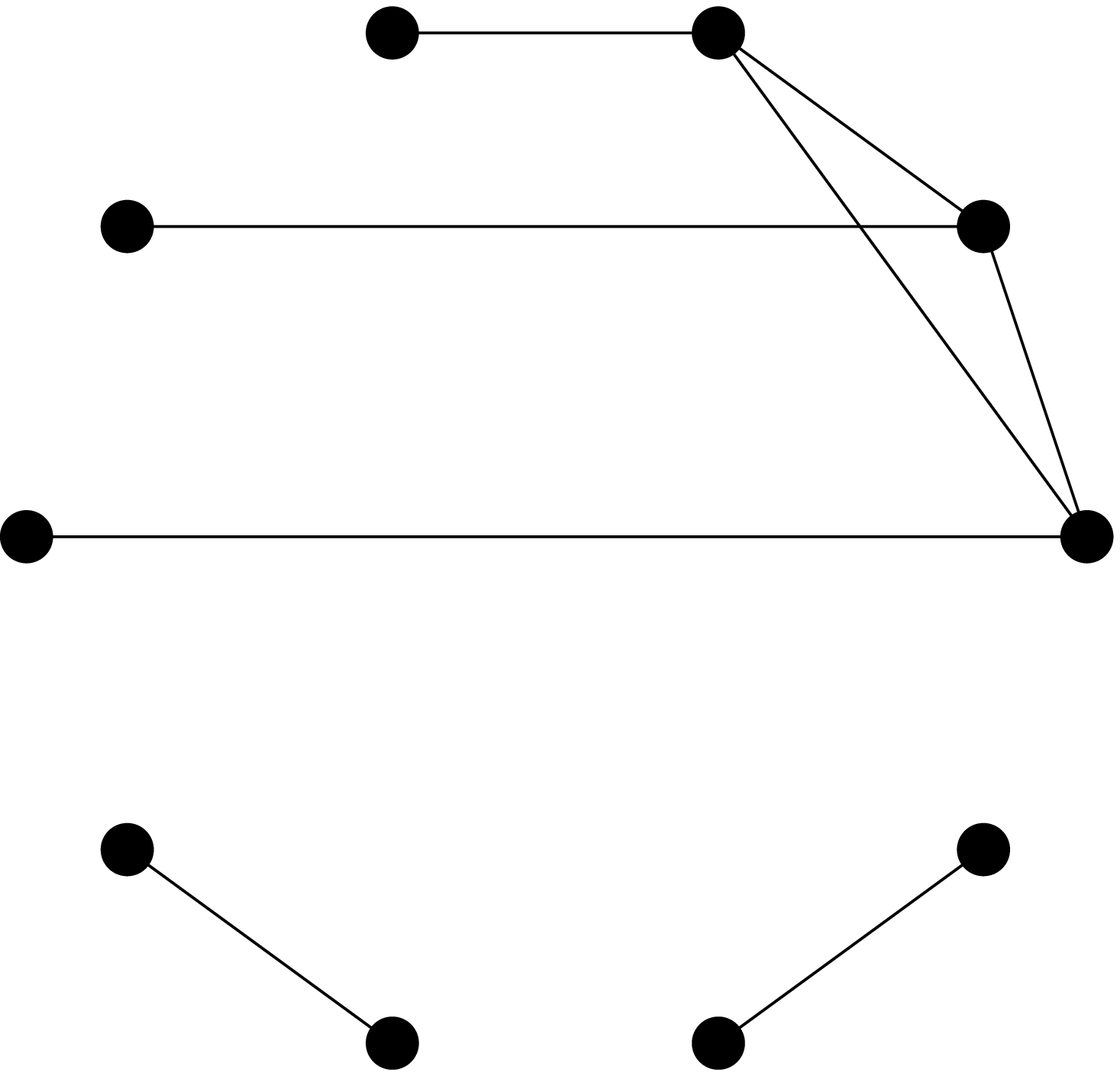}
\end{picture}\par
\end{minipage}

\bigskip
and contains one of the graphs:

\setlength{\unitlength}{1cm}
\begin{minipage}[t]{2.2cm}
\begin{picture}(1.4,1.8)
\leavevmode
\epsfxsize=1.4cm
\epsffile{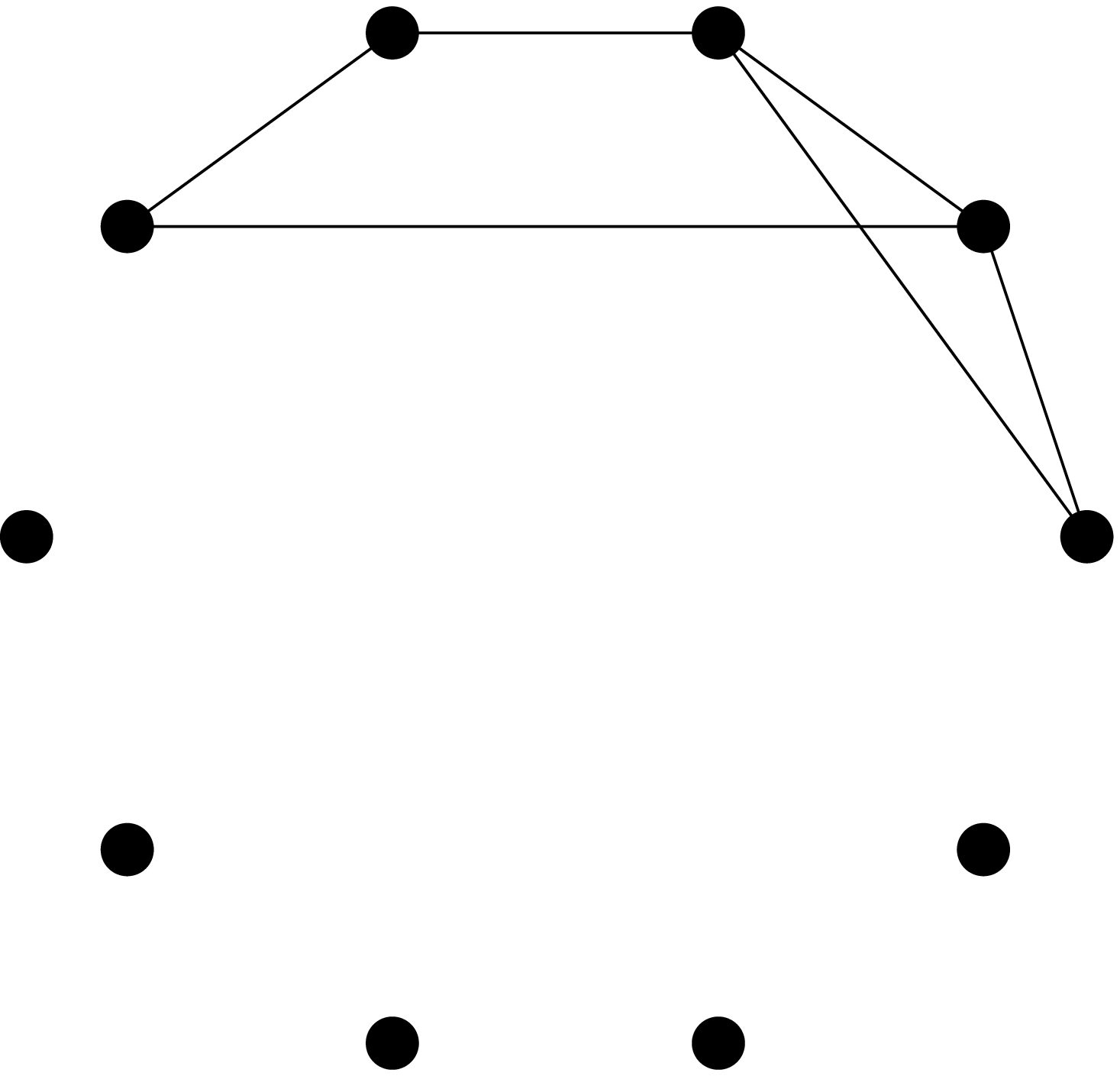}
\end{picture}\par
\end{minipage}
\begin{minipage}[t]{2.2cm}
\begin{picture}(1.4,1.8)
\leavevmode
\epsfxsize=1.4cm
\epsffile{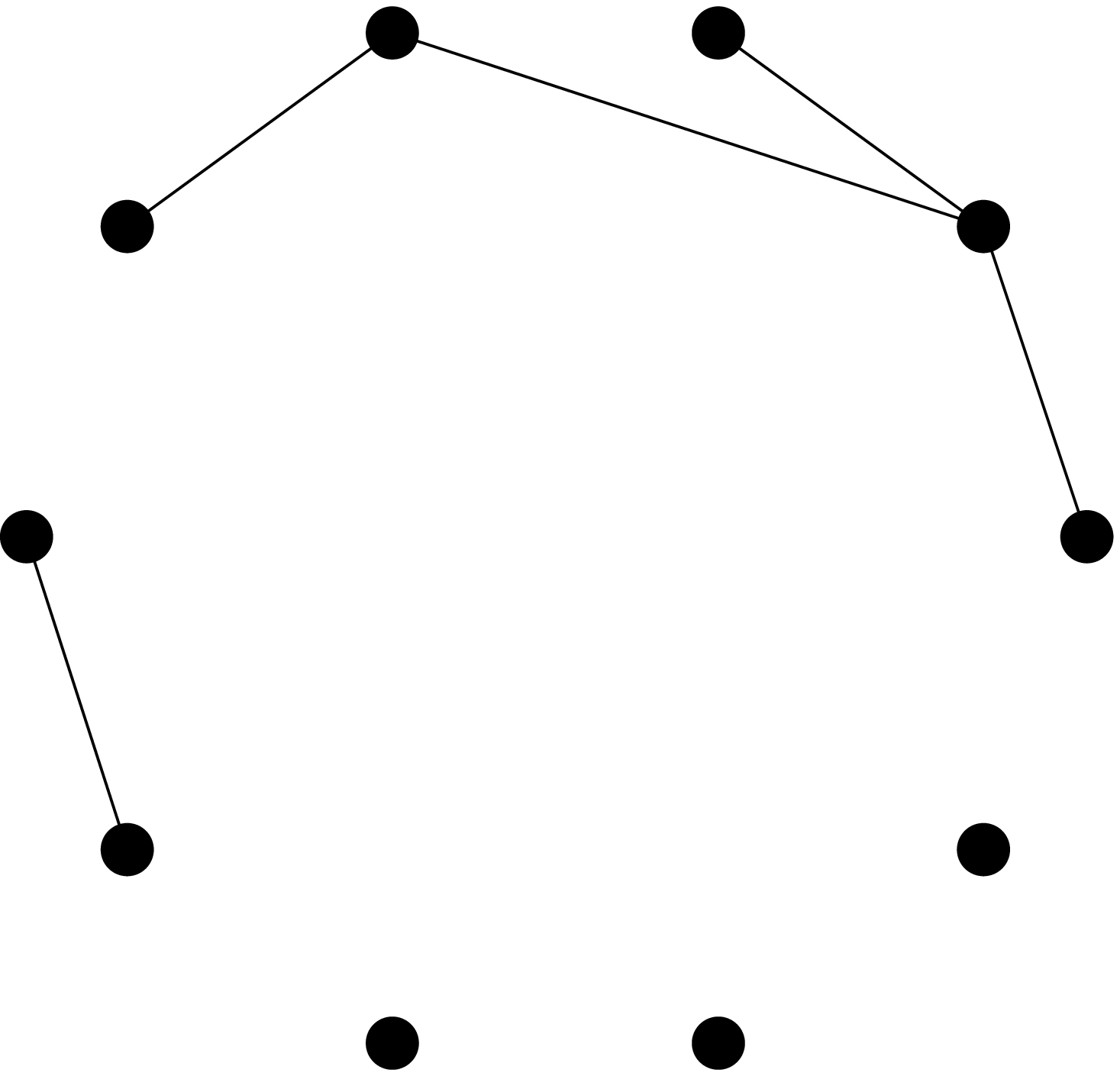}
\end{picture}\par
\end{minipage}
\begin{minipage}[t]{2.2cm}
\begin{picture}(1.4,1.8)
\leavevmode
\epsfxsize=1.4cm
\epsffile{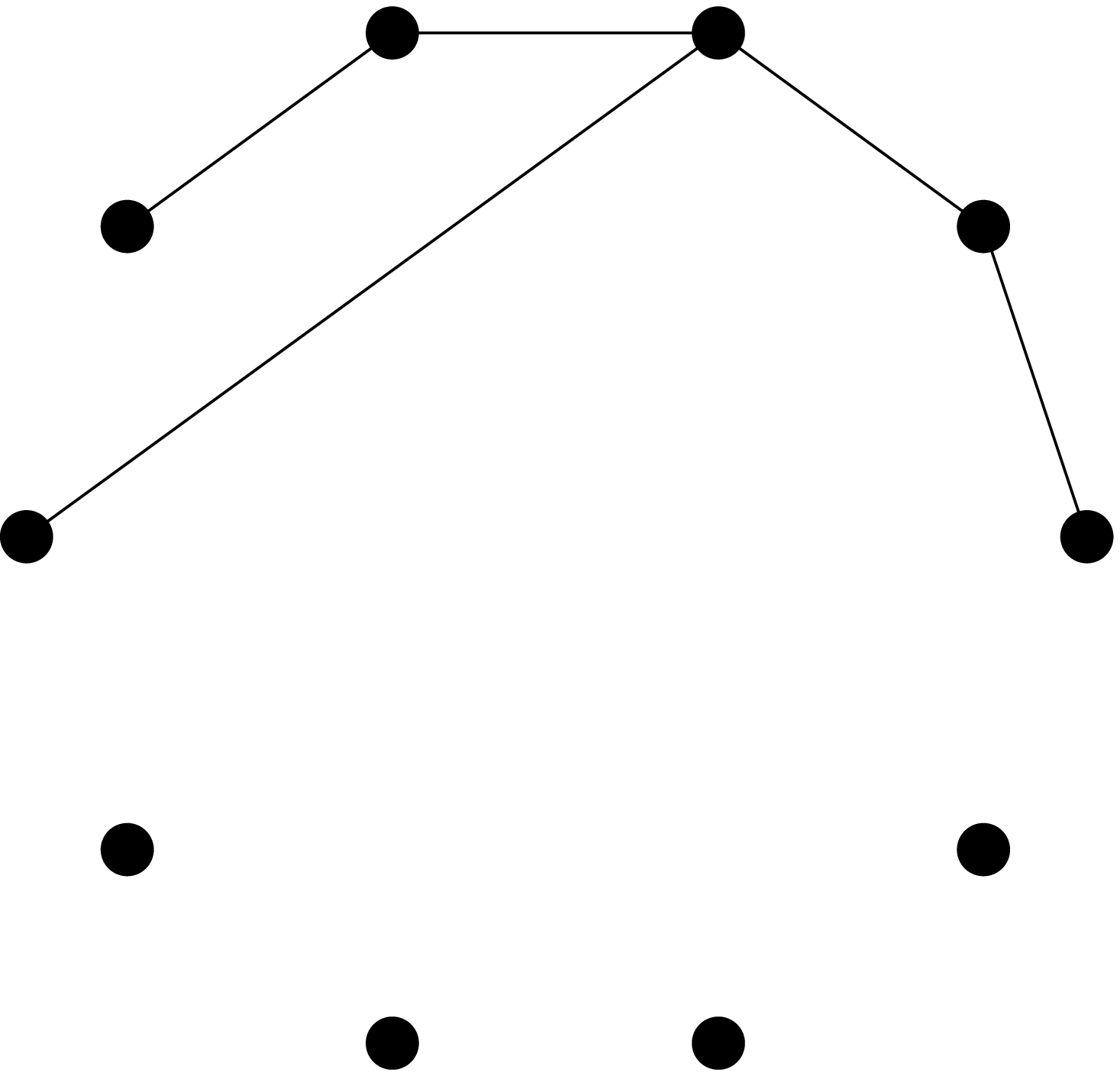}
\end{picture}\par
\end{minipage}
\begin{minipage}[t]{2.2cm}
\begin{picture}(1.4,1.8)
\leavevmode
\epsfxsize=1.4cm
\epsffile{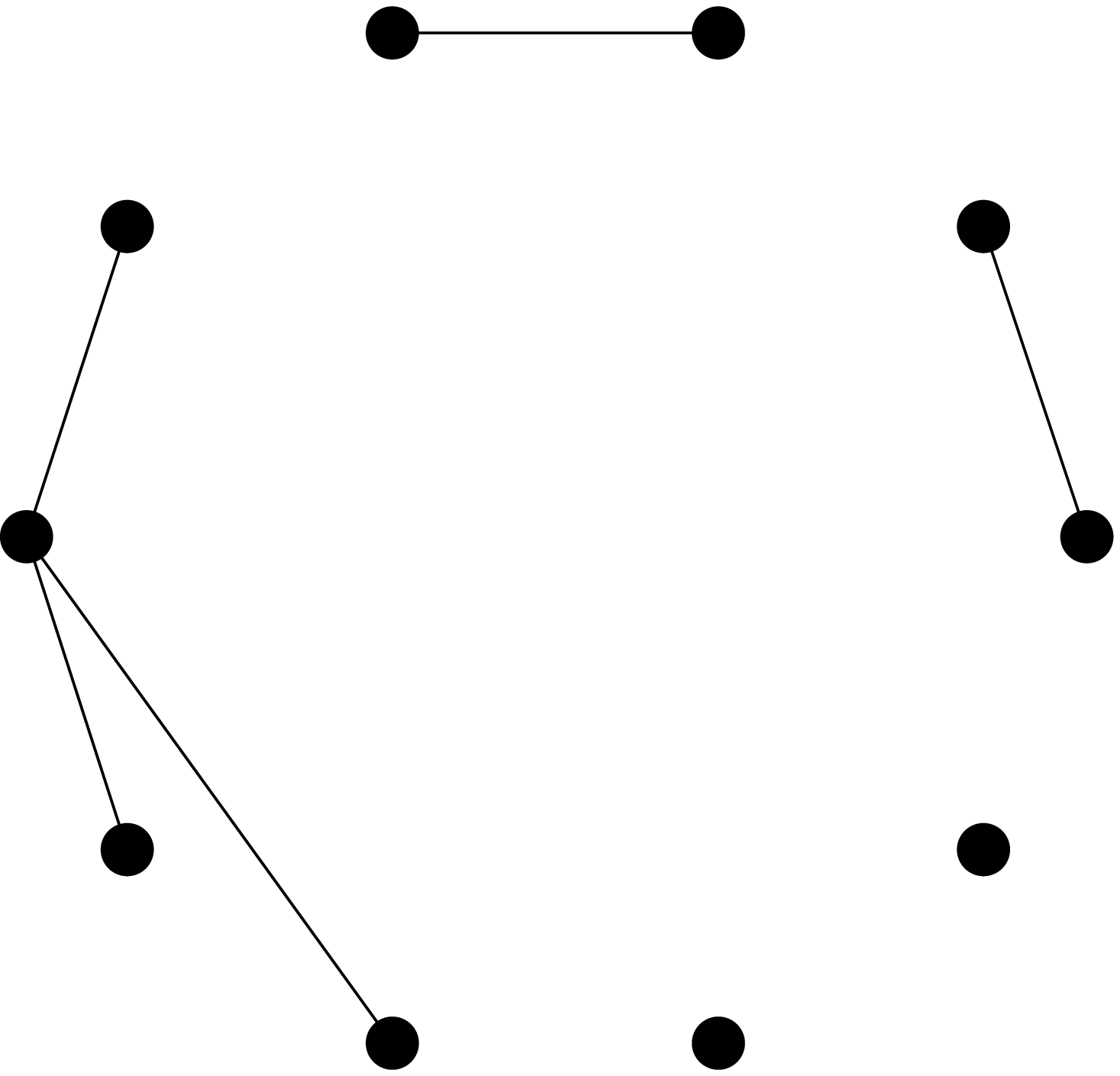}
\end{picture}\par
\end{minipage}

\bigskip
\hrule
\bigskip

The graphs with Ramsey number $R(K_3,H) < 27$ can be obtained from~\cite{ramseynumber-site}.

\bibliographystyle{plain}
\bibliography{ramsey-references.bib}

\begin{thebibliography}{10}

\bibitem{brandt_98}
S.~Brandt, G.~Brinkmann, and T.~Harmuth.
\newblock {All Ramsey numbers $r({K_3, G})$ for connected graphs of order 9}.
\newblock {\em Electronic Journal of Combinatorics}, 5, 1998.

\bibitem{brandt_00}
S.~Brandt, G.~Brinkmann, and T.~Harmuth.
\newblock The generation of maximal triangle-free graphs.
\newblock {\em Graphs and Combinatorics}, 16(2):149--157, 2000.

\bibitem{brinkmann_98}
G.~Brinkmann.
\newblock {All Ramsey numbers $r(K_3,G)$ for connected graphs of order 7 and
  8}.
\newblock {\em Combinatorics, Probablility and Computing}, 7(2):129--140, 1998.

\bibitem{HOG}
G.~Brinkmann, K.~Coolsaet, J.~Goedgebeur, and H.~M{\'e}lot.
\newblock {House of Graphs: a database of interesting graphs}.
\newblock {\em Discrete Applied Mathematics}, 161(1-2):311--314, 2013.
\newblock Available at \url{http://hog.grinvin.org/}.

\bibitem{alphadef}
G.~Brinkmann, S.~Crevals, H.~M{\'e}lot, Rylands L., and Steffen E.
\newblock $alpha$-labelings and the structure of trees with nonzero
  $\alpha$-deficit.
\newblock {\em Discrete Mathematics and Theoretical Computer Science},
  14(1):159--174, 2012.

\bibitem{triangleramsey-site}
G.~Brinkmann and J.~Goedgebeur.
\newblock Homepage of triangleramsey:
  \url{http://caagt.ugent.be/triangleramsey/}.

\bibitem{ramseynumber-site}
G.~Brinkmann and J.~Goedgebeur.
\newblock House of Graphs: Ramsey numbers: \url{http://hog.grinvin.org/Ramsey}.

\bibitem{faudree_80}
R.J. Faudree, C.C. Rousseau, and R.H. Schelp.
\newblock {All triangle-graph Ramsey numbers for connected graphs of order
  six}.
\newblock {\em Journal of Graph Theory}, 4(3):293--300, 1980.

\bibitem{staszek_12}
J.~Goedgebeur and S.P. Radziszowski.
\newblock {New computational upper bounds for Ramsey numbers $R(3,k)$}.
\newblock to appear in the Electronic Journal of Combinatorics.

\bibitem{ramsey-data-site}
B.D. McKay.
\newblock Combinatorial Data (Ramsey graphs):
  \url{http://cs.anu.edu.au/~bdm/data/ramsey.html}.

\bibitem{mckay_81}
B.D. McKay.
\newblock Practical graph isomorphism.
\newblock {\em 10th. Manitoba Conference on Numerical Mathematics and Computing
  (Winnipeg, 1980)}, 30:45--87, 1981.

\bibitem{mckay_98}
B.D. McKay.
\newblock Isomorph-free exhaustive generation.
\newblock {\em Journal of Algorithms}, 26(2):306--324, 1998.

\bibitem{mckay_radz_95}
B.D. McKay and S.P. Radziszowski.
\newblock {R(4,5)=25}.
\newblock {\em Journal of Graph Theory}, 19(3):309--322, 1995.

\bibitem{mckay_92}
B.D. McKay and K.M. Zhang.
\newblock {The value of the Ramsey number $R(3, 8)$}.
\newblock {\em Journal of Graph Theory}, 16(1):99--105, 1992.

\bibitem{staszek_ds}
S.P. Radziszowski.
\newblock {Small Ramsey Numbers}.
\newblock {\em Electronic Journal of Combinatorics}, 1994-2011.
\newblock Dynamic Survey 1, revision 13, \url{http://www.combinatorics.org/}.

\bibitem{ramsey_30}
F.P. Ramsey.
\newblock On a problem of formal logic.
\newblock {\em Proceedings of the London Mathematical Society}, 30:264--286,
  1930.

\bibitem{schelten_97}
A.~Schelten and I.~Schiermeyer.
\newblock {Ramsey numbers $r(K_3, G)$ for connected graphs $G$ of order seven}.
\newblock {\em Discrete Applied Mathematics}, 79(1–3):189--200, 1997.

\bibitem{schelten_98}
A.~Schelten and I.~Schiermeyer.
\newblock {Ramsey numbers $r(K_3, G)$ for $G \cong K_7 - 2P_2$ and $G \cong K_7
  - 3P_2$}.
\newblock {\em Discrete Mathematics}, 191(1–3):191--196, 1998.

\bibitem{xia_93}
J.~Xia.
\newblock {Ramsey numbers involving a triangle: theory and algorithms}.
\newblock Master's thesis, Rochester Institue of Technology, 1993.

\end{thebibliography}

\end{document}